\newcommand{\btk}{\begin{tikzcd}}
\newcommand{\etk}{\end{tikzcd}}
\newenvironment{tz}{\begin{center}\begin{tikzpicture}}{\end{tikzpicture}\end{center}}
\tikzstyle{d}=[double distance=.3ex]
\tikzstyle{w}=[preaction={draw=white,-,line width=5pt}]
\newcommand{\@bbify}[1]{
  \ifcsname b#1\endcsname
  \message{WARNING: Overwriting b#1 with blackboard letter!}
  \fi
  \expandafter\edef\csname b#1\endcsname
  {\noexpand\ensuremath{\noexpand\mathbb #1}\noexpand\xspace}}
\newcommand{\@calify}[1]{
  \ifcsname c#1\endcsname
  \message{WARNING: Overwriting c#1 with calligraphic letter!}
  \fi 
  \expandafter\edef\csname c#1\endcsname
  {\noexpand\ensuremath{\noexpand\mathcal #1}\noexpand\xspace}}
\newcommand{\@bfify}[1]{
  \ifcsname bf#1\endcsname
  \message{WARNING: Overwriting c#1 with bold letter!}
  \fi
  \expandafter\edef\csname bf#1\endcsname
  {\noexpand\ensuremath{\noexpand\mathbf #1}\noexpand\xspace}}
\newcounter{@letter}\stepcounter{@letter}
\loop\@bbify{\Alph{@letter}}\@calify{\Alph{@letter}}\@bfify{\Alph{@letter}}
\tikzset{%
node distance=1.5cm, la/.style={scale=0.8}, lasmall/.style={scale=0.75}, over/.style={auto=false,fill=white,inner sep=1.5pt, minimum size=0, outer sep=0},
    symbol/.style={%
        draw=none,
        every to/.append style={%
            edge node={node [sloped, allow upside down, auto=false]{$#1$}}},
            
    }, pro/.style={postaction={decorate,decoration={
        markings,
        mark=at position .5 with {\node at (0,0) {$\bullet$};}
      }},
      inner sep=.9ex,
      },
      prosmall/.style={postaction={decorate,decoration={
        markings,
        mark=at position .5 with {\node at (0,0) {$\scriptstyle \bullet$};}
      }},
      inner sep=.9ex,
      },
  n/.style={double equal sign distance, -implies}, t/.style={double distance=2.5pt, -implies, postaction={draw,-}},
}
\newcommand{\arrowdot}{
\ensuremath{\begin{tikzpicture}
\node (A) at (0,-.4) {};
\node (B) at (.4,-.4) {};
\draw[->, line width=.1ex] (0,-.6) -- (.4,-.6);
\node[shape=circle, fill=black, scale=0.35] (A) at  (.17,-.6) {};
\end{tikzpicture}
}}
\newcommand{\Arrowdot}{
\ensuremath{\begin{tikzpicture}
\node (A) at (0,-.4) {};
\node (B) at (.4,-.4) {};
\draw[n, line width=.1ex] (0,-.6) -- (.4,-.6);
\node[shape=circle, fill=black, scale=0.35] (A) at  (.17,-.6) {};
\end{tikzpicture}
}}
\def\makeslashed#1#2#3#4#5{#1{\mathpalette{\sla@{#2}{#3}{#4}}{#5}}}
\def\@mathlower#1#2#3{\setbox0=\hbox{$\m@th#2#3$}\lower#1\ht0\box0}
\def\mathlower#1#2{\mathpalette{\@mathlower{#1}}{#2}}
\newcommand\dhxrightarrow[2][]{%
\mathrel{\ooalign{$\xrightarrow[#1\mkern4mu]{#2\mkern4mu}$\cr%
\hidewidth$\rightarrow\mkern4mu$}}
}
\newcommand\tailxrightarrow[2][]{%
\mathrel{\ooalign{$\xrightarrow[#1\mkern4mu]{#2\mkern4mu}$\cr%
\hidewidth$\Yright\mkern14mu$}}
}
\newcommand{\fto}{\twoheadrightarrow}
\newcommand{\cto}{\rightarrowtail}
\newcommand{\wto}{\xrightarrow{{\smash{\mathlower{0.8}{\sim}}}}}
\newcommand{\cwto}{\tailxrightarrow{{\smash{\mathlower{0.8}{\sim}}}}}
\newcommand{\fwto}{\dhxrightarrow{{\smash{\mathlower{0.8}{\sim}}}}}
\newcommand{\Set}{\mathrm{Set}}
\newcommand{\folds}{\mathrm{\textbf{FOLDS}}}
\newcommand{\Fib}{\mathrm{Fib}}
\newcommand{\Cof}{\mathrm{Cof}}
\newcommand{\an}{\ensuremath{\mathrm{An}}}
\newcommand{\trfib}{\ensuremath{\mathrm{TrFib}}}
\newcommand{\cof}{\ensuremath{\mathrm{cof}}}
\newcommand{\nfib}{\ensuremath{\mathrm{NFib}}}
\newcommand{\mono}{\ensuremath{\mathrm{mono}}}
\newcommand{\epi}{\ensuremath{\mathrm{epi}}}
\newcommand{\Lan}{\mathrm{Lan}}
\newcommand{\var}{\mathrm{var}}
\newcommand{\hor}{\mathrm{hor}}
\newcommand{\ver}{\mathrm{ver}}
\newcommand{\shor}{\mathrm{shor}}
\newcommand{\sver}{\mathrm{sver}}
\renewcommand{\deg}{\mathrm{deg}}
\newcommand{\fibrant}{\mathrm{fib}}
\newcommand{\Path}{\mathrm{Path}}
\newcommand{\id}{\mathrm{id}}
\newcommand{\op}{\mathrm{op}}
\newcommand{\hop}{\mathrm{hop}}
\newcommand{\Sq}{\mathrm{Sq}}
\newcommand{\ps}{\mathrm{ps}}
\renewcommand{\cE}{\mathcal{E}}
\newcommand{\C}{\mathcal{C}}
\newcommand{\I}{\mathcal{I}}
\newcommand{\J}{\mathcal{J}}
\renewcommand{\cM}{\mathcal{M}}
\renewcommand{\cR}{\mathcal{R}}
\newcommand{\Wf}{\mathcal{W}_f}
\newcommand{\W}{\mathcal{W}}
\newcommand{\dblcat}{\mathrm{DblCat}}
\newcommand{\twocat}{2\mathrm{Cat}}
\newcommand{\cat}{\mathrm{Cat}}
\renewcommand{\cL}{\mathcal{L}}
\newcommand{\Eadj}{E_\mathrm{adj}}
\newcommand{\verteq}{\rotatebox{90}{$\,=$}}
\DeclareFontFamily{U}{dmjhira}{}
\DeclareFontShape{U}{dmjhira}{m}{n}{ <-> dmjhira }{}
\DeclareRobustCommand{\yo}{\text{\usefont{U}{dmjhira}{m}{n}\symbol{"48}}}
\newcommand{\sq}[5]{{#1}\colon({#4} \; ^{{#2}}_{\substack{{#3}}} \; {#5})}
\newlist{rome}{enumerate}{7}
\setlist[rome]{label=(\roman*)}
\newtheorem{theorem}{Theorem}[section]
\newtheorem{cor}[theorem]{Corollary}
\newtheorem{prop}[theorem]{Proposition}
\newtheorem{lem}[theorem]{Lemma}
\declaretheorem[name=Theorem,numbered=yes]{theoremA}
\theoremstyle{definition}
\newtheorem{defn}[theorem]{Definition}
\newtheorem{terminology}[theorem]{Terminology}
\newtheorem{ex}[theorem]{Example}
\newtheorem{notation}[theorem]{Notation}
\newtheorem{constr}[theorem]{Construction}
\theoremstyle{remark}
\newtheorem{rem}[theorem]{Remark}
\crefname{theorem}{Theorem}{Theorems}
\crefname{cor}{Corollary}{Corollaries}
\crefname{prop}{Proposition}{Propositions}
\crefname{lem}{Lemma}{Lemmas}
\crefname{defn}{Definition}{Definitions}
\crefname{terminology}{Terminology}{Terminologies}
\crefname{ex}{Example}{Examples}
\crefname{notation}{Notation}{Notations}
\crefname{descr}{Description}{Descriptions}
\crefname{constr}{Construction}{Constructions}
\crefname{rem}{Remark}{Remarks}
\let\c@equation\c@theorem
\numberwithin{equation}{section}
\renewcommand\thepart{\Roman{part}.}
\renewcommand\part{%
  \par
  \addvspace{4ex}%
  \@afterindenttrue
  \secdef\@part\@spart
}
\def\@part[#1]#2{%
    \ifnum \c@secnumdepth >\m@ne
      \refstepcounter{part}%
      
      \addcontentsline{toc}{section}{\hspace{-.5cm} \bfseries\thepart\hspace{1em}#1}%
    \else
      \addcontentsline{toc}{section}{#1}%
    \fi
    {\parindent \z@ \raggedright
     \interlinepenalty \@M
     \normalfont
     \thispagestyle{empty}
     \ifnum \c@secnumdepth >\m@ne
      \centering\large\textsc{\textbf{\thepart}}\nobreakspace
     \fi
     \centering\large\textsc{\textbf{#2}}
     \par}%
    \nobreak
    \vskip .3cm
    \@afterheading}
\def\@spart#1{%
      \addcontentsline{toc}{part}{#1}%
    {\parindent \z@ \raggedright
     \interlinepenalty \@M
     \normalfont
     \thispagestyle{plain}
     \centering\large\textsc{\textbf{#1}}
     \par}%
    \nobreak
    \vskip .3cm
    \@afterheading}
\title{On the equivalence invariance of formal category theory}
\author[P.\ Verdugo]{Paula Verdugo}
\address{Max-Planck-Institut für Mathematik, 53072 Bonn, Germany}
\email{verdugo@mpim-bonn.mpg.de}
\begin{document}

\setcounter{tocdepth}{1}

\begin{abstract}
We prove a result of equivalence invariance of formal category theory for statements that can be expressed within an equipment. To do this, we exploit Henry and Bardomiano Mart\'inez's link between Makkai's FOLDS (first order logic with dependent sorts) and abstract homotopy theory. In the process, we construct a model structure on the category $\dblcat$ of double categories and double functors, whose trivial fibrations are the double functors that are surjective on objects, full on horizontal and vertical morphisms, and fully faithful on squares, and whose fibrant objects are the equipments.
\end{abstract}
\maketitle

\tableofcontents


\section{Introduction}

Category theory abstracts the main aspects of constructions that we repeat in different contexts, to provide a framework where we can do them independently of specific information from particular instances. That abstraction lends itself to the same phenomena.  Nowadays, we are interested in different types of category theory, namely, ordinary category theory, enriched category theory, internal category theory, etc., where we want to have analogous kinds of notions\textemdash limits, adjunctions, Kan extensions, etc.\textemdash and prove the same kind of results\textemdash universal properties, monadicity theorems, etc. \emph{Formal category theory} seeks to identify and encode the essential elements of these constructions and proofs in order to provide a unified framework for them.

Since many of these notions can be defined in terms of categories, functors, and natural transformations, one might think that a $2$-category may be a sufficient framework for formal category theory. However, to capture universal properties of categorical notions, one needs an ``extra direction'' of arrows, and thus a more robust framework for formal category theory is a double category with a particular property known as an \emph{equipment}. Early on, in  \cite{CarboniKellyWood} Carboni, Kelly, and Wood explore incipient ideas of using squares relating relations and functions in equipment-like structures to advance on aspects of formal category theory. Verity takes this further in \cite{Verity} by using double categories and thus providing a more robust framework to study formal category theory; later Cruttwell and Shulman have also worked on this \cite{Shulman_Cruttwell2010,Shulman_equipments}.

A compelling case of the importance of this approach is made in recent work by Riehl and Verity \cite{elements}. They develop a significant part of (among other structures) $(\infty,1)$-category theory by means of studying their homotopy $2$-category. However, it turns out that some concepts cannot be formulated in such $2$-category and they use instead the notion of virtual equipment. Moreover, in future work they will show that the virtual equipment associated to the $\infty$-cosmoi of $(\infty,1)$-categories is actually a  (pseudo)equipment.

An \emph{equipment} is a double categorical framework for formal category theory in that 
\begin{enumerate}
    \item definitions of basic categorical concepts such as adjunctions or Kan extensions can be defined internally to any equipment, and
    \item expected theorems relating these concepts can be proven.
\end{enumerate}

The idea is that the objects of the equipment can be thought of as ``categories'' in some abstract sense, the vertical morphisms can be thought of as ``functors,'' the horizontal morphisms can be thought of as ``profunctors,'' and the squares can be thought of as ``natural transformations.'' A natural question to ask is whether two equipments that are ``equivalent'' in a suitable sense define the ``same formal category theory.'' The main result of this paper gives a precise meaning to both of these terms and shows that this does in fact hold.

\subsection*{First order language with dependent sorts}

The question about what mathematical statements are included in the phrase ``same formal category theory'' is a subtle one. Famously, not all mathematical statements about $1$-categories are invariant under equivalence; a well-known example of this is the statement ``this category has a single object''. Similarly, not all mathematical statements about $2$-categories are invariant under biequivalence; an example of this is the statement ``this $2$-category has a $2$-terminal object''\textemdash we can prove that any biequivalent $2$-category will have a \emph{biterminal} object but not necessarily $2$-terminal. Experts will recognize that these examples implicitly involve equality between objects in the first case, and between ($1$)-morphisms in the latter.

In each of the settings above, Makkai introduced a formal language in which it is not possible to express statements of that kind. That is, all statements in Makkai's formal language for $1$-category theory are invariant under equivalence of $1$-categories, while all statements in Makkai's formal language for $2$-category theory are invariant under equivalence of $2$-categories. Makkai's formal framework is called First Order Logic with Dependent Sorts (FOLDS) and is defined in reference to a \emph{signature} category, which can specialize to the case of $1$- and $2$-category theory \cite{makkai}.

In related work, Riehl and Verity apply Makkai's FOLDS to develop a formal language for \emph{virtual} equipments and establish the equivalence invariance of formal categorical results in that related framework \cite{elements}, which they use to establish the model independence of $(\infty,1)$-category theory. It would be interesting to explore whether one can also apply the methods described next to this case.

\subsection*{Quillen model structures}

Another formalism for equivalence-invariant mathematics are Quillen model structures \cite{Quillen}. The idea in this context is that additional structure borne by a $1$-category can be used to guarantee that certain constructions involving its objects are equivalence invariant. Part of the structure is a class of ``weak equivalences'' which are meant to codify the appropriate notion of equivalence in a given framework.

As a consequence, whenever we are in front of a reasonable notion of ``sameness'' in a category, it is natural to ask whether we can find a model structure that allows us to study the homotopy theory captured by such a class, i.e.\ a model structure whose weak equivalences are exactly such a class. Notably, there is a canonical model structure on $\cat$ whose weak equivalences are precisely the equivalences of categories, as well as a canonical model structure on $\twocat$ whose weak equivalences are the biequivalences between $2$-categories. With the previous discussion in mind, these remarks raise two natural questions. Namely,
\begin{enumerate}
    \item What is the ``correct'' notion of equivalence for double categories, or more importantly, between equipments?
    \item Is there a model structure on the category $\dblcat$ of double categories and double functors whose weak equivalences are the double functors that answer the first question?
\end{enumerate}

In terms of question (1), we want to use \emph{double biequivalences}, which are a $2$-categorical analogue of notions of equivalences between double categories that are already present in the literature; see \cref{defn:doublebieq}. For (2), we know that double biequivalences have been used as weak equivalences, or closely related to weak equivalences \cite{MSV,whi}. However, with the foresight of equipments playing a fundamental role, we are after a model structure structure whose \emph{fibrant objects}\textemdash which are especially well-behaved objects in a model structure\textemdash are exactly the equipments. In this paper we answer that question on the affirmative, for now the reader may ignore the parenthesis in the statement below; a precise statement is found in \cref{MS_equipments}. 

\begin{theoremA}\label{intro_MS_equipments} There is a model structure on $\dblcat$ whose fibrant objects are the equipments (and whose cofibrations are the $\cI$-cofibrations of \cref{def:cofib}). Moreover, the weak equivalences between equipments are the double biequivalences.
\end{theoremA}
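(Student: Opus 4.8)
The plan is to exhibit the desired structure as a cofibrantly generated model structure on $\dblcat$, using the explicit description of the trivial fibrations to pin down the generating cofibrations $\cI$, and Henry and Bardomiano Mart\'inez's FOLDS machinery both to identify the class $\W$ of weak equivalences and to handle the acyclicity condition. First I would record double categories as models of a FOLDS signature $\cL$: an inverse category whose sorts are objects, horizontal morphisms, vertical morphisms, and squares, with the evident dependency arrows (the two horizontal and two vertical boundaries of a square, the sources and targets of each kind of morphism, and the degeneracy arrows). This gives a nerve-type comparison between $\dblcat$ and the presheaf category $\widehat{\cL}$, while composition and units are carried by the genuine algebraic structure of $\dblcat$. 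The set $\cI$ is then taken to be the boundary inclusions $\partial[s]\hookrightarrow[s]$, one for each sort $s$, transported to $\dblcat$; unwinding the right lifting property against $\cI$ shows at once that the $\cI$-injective maps are exactly the double functors that are surjective on objects, full on horizontal and on vertical morphisms, and fully faithful on squares, as the statement demands. Consequently $\cof(\cI)$ is the class of cofibrations of \cref{def:cofib}.

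To promote these cofibrations to a model structure I would appeal to Jeff Smith's recognition theorem. Since $\dblcat$ is locally presentable, it remains to supply an accessible, retract-closed class $\W$ satisfying the two-out-of-three property, together with the inclusion $\cI\text{-inj}\subseteq\W$ and the acyclicity condition that $\cof(\cI)\cap\W$ is closed under pushout and transfinite composition. For $\W$ I would take the FOLDS-equivalences associated to $\cL$; accessibility and two-out-of-three are formal consequences of their fibered, levelwise definition, and $\cI\text{-inj}\subseteq\W$ amounts to checking that a double functor surjective on objects, full on morphisms and fully faithful on squares is in particular a FOLDS-equivalence. The real work is the acyclicity condition, and this is exactly where the FOLDS framework intervenes: it supplies an explicit set $\cJ$ of anodyne generators with $\cof(\cJ)=\cof(\cI)\cap\W$, from which the required closure properties follow automatically and the fibrations are recognized as $\cJ\text{-inj}$.

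The main obstacle is the identification of the fibrant objects, i.e.\ the $\cJ$-injective objects, with the equipments. Recall that a double category is an equipment precisely when every niche\textemdash a vertical morphism together with a horizontal morphism sharing a corner\textemdash admits a cartesian filler, equivalently when every vertical morphism has a companion and a conjoint. I would therefore design $\cJ$ to contain, alongside the anodyne maps aligning fibrancy with the FOLDS notion of equivalence (lifting of horizontal and vertical equivalences), the inclusions of a niche into the niche equipped with its cartesian square. Right lifting against the latter then forces exactly the existence of companions and conjoints. Proving the equivalence in both directions is delicate: one direction reads off companions and conjoints from chosen lifts, while the converse must show that an equipment admits a (generally non-unique) lift against every generator in $\cJ$, which is where the universal property of cartesian cells is used to produce and glue the required fillers.

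Finally, to match the weak equivalences between fibrant objects with the double biequivalences of \cref{defn:doublebieq}, I would use the general principle that in a FOLDS-type model structure a map between fibrant objects is a weak equivalence if and only if it is a levelwise fibered equivalence in Makkai's sense. Spelling this out over $\cL$ recovers precisely the conditions defining a double biequivalence\textemdash essential surjectivity on objects, the appropriate up-to-equivalence fullness on horizontal and vertical morphisms, and full faithfulness on squares. A fibrant-replacement argument, using that fibrant replacement can be taken to preserve the relevant levelwise data, then reduces the statement about arbitrary weak equivalences between equipments to this levelwise characterization, completing the proof.
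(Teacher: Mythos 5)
Your overall architecture (a cofibrantly generated structure, with $\cI$ read off from boundary inclusions of a FOLDS signature for double categories and fibrant objects forced to be equipments by lifting) is close in spirit to the paper, and your observation that the generating cofibrations arise as latching maps of the FOLDS diagram is exactly \cref{lem:surviving_latching_maps}. However, the acyclicity step is asserted rather than proved, and this is the heart of the matter. You claim the FOLDS framework ``supplies an explicit set $\cJ$ with $\cof(\cJ)=\cof(\cI)\cap\W$,'' but no such mechanism exists: the Henry--Bardomiano machinery runs in the opposite direction (it takes a model structure satisfying Condition $\bigtriangleup$ as \emph{input} and deduces $\cL$-equivalence invariance of weak equivalences between fibrant objects; it does not manufacture generating trivial cofibrations). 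For the natural candidate $\cJ$ of \cref{def:anodyne}, $\cof(\cJ)$ is strictly contained in the trivial cofibrations\textemdash only the fibrations \emph{between fibrant objects} are the $\cJ$-injectives\textemdash and Smith's theorem yields a generating set for $\cof(\cI)\cap\W$ only \emph{after} acyclicity is verified, so invoking its existence here is circular. Relatedly, taking $\W$ to be ``the FOLDS-equivalences'' is not a well-posed class of morphisms with 2-out-of-3 and accessibility ``for formal reasons,'' and it does not visibly agree with the class the theorem needs: in the paper a map is a weak equivalence iff its naive fibrant replacement (freely adjoining companions and conjoints) is a double biequivalence, and the substantive work\textemdash that trivial fibrations survive this replacement (\cref{prop:trivfibareweSq}), the path object for equipments (\cref{prop:pathobject}), and $\nfib\cap\Wf\subseteq\cI^\boxslash$ (\cref{prop:nfib_and_biequiv_is_trivfib})\textemdash is precisely what your proposal skips before feeding everything into \cref{thm:fibrantly_generated}.

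A second concrete problem is your choice of generators for fibrancy. The inclusion of a niche into ``the niche equipped with its cartesian square'' cannot force cartesianness by a right lifting property: lifting produces \emph{some} filler, while the universal property of a cartesian cell quantifies over factorizations of all other squares and is not an existence condition against a single map. The paper sidesteps this by lifting against the free double categories on a companion pair and on a conjoint pair, $\Sq\mathbbm{2}$ and $\Sq\mathbbm{2}^\hop$ (\cref{lem:Sq2_free}, \cref{free_conjoint}), whose defining pasting equalities hold already in the generating object and hence are genuinely imposed by lifting (\cref{companionslifting}, \cref{conjointslifting}). Finally, the ``general principle'' that weak equivalences between fibrant objects in a FOLDS-type model structure coincide with levelwise fibered equivalences is not established anywhere; in the paper the identification with double biequivalences between equipments is an \emph{input} to the construction (the class $\Wf$ is chosen to be the double biequivalences, with 2-out-of-6 and accessibility imported from \cite{MSV}), not an output of the FOLDS formalism.
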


Given the preeminence of equipments in the literature of formal category theory, this result should be of independent interest.

\subsection*{Equivalence invariance}
In forthcoming work, Henry and Bardomiano Mart\'inez show that these settings can be related, explaining some of the technical results of Makkai's FOLDS \cite{HenryBardomiano}. Following their ideas, we prove that if the FOLDS signature is connected to a model structure in a particular way (see \cref{sec:homotopy_meets_folds}), then a weak equivalence between fibrant objects preserves the truth of any mathematical statement that is expressible in the FOLDS language. 

Applying this, we achieve the objectives outlined above:
\begin{enumerate}
    \item giving precise meaning to a ``language for formal category theory,'' and
    \item proving that double biequivalent equipments have the same formal category theory.
\end{enumerate}

We verify that our model structure of \cref{intro_MS_equipments} is connected to the FOLDS signature for double categories in the required way, which allows us to deduce the desired result which we prove in a precise way in \cref{double_equiv_same_formulae} but paraphrase as follows.

\begin{theoremA} If $F\colon\bA\to\bB$ is double biequivalence between equipments, then it preserves the truth of any mathematical statement that is expressible in the FOLDS language.
\end{theoremA}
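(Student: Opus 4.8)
The plan is to obtain this theorem as a direct application of the general equivalence-invariance principle of \cref{sec:homotopy_meets_folds} to the specific model structure produced in \cref{intro_MS_equipments}. Recall that the principle reads: provided a FOLDS signature has been connected to a model structure in the prescribed manner, any weak equivalence between \emph{fibrant} objects preserves the truth of every FOLDS-expressible statement. Thus the work splits into two parts: first, assembling the FOLDS signature for double categories together with the required connection to the model structure on $\dblcat$; and second, recognising that a double biequivalence between equipments is exactly a weak equivalence between fibrant objects in that model structure.

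The second part is immediate from the results already quoted. By \cref{intro_MS_equipments} the fibrant objects of the model structure on $\dblcat$ are precisely the equipments, so $\bA$ and $\bB$ are both fibrant; and the same theorem identifies the weak equivalences \emph{between equipments} with the double biequivalences. Hence the hypothesis that $F\colon\bA\to\bB$ is a double biequivalence is literally the assertion that $F$ is a weak equivalence between two fibrant objects. Granting the connection hypothesis, the general principle then applies verbatim and delivers the conclusion; the precise formulation is the one recorded in \cref{double_equiv_same_formulae}.

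Consequently the entire content of the proof\textemdash and its main obstacle\textemdash is the first part: verifying that the model structure of \cref{intro_MS_equipments} is connected to the FOLDS signature for double categories in the sense demanded by \cref{sec:homotopy_meets_folds}. I would proceed by writing down the FOLDS signature $L$ for double categories, spelling out the interpretation under which ``a statement expressible in the FOLDS language'' becomes a genuine FOLDS formula over $L$, and then establishing the crucial comparison: that, between fibrant objects, the weak equivalences of the model structure on $\dblcat$ are exactly the double functors inducing a FOLDS-equivalence ($L$-equivalence) of the associated $L$-structures. This comparison is the delicate step, as it is precisely where the homotopical notion of weak equivalence on double categories must be reconciled with the combinatorial notion of equivalence that governs preservation of FOLDS truth. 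Once this dictionary is established, the theorem follows from the two observations above.
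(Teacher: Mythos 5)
Your overall architecture agrees with the paper's: the precise statement is \cref{double_equiv_same_formulae}, obtained by specializing the general principle (\cref{thm:we_then_structures_verify_same_formulate}) to the model structure of \cref{MS_equipments}, and your second observation\textemdash that \cref{intro_MS_equipments} identifies double biequivalences between equipments with weak equivalences between fibrant objects\textemdash is exactly how the specialization is justified. The problem is that the step you correctly flag as carrying all the content is both left unexecuted and mischaracterized. The ``connection'' demanded by \cref{sec:homotopy_meets_folds} is \emph{not} the comparison you propose to prove (that, between fibrant objects, weak equivalences coincide with double functors inducing an $\cL$-equivalence of the associated structures). That biconditional is stronger than anything the paper establishes or needs; only one implication is ever used, and it is not proved by a direct comparison of the two notions of equivalence. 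What must actually be verified is Condition $\bigtriangleup$: that the weak factorization system $(\Cof,\trfib)$ on $\dblcat$ is right-lifted along the nerve $M_- = \hom(D_{\dblcat}-,-)$ from the Reedy system $(\mono[\cL],\epi[\cL])$ on $\Set^{\cL_{\dblcat}}$. This is a statement about \emph{trivial fibrations} versus \emph{fiberwise surjections}, not about weak equivalences, and it presupposes constructing the realization diagram $D_{\dblcat}\colon\cL_{\dblcat}^{\op}\to\dblcat$ (\cref{Ddblcat}), which your proposal never does. The verification itself is the computation of \cref{lem:surviving_latching_maps}: the latching maps of $D_{\dblcat}$ at the kinds $O,H,V,S,\dot{E}$ recover precisely the generating cofibrations $\cI$ of \cref{def:cofib}, whence \cref{under_triangle_D_Reedy_cofibrant} gives Condition $\bigtriangleup$.

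Once Condition $\bigtriangleup$ is in place, the passage from ``weak equivalence between fibrant objects'' to ``$\cL$-equivalence of structures'' is not a direct dictionary but goes through Ken Brown's factorization lemma (\cref{brown_factorization}): the weak equivalence $F$ is replaced by a span of trivial fibrations $\bA \twoheadleftarrow P_F \twoheadrightarrow \bB$, which $M_-$ sends to a span of fiberwise surjections, i.e.\ an $\cL$-equivalence in the given context; Makkai's \cref{thm:same_formulae} then yields preservation of satisfaction. Without the construction of $D_{\dblcat}$ and the latching-map computation, your plan has no way to see that trivial fibrations of double categories (surjective on objects, full on horizontal and vertical morphisms, fully faithful on squares) are detected by fiberwise surjectivity of the nerve, and the argument cannot close. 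A smaller omission: the precise statement also requires the two interpretations of the context to be compatible with $M_F$, which is what lets one define the interpretation on the middle term of the span.
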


\subsection*{Acknowledgments}
This is a chapter of the author's PhD thesis. The author is deeply grateful to Emily Riehl for presenting the question in the first place, generously sharing her ideas and insight throughout the preparation, and her feedback on earlier drafts that greatly improved the presentation. The author is also deeply grateful to Dominic Verity for sharing his insight on FOLDS and many illuminating conversations on double categories during and before the preparation of this paper, for his thoughtful comments on early versions. The author would also like to thank both of them and Lyne Moser for their assistance in latex\textemdash especially for their help straightening up some disheveled diagrams. Thank you also to John Bourke that indicated a mistake in an earlier proof of \cref{prop:trivfibareweSq}, and to Lyne Moser and Maru Sarazola with whom we fixed it in \cite{MSVdouble_equivalences}. The author gratefully acknowledges support from the CoACT, and NSF grant DMS-1652600 for periods of this work.

\section{Background on model structures}\label{sec:intro_MS}

Model categories provide a framework to do homotopy theory in diverse contexts by abstracting fundamental properties of the category of topological spaces. One of the main components is a distinguished class of morphisms, called \emph{weak equivalences}, that encode the notion of sameness in the context involved.  Model categories, however, come with additional structure that allows for a strong machinery to construct intrinsically homotopical notions.

\subsection{Elementary definitions}
Model structures were introduced by Quillen in \cite{Quillen}, and have since  been incorporated in the research of different areas. Historically, the definition consists on giving a complete and cocomplete category together with the specification of three classes of distinguished morphisms of it, which satisfy a list of axioms. Nowadays it is common practice to encode these conditions with the notion of weak factorization systems, initiated by \cite{Joyal_Tierney}, and that is the approach we take in this brief introduction.

\begin{defn} Let $\cC$ be a category, and $l\colon A\to B$ and $r\colon X\to Y$ be two morphisms in $\cC$. When for every commutative diagram as below left there is a lift $h$ as below right

\begin{tz}
    \node[](1) {$A$};
    \node[right of=1](2) {$X$};
    \node[below of=1](3) {$B$};
    \node[below of=2](4) {$Y$};

    \draw[->] (1) to node[above,la]{$f$} (2);
    \draw[->] (3) to node[below,la]{$g$} (4);
    \draw[->] (1) to node[left,la]{$l$} (3);
    \draw[->] (2) to node[right,la]{$r$} (4);

    \node[right of=2,xshift=2cm](1) {$A$};
    \node[right of=1](2) {$X$};
    \node[below of=1](3) {$B$};
    \node[below of=2](4) {$Y$};

    \draw[->] (1) to node[above,la]{$f$} (2);
    \draw[->] (3) to node[below,la]{$g$} (4);
    \draw[->] (1) to node[left,la]{$l$} (3);
    \draw[->] (2) to node[right,la]{$r$} (4);
    \draw[->,dashed] (3) to node[above,la]{$h$} (2);
\end{tz}
with $f=hl$ and $g=rh$, we say that the map $r$ has the \emph{right lifting property} with respect to $l$, and equivalently that $l$ has the \emph{left lifting property} with respect to $r$.
\end{defn}

\begin{defn} Let $\cC$ be a category, and $\cI\subset\cC$ a class of morphisms in $\cC$. A morphism $f$ in $\cC$ has the right (resp.\ left) lifting property with respect to $\cI$ if $f$ has the right (resp.\ left) lifting property with respect to all morphisms in $\cI$.
\end{defn}

\begin{notation}
If $\mathcal{D}$ is any class of morphisms in $\cC$, we use $\mathcal{D}^\boxslash$ to denote the class of morphisms in $\cC$ that have the right lifting property with respect to every morphism in $\mathcal{D}$, and $^\boxslash\mathcal{D}$ to denote the class of morphisms in $\cC$ that have the left lifting property with respect to every morphism in $\mathcal{D}$. 
\end{notation}

We now can define weak factorization systems. 

\begin{defn}Let $\cC$ be a category. A weak factorization system on $\cC$ is a pair $(\mathcal{L},\mathcal{R})$ of classes $\mathcal{L}$ and $\mathcal{R}$ of morphisms in $\cC$ such that:
\begin{itemize}
    \item[(wfs1)] every morphism $f$ in $\cC$ can be factored as $f=rl$ with $r$ in $\mathcal{R}$ and $l$ in $\mathcal{L}$, and
    \item[(wfs2)] $\mathcal{R}$ consists of all the morphisms that have the right lifting property with respect to $\mathcal{L}$ and $\mathcal{L}$ consists of all the morphisms that have the left lifting property with respect to $\mathcal{R}$\textemdash that is $\mathcal{R}=\mathcal{L}^\boxslash$ and $\mathcal{L}=^\boxslash\!\mathcal{R}$.
\end{itemize}
\end{defn}

\begin{ex}\label{ex:wfs_fromI} In any locally presentable category $\cC$, any set $\cI$ of morphisms of $\cC$ produces a weak factorization system $(^\boxslash(\mathcal{I}^\boxslash), \mathcal{I}^\boxslash)$. Moreover, it admits a functorial factorization that is accessible.
\end{ex}

The next result consists of basic properties of weak factorization systems that are useful to have in mind when using them to define model structures. 

\begin{prop} Let $(\mathcal{L},\mathcal{R})$ be a weak factorization system on a category $\cC$. Then the following statements hold. 
\begin{enumerate}
    \item The classes $\mathcal{L}$ and $\mathcal{R}$ contain isomorphisms and are closed under composition and retracts. 
    \item The left class $\mathcal{L}$ is closed under coproducts, pushouts, and transfinite compositions. 
    \item The right class $\mathcal{R}$ is closed under products, pullbacks, and transfinite inverse limits.
\end{enumerate}
\end{prop}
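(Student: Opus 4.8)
The plan is to reduce every assertion to the lifting-property characterization supplied by (wfs2). Since $\mathcal{R}=\mathcal{L}^\boxslash$ and $\mathcal{L}={}^\boxslash\mathcal{R}$, each class is exactly the collection of morphisms satisfying a lifting property against a fixed class of maps, and all stated closure properties are instances of the following two general facts: (a) for any class $\mathcal{D}$ of morphisms in $\cC$, the class ${}^\boxslash\mathcal{D}$ contains the isomorphisms and is closed under composition, retracts, coproducts, pushouts, and transfinite composition; and (b) dually, $\mathcal{D}^\boxslash$ contains the isomorphisms and is closed under composition, retracts, products, pullbacks, and transfinite inverse limits. I would obtain (b) from (a) by passing to the opposite category $\cC^{\op}$, in which a left lifting problem becomes a right lifting problem and colimits become limits; it therefore suffices to prove (a). The proposition then follows: part (1) applies both facts to $\mathcal{L}$ and $\mathcal{R}$, part (2) is the colimit half of (a) for $\mathcal{L}$, and part (3) is the limit half of (b) for $\mathcal{R}$.

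To prove (a) I would check each closure property by a direct diagram chase against an arbitrary $r\in\mathcal{D}$. For isomorphisms, $f\circ l^{-1}$ is the required filler. For composition, given $l_1,l_2\in{}^\boxslash\mathcal{D}$ and a lifting problem over $l_2 l_1$, I first solve the problem determined by $l_1$, then feed the resulting map into the problem determined by $l_2$, and verify the two triangles. For retracts, writing $l'$ as a retract of $l$ in the arrow category $\cC^{\to}$, I precompose a square over $l'$ with the retract data to obtain a square over $l$, solve it, and postcompose the solution with the retraction. Coproducts use the universal property of the coproduct to assemble a family of fillers into one, and pushouts use the universal property of the pushout to extend a single filler; in each case the relevant triangles commute by construction.

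The one step requiring genuine bookkeeping is transfinite composition. Given an ordinal-indexed chain with each successor map in ${}^\boxslash\mathcal{D}$ and colimit map $X_0\to X_\infty$, I would construct a filler against $r\in\mathcal{D}$ by transfinite induction: at successor stages I solve a one-step lifting problem, and at limit stages I invoke the universal property of the colimit to glue the previously built partial fillers into a single coherent map, checking that it restricts correctly. The induction yields a cocone whose induced map out of the colimit is the desired diagonal. I expect this compatibility of the partial lifts at limit stages to be the main\textemdash though still routine\textemdash obstacle; everything else is finite diagram chasing, and the right-class statements of (3) fall out formally by the duality already noted.
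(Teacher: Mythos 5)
Your argument is correct and complete in outline: reducing everything to the closure properties of the classes $^\boxslash\mathcal{D}$ and $\mathcal{D}^\boxslash$, handling the right class by duality in $\cC^{\op}$, and running the transfinite induction for transfinite composites is exactly the standard proof. The paper itself states this proposition without proof as background, so there is nothing to compare against beyond noting that your route is the canonical one.
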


\begin{defn}A weak factorization system $(\cL,\mathcal{R})$ on a category $\cC$ is \emph{cofibrantly generated} if there exists a set $\cI$ of morphisms in $\cC$ such that $\cR=\cI^\boxslash$. We call such $\cI$ the \emph{generating set} of the weak factorization system. 
\end{defn}

\begin{rem} When $\cI$ is the generating set of a weak factorization system  $(\cL,\mathcal{R})$ on a category $\cC$ we have that $\cL=^\boxslash\! (\cI^\boxslash)$, and consequently $\cI\subset \cL$.
\end{rem}

\begin{defn}A \emph{model category} is a complete and cocomplete category $\mathcal{M}$, together with three distinguished classes of morphisms in $\mathcal{M}$:
\begin{rome}
    \item a class $\Cof$ of cofibrations ($\cto$),
    \item a class $\Fib$ of fibrations ($\fto$), and
    \item a class $\mathcal{W}$ of weak equivalences ($\wto$),  
\end{rome}
verifying the following conditions. 
\begin{itemize}
    \item[(mc1)] The class $\mathcal{W}$ satisfies the 2-out-of-3 condition, this is, given two composable morphisms $f$ and $g$, if any two of the three morphisms $f,g$ and $fg$ are in $\W$ then so is the third.
    \item[(mc2)] The pairs $(\Cof,\Fib\cap\W)$ and $(\Cof\cap\W,\Fib)$ are weak factorization systems on $\mathcal{M}$.
\end{itemize}
\end{defn}

We often write $(\mathcal{M},\Cof,\Fib,\W)$ or just $\mathcal{M}$ to denote a model category as above, and in that case we say that $(\Cof,\Fib,\W)$ is a model structure on $\mathcal{M}$.

\begin{defn}In a model category $(\mathcal{M},\Cof,\Fib,\W)$, 
\begin{rome}
    \item a morphism in $\Cof\cap\W$ is called a \emph{trivial cofibration} ($\cwto$), and
    \item a morphism in $\Fib\cap\W$ is called a \emph{trivial fibration} ($\fwto$).
\end{rome}
\end{defn}

\begin{defn}A model category $(\cM,\Cof,\Fib,\cW)$ is \emph{cofibrantly generated} if the weak factorization systems $(\Cof,\Fib\cap\cW)$ and $(\Cof\cap\cW,\Fib)$ are cofibrantly generated. Let $\cI$ and $\cJ$ be their respective generating sets, then a morphism in $\cI\subseteq\Cof$ is called \emph{generating cofibration} and a morphism in $\cJ\subseteq\Cof\cap\cW$ is called \emph{generating trivial cofibration}.
\end{defn}

Since we require that model structures are complete and cocomplete, they admit both an initial and a terminal object. This allows us to define two distinguished classes of objects that play an important role when studying the formal homotopy theory determined by a model structure.

\begin{defn}Let $\mathcal{M}$ be a model category, and let $\emptyset$ be its initial object and $\ast$ its terminal object. An object $X$ in $\mathcal{M}$ is called \emph{cofibrant} when the unique morphism $\emptyset\to X$ is a cofibration, and it is called \emph{fibrant} when the unique morphism $X\to\ast$ is a fibration.
\end{defn}

It is known that fibrant-cofibrant objects of a model structure $\cM$ are especially well behaved. This motivates the following definitions, which give a method to replace any given $X\in\cM$ by a weakly equivalent object that is both fibrant and cofibrant. 

\begin{defn}\label{def:firbant_replacement} Let $(\mathcal{M},\Cof,\Fib,\W)$ be a model category, and $X$ and object in $\cM$.
\begin{rome}
    \item Since the pair $(\Cof,\Fib\cap\cW)$ is a weak factorization system on $\cM$, the unique morphism $\emptyset\to X$ from the initial object to $X$ admits a factorization in $\cM$ as 

\begin{tz}
    \node[](1) {$\emptyset$};
    \node[right of=1](2) {$X^c$};
    \node[right of=2](3) {$X$};

    \draw[->>] (2) to node[below, la]{$\sim$} node[above,la]{$q_X$} (3);
    \draw[>->] (1) to (2);
\end{tz}
where $X^c$ is a cofibrant object, and $q_X\colon X^c\to X$ is a trivial fibration. The pair $(X^c,q_X)$ is called a \emph{cofibrant replacement} of $X$; we will often call just $X^c$ a cofibrant replacement. 

\item Since the pair $(\Cof\cap\cW,\Fib)$ is a weak factorization system on $\cM$, the unique morphism $X\to \ast$ from $X$ to the final object admits a factorization in $\cM$ as 

\begin{tz}
    \node[](1) {$X$};
    \node[right of=1](2) {$X^f$};
    \node[right of=2](3) {$\ast$};

    \draw[->>] (2) to (3);
    \draw[>->] (1) to node [below, la]{$\sim$} node[above,la]{$j_X$} (2);
\end{tz}
where $X^f$ is a fibrant object, and $j_X\colon X\to X^f$ is a trivial cofibration. The pair $(X^f,j_X)$ is called a \emph{fibrant replacement} of $X$; we will often call just $X^f$ a fibrant replacement. 

\end{rome}
\end{defn}

\subsection{Fibrantly induced model structures} Given the data of a (potential) model category, it is often hard to prove by hand that such data satisfy the required axioms. For this reason, several approaches have been developed to transfer a model structure from a known one, or construct it from certain known data; an especially effective one consists on right/left-lifting a model structure in the presence of an adjunction, see \cite{HKRS} and \cite{GKR_lifting_AMS}. In this subsection, we summarize another result on the existence of a model structure from certain data, from \cite{fibrantly_induced}.

This result is designed to be used in a setting where one has a locally presentable category $\cC$, and classes of desired cofibrations and fibrant objects in mind, together with classes of weak equivalences and fibrations between these fibrant objects, and wishes to extend this to a model structure on the category $\cC$. We rely on the existence of an auxiliary weak factorization system $(\an,\nfib)$ generated by a set $\J$, that serves a similar role as the \emph{anodyne extensions} and \emph{naive fibrations} of \cite{cisinski}, from which we borrow this terminology.

\begin{defn}
An object $X\in \C$ is \emph{naive fibrant} if the unique morphism $X\to 1$ to the terminal object is a naive fibration. 
\end{defn}

\begin{defn}
Given an object $X\in\C$, a \emph{naive fibrant replacement of $X$} is an anodyne extension
\[\iota \colon X \to X'\]
such that $X'$ is naive fibrant. Similarly, given a morphism $f \colon X\to Y$ in $\C$, \emph{naive fibrant replacement of  $f$} is a commutative square
\begin{tz}
\node[](1) {$X$}; 
\node[right of=1](2) {$Y$}; 
\node[below of=1](1') {$X'$}; 
\node[below of=2](2') {$Y'$}; 

\draw[->] (1) to node[above,la]{$f$} (2); 
\draw[->] (1) to node[left,la]{$\iota_X$} (1'); 
\draw[->] (2) to node[right,la]{$\iota_Y$} (2'); 
\draw[->] (1') to node[below,la]{$f'$} (2');
\end{tz}
where $X'$, $Y'$ are naive fibrant, and $\iota_X$, $\iota_Y$ are anodyne extensions.
\end{defn}

\begin{rem}
Note that we always have naive fibrant replacements, as we can build them from the  factorization system $(\an,\nfib)$.
\end{rem}

We stated that we would also have in mind a class $\Wf$ of morphisms between naive fibrant objects that contains isomorphisms, which we think of as weak equivalences between fibrant objects. With this class and the previous definitions, we construct a new class of morphisms $\W$ which will be the (global) weak equivalences.

\begin{defn}\label{def:we}
A morphism $f \colon X \to Y$ in $\C$ is a \emph{weak equivalence} if there exists a naive fibrant replacement of $f$
\begin{tz}
\node[](1) {$X$}; 
\node[right of=1](2) {$Y$}; 
\node[below of=1](1') {$X'$}; 
\node[below of=2](2') {$Y'$}; 

\draw[->] (1) to node[above,la]{$f$} (2); 
\draw[->] (1) to node[left,la]{$\iota_X$} (1'); 
\draw[->] (2) to node[right,la]{$\iota_Y$} (2'); 
\draw[->] (1') to node[below,la]{$f'$} (2');
\end{tz}
such that $f'$ is in $\Wf$. We denote by $\W$ the class of weak equivalences.
\end{defn}

Recall that in a locally presentable category $\cC$, the pair $(^\boxslash(\cI^\boxslash),\I^\boxslash)$ forms a weak factorization system for any set $\cI$ of morphisms in $\cC$.

\begin{theorem}[{\cite[Theorem 2.8 and Proposition 2.21]{fibrantly_induced}}]\label{thm:fibrantly_generated}
Let $\C$ be a locally presentable category, $\I$ be a set of morphisms in $\C$, and $(\an,\nfib)$ be a weak factorization system in $\C$ generated by a set such that $\an\subseteq\ ^\boxslash(\cI^\boxslash)$. Suppose in addition that we have a class $\Wf$ of morphisms in $\C$ between naive fibrant objects such that
\begin{enumerate}[label=(\arabic*)]
    \item\label{ax:trivfib} $\cI^\boxslash\subseteq\W$, where $\W$ is described in \cref{def:we},
    \item\label{2of6Wf} $\Wf$ has 2-out-of-6, 
    \item \label{accessibility} there exists a class $\overline{\cW}$ of morphisms such that $\Wf$ is the restriction of $\overline{\cW}$ to the morphisms between naive fibrant objects and $\overline{\cW}$ considered as a full subcategory of $\C^{\mathbbm{2}}$ is accessible,
 \item \label{Path} for every naive fibrant object $X$, there is a factorization of the diagonal morphism 
    \[ X\xrightarrow{w} \Path X\xrightarrow{p} X\times X \]
    such that $w\in \Wf$ and $p\in \nfib$,
    \item\label{fibwe} $\nfib\cap\Wf\subseteq \cI^\boxslash$.
\end{enumerate}
Then there exists a combinatorial model structure on $\C$ with cofibrations given by the $\I$-cofibrations, fibrant objects given by the naive fibrant objects, and weak equivalences given by the morphisms in $\W$. Furthermore, weak equivalences (resp.\ fibrations) between fibrant objects are precisely the morphisms in $\Wf$ (resp.\ $\nfib$).
\end{theorem}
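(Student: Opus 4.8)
The plan is to verify the hypotheses of Jeff Smith's recognition theorem for combinatorial model structures. Set $\Cof := {}^\boxslash(\cI^\boxslash)$, so that $(\Cof,\cI^\boxslash)$ is already a weak factorization system by \cref{ex:wfs_fromI}; note that since $\an\subseteq\Cof$ we automatically get $\cI^\boxslash\subseteq\an^\boxslash=\nfib$. Let $\W$ be the class of \cref{def:we}. Smith's theorem reduces the existence of the model structure, with cofibrations $\Cof$, weak equivalences $\W$, and fibrations $\Fib:=(\Cof\cap\W)^\boxslash$, to four facts: (i) $\W$ is closed under retracts and satisfies 2-out-of-3; (ii) $\cI^\boxslash\subseteq\W$; (iii) $\Cof\cap\W$ is closed under pushouts and transfinite compositions; and (iv) $\W$ is accessible and accessibly embedded in $\C^{\mathbbm 2}$. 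Hypothesis (ii) is condition (1), so the work is in (i), (iii), and (iv), after which the description of fibrant objects, and of fibrations and weak equivalences between them, is extracted separately.

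First I would establish the basic calculus of $\W$. Using the path-object factorization of condition (4) one introduces a homotopy relation on maps between naive fibrant objects and runs the usual Brown-factorization arguments to show that $\Wf$ is homotopy-saturated; together with the functoriality of the $(\an,\nfib)$-factorization, this shows that whether the comparison map $f'$ of \cref{def:we} lies in $\Wf$ is independent of the chosen naive fibrant replacement, so $\W$ is well defined and restricts to exactly $\Wf$ on maps between naive fibrant objects. Since every anodyne extension admits a naive fibrant replacement by an identity, $\an\subseteq\W$. Now the 2-out-of-6 property of $\Wf$ (condition (2)) propagates through functorial replacement to yield 2-out-of-3 and closure under retracts for $\W$, which is (i). For (iv), I would exhibit $\W$ as built from accessible data: the functorial fibrant-replacement endofunctor of $\C^{\mathbbm 2}$ coming from the accessible factorization of \cref{ex:wfs_fromI} is accessible, and condition (3) supplies an accessible class $\overline{\cW}$ restricting to $\Wf$; presenting $\W$ as a pseudo-pullback of these accessible functors and invoking the closure of accessible categories under pseudo-limits gives accessibility and accessible embedding.

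The main obstacle is (iii), the closure of the trivial cofibrations $\Cof\cap\W$ under pushout and transfinite composition; closure under retracts is formal. Here conditions (4) and (5) do the essential work. First, between naive fibrant objects, $\nfib\cap\W=\cI^\boxslash$: the inclusion $\supseteq$ holds because $\cI^\boxslash\subseteq\nfib$ and $\cI^\boxslash\subseteq\W$ (condition (1)), and $\subseteq$ is exactly condition (5). The plan is then to characterize $\Cof\cap\W$ as a left lifting class, namely the maps having the left lifting property against every naive fibration with naive fibrant codomain. Granting this, $\Cof\cap\W={}^\boxslash(\mathcal S)$ for a class $\mathcal S$, so it is automatically closed under pushouts, transfinite compositions, and retracts. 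The nontrivial inclusion is that a trivial cofibration lifts against such fibrations: one reduces to fibrant codomains using that naive fibrant replacements are anodyne, hence in $\Cof\cap\W$, and then runs a retract argument built on the path object of condition (4), in the spirit of the classical proof that trivial cofibrations lift against fibrations between fibrant objects.

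With (i)--(iv) in hand, Smith's theorem yields the combinatorial model structure with the stated cofibrations and weak equivalences and with $\Fib=(\Cof\cap\W)^\boxslash$. It remains to identify the fibrant objects and the maps between them. An object $X$ is fibrant iff $X\to 1$ lies in $\Fib$; using the path object and the identification $\nfib\cap\W=\cI^\boxslash$ between fibrant objects, one checks that this is equivalent to $X\to 1$ being a naive fibration, i.e.\ to $X$ being naive fibrant. The same path-object lifting arguments, together with condition (5), show that a map between fibrant objects is a fibration iff it is a naive fibration, and by construction a weak equivalence between fibrant objects is precisely a map in $\Wf$. This completes the verification.
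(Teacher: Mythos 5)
This statement is not proved in the paper: it is imported, with attribution, from \cite[Theorem 2.8 and Proposition 2.21]{fibrantly_induced} and used as a black box, so there is no in-paper argument to compare yours against. Judged on its own, your outline follows the route one would expect for such a result (and, as far as I can tell, essentially the route of the cited reference): feed everything into Smith's recognition theorem, with the genuine work concentrated in (a) showing that whether the replaced map $f'$ of \cref{def:we} lies in $\Wf$ is independent of the chosen naive fibrant replacement\textemdash which is where the path object of condition (4) and the 2-out-of-6 property of condition (2) are actually consumed\textemdash and (b) exhibiting the trivial cofibrations as a lifting class so that closure under pushout and transfinite composition comes for free. Your treatment of accessibility via a pseudo-pullback of the accessible replacement functor against $\overline{\cW}$, and the retract argument identifying the fibrant objects with the naive fibrant ones, are both standard and correctly deployed.

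One imprecision you should repair: if $\mathcal{S}$ denotes the class of naive fibrations with naive fibrant codomain, the asserted equality $\Cof\cap\W={}^\boxslash\mathcal{S}$ cannot be literally correct, because a map in ${}^\boxslash\mathcal{S}$ need not be a cofibration\textemdash a trivial fibration in $\cI^\boxslash$ need not have naive fibrant codomain, so having the left lifting property against $\mathcal{S}$ does not force the left lifting property against $\cI^\boxslash$. The statement you want is $\Cof\cap\W=\Cof\cap{}^\boxslash\mathcal{S}$. Since this is an intersection of two left lifting classes, it is still closed under pushouts, transfinite compositions, and retracts, so the conclusion you draw from it survives unchanged; but as written the characterization is false and a referee would flag it.
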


\section{Background on double categories}\label{sec:intro_dblcats}
In this section we present a different kind of $2$-dimensional categories. As before, we briefly present the main aspects of the theory that we will need. The interested reader can refer to \cite{Grandis} for a detailed account of double categories, and \cite{KellyStreet} for a more concise treatment. 

\begin{defn}\label{def:dblcat}
A \emph{double category} $\bA$ consists of
\begin{rome}
\item objects $A$, $B$, $C$, $\ldots$,
\item horizontal morphisms $f\colon A\arrowdot A'$ with composition denoted by $g\circ f$ or $gf$,
\item vertical morphisms $u\colon A\to B$ with composition denoted by $v\bullet u$ or $vu$,
\item squares (or $2$-cells) $\sq{\alpha}{f}{g}{u}{v}$ of the form 
\begin{tz}
\node (A) at (0,0) {$A$};
\node (B) at (1.5,0) {$A'$};
\node (A') at (0,-1.5) {$B$};
\node (B') at (1.5,-1.5) {$B'$};
\draw[->,pro] (A) to node[above,scale=0.8] {$f$} (B);
\draw[->,pro] (A') to node[below,scale=0.8] {$g$} (B');
\draw[->] (A) to node[left,scale=0.8] {$u$} (A');
\draw[->] (B) to node[right,scale=0.8] {$v$} (B');

\node[scale=0.8] at (.75,-.75) {$\alpha$};
\end{tz}
with both horizontal composition along their vertical boundaries and vertical composition along their horizontal boundaries, and
\item horizontal identities $\id_A\colon A\to A$ and vertical identities $e_A\colon A\arrowdot A$ for each object~$A$, vertical identity squares $\sq{e_f}{f}{f}{\id_A}{\id_B}$ for each horizontal morphism $f\colon A\arrowdot A'$, horizontal identity squares $\sq{\id_u}{\id_A}{\id_{B}}{u}{u}$ for each vertical morphism $u\colon A\to B$, and identity squares $\square_A=\id_{e_A}=e_{\id_A}$ for each object $A$,
\end{rome}
such that all compositions are unital and associative, and such that the horizontal and vertical compositions of squares satisfy the interchange law.
\end{defn}

\begin{defn}
Let $\bA$ and $\bB$ be double categories. A \emph{double functor} $F\colon \bA\to \bB$ consists of maps on objects, horizontal morphisms, vertical morphisms, and squares, which are compatible with domains and codomains and preserve all double categorical compositions and identities strictly.
\end{defn}

\begin{notation}
We write $\dblcat$ for the category of double categories and double functors.
\end{notation}

The category of double categories is cartesian closed, we proceed to present the necessary definitions to describe the internal hom. See \cite[\S 3.2.7]{Grandis} for more explicit definitions.

\begin{defn} \label{def:natural_transformations}
Let $F,G\colon \bA\to \bB$ be double functors. A \emph{horizontal natural transformation} $h\colon F\Rightarrow G$ consists of 
\begin{rome}
    \item a horizontal morphism $h_A\colon FA\arrowdot GA$ in $\bB$, for each object $A\in \bA$, and
    \item a square $\sq{h_u}{h_A}{h_{A'}}{Fu}{Gu}$ in $\bB$, for each vertical morphism $u\colon A\to A'$ in $\bA$,
\end{rome}  
such that the assignment of squares is functorial with respect to the composition of vertical morphisms, and these data satisfy a naturality condition with respect to horizontal morphisms and squares.

Similarly, let $F'\colon\bA\to\bB$ be a double functor, a \emph{vertical natural transformation} $r\colon F\Rightarrow F'$ consists of 
\begin{rome}
    \item a vertical morphism $r_A\colon FA\to FB$ in $\bB$, for each object $A\in \bA$, and
    \item a square $\sq{r_f}{Ff}{F'f}{r_A}{r_{A'}}$ in $\bB$, for each horizontal morphism $f\colon A\arrowdot A'$ in $\bA$,
\end{rome}  
satisfying transposed conditions.
\end{defn}

\begin{defn}\label{def:modif} Given horizontal natural transformations $h\colon F\Rightarrow G$ and $k\colon F'\Rightarrow G'$, and vertical natural transformations $r\colon F\Rightarrow F'$ and $s\colon G\Rightarrow G'$, a \emph{modification} $\sq{\mu}{h}{k}{r}{s}$ consists of 
\begin{rome}
    \item a square $\sq{\mu_A}{h_A}{k_A}{r_A}{s_A}$ in $\bB$, for each object $A\in \bA$,
\end{rome} 
satisfying horizontal and vertical coherence conditions with respect to the squares of the natural transformations $h$, $k$, $r$, and $s$.
\end{defn}

\begin{defn}\label{internalHom}
Let $\bA$ and $\bB$ be double categories. We define the \emph{hom double category} $\llbracket\bA,\bB\rrbracket$ whose 
\begin{rome}
    \item objects are the double functors $\bA\to \bB$,
    \item horizontal morphisms are the horizontal natural transformations,
    \item vertical morphisms are the vertical natural transformations, and
    \item squares are the modifications.
\end{rome} 
\end{defn}

\begin{prop}[{\cite[Proposition 2.11]{FPP}}] \label{prop:adjinternalhom}
For every double category $\bA$, there is an adjunction

$$\dblcat(\bA,\llbracket\bB,\bC\rrbracket))\cong\dblcat(\bA\times\bB,\bC)$$

\end{prop}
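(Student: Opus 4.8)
The plan is to exhibit, naturally in $\bA$, a bijection between the double functors $\bA\times\bB\to\bC$ and the double functors $\bA\to\llbracket\bB,\bC\rrbracket$; this bijection is exactly the tensor--hom adjunction $(-\times\bB)\dashv\llbracket\bB,-\rrbracket$ witnessing that $\dblcat$ is cartesian closed. I would construct the two passages \emph{currying} and \emph{uncurrying} explicitly and verify that they are mutually inverse.

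For the currying, let $F\colon\bA\times\bB\to\bC$ be a double functor. Define $\hat F\colon\bA\to\llbracket\bB,\bC\rrbracket$ as follows. On an object $A$, let $\hat F(A)$ be the restriction $F(A,-)\colon\bB\to\bC$, sending an object $B$ to $F(A,B)$, a horizontal morphism $b$ to $F(\id_A,b)$, a vertical morphism $v$ to $F(e_A,v)$, and a square $\beta$ to $F(\square_A,\beta)$; this is a double functor by the functoriality of $F$ and the identity laws of $\bA$. On a horizontal morphism $a\colon A\arrowdot A'$, let $\hat F(a)\colon\hat F(A)\Rightarrow\hat F(A')$ be the horizontal natural transformation whose component at an object $B$ is the horizontal morphism $F(a,\id_B)$ and whose component at a vertical morphism $v\colon B\to B'$ is the square $F(e_a,\id_v)$. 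Dually, a vertical morphism $u$ of $\bA$ is sent to the vertical natural transformation with components $F(u,e_B)$ and $F(\id_u,e_b)$, and a square $\alpha$ of $\bA$ to the modification with components $F(\alpha,\square_B)$. In each case the naturality condition of the transformation and the coherence conditions of the modification are instances of the preservation of composition by $F$ together with the interchange law in $\bC$.

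One then checks that $\hat F$ is itself a double functor into $\llbracket\bB,\bC\rrbracket$, i.e.\ that it preserves the horizontal and vertical composites, the identities, and the interchange of $\bA$; componentwise in $\bB$ each of these equations reduces to the corresponding law for $F$. The uncurrying is symmetric: from $G\colon\bA\to\llbracket\bB,\bC\rrbracket$ I recover $\check G\colon\bA\times\bB\to\bC$ by $\check G(A,B)=G(A)(B)$ on objects and, on a product cell such as a horizontal morphism $(a,b)\colon(A,B)\arrowdot(A',B')$, by the composite $G(A')(b)\circ G(a)_B$, which equals $G(a)_{B'}\circ G(A)(b)$ precisely by the horizontal naturality of the horizontal natural transformation $G(a)$; the vertical- and square-level cases are analogous. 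A direct computation then gives $\widehat{\check G}=G$ and $\check{\hat F}=F$, and naturality of the bijection in $\bA$ (and in $\bB$ and $\bC$) is immediate from the pointwise definitions.

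The principal obstacle is bookkeeping rather than conceptual difficulty. The two delicate points are (i) confirming that the square-valued components above genuinely satisfy the naturality axiom of a horizontal or vertical natural transformation and the two coherence axioms of a modification, and (ii) verifying that the uncurrying is well defined, that is, that reassembling a cell $(a,b)$ of $\bA\times\bB$ from its $\bA$- and $\bB$-directions does not depend on the order in which the two directions are traversed. Both are governed by the same mechanism, namely the interchange law in $\bC$ together with the naturality conditions built into $\llbracket\bB,\bC\rrbracket$, which guarantee that the two ways of filling a product square agree; once this is isolated the remaining checks are routine. (Alternatively one could argue abstractly, viewing double categories as internal categories in $\cat$ and transporting the cartesian closed structure, but the explicit currying is the most self-contained route.)
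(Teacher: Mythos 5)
Your proposal is correct: the explicit currying/uncurrying you describe is exactly the tensor--hom bijection exhibiting the cartesian closed structure of $\dblcat$, and the points you isolate as delicate (naturality of the square-level components, and independence of the order of traversal in the uncurrying, both via interchange) are indeed the only nontrivial checks. The paper itself gives no proof, deferring entirely to \cite[Proposition 2.11]{FPP}, where the argument is in essence the one you give; the only cosmetic discrepancy is that your labels $\id_A$ and $e_A$ for the identity horizontal and vertical morphisms are swapped relative to the paper's (admittedly confusing) convention in \cref{def:dblcat}.
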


\begin{rem}\label{rem:upgraded_adjunction_dbl_hom}Using the associativity of $\times$ and the fact that every double category is the colimit of the terminal double category $\mathbb{1}$, the double category $\bH\mathbb{2}$ free on a horizontal morphism, the double category $\bV\mathbb{2}$ free on a vertical morphism, and the double category $\bH\mathbb{2}\times\bV\mathbb{2}$ free on a non-trivial square, we can see that the isomorphism above extends to an isomorphism of double categories 
$$\llbracket \bA,\llbracket\bB,\bC\rrbracket\rrbracket\cong\llbracket \bA\times\bB,\bC\rrbracket.$$
    
\end{rem}

It is usual to want to consider laxer conditions in \cref{def:natural_transformations} and adjusted coherence conditions in \cref{def:modif}, as we briefly describe below; see \cite[\S 3.8]{Grandis} for precise definitions.

\begin{defn}\label{def:pseudo_natural_transformations}
Let $F,G\colon \bA\to \bB$ be double functors. A \emph{horizontal pseudo natural transformation} $h\colon F\Rightarrow G$ consists of 
\begin{rome}
    \item a horizontal morphism $h_A\colon FA\arrowdot GA$ in $\bB$, for each object $A\in \bA$, 
    \item a square $\sq{h_u}{h_A}{h_{A'}}{Fu}{Gu}$ in $\bB$, for each vertical morphism $u\colon A\to A'$ in $\bA$, and
    \item a vertically invertible square $\sq{h_f}{(Gf) h_A}{h_B (Ff)}{e_{FA}}{e_{GB}}$ in $\bB$, for each horizontal morphism $f\colon A\arrowdot B$ in $\bA$, expressing a pseudo naturality condition for horizontal morphisms.
    \end{rome} 
These assignments of squares are functorial with respect to compositions of horizontal and vertical morphisms, and these data satisfy a naturality condition with respect to squares.

Similarly, one can define a transposed notion of \emph{vertical pseudo natural transformation} between double functors.
\end{defn}

\begin{defn}\label{def:modif_ps}
A \emph{modification} in a square of horizontal and vertical pseudo natural transformations is defined similarly to \cref{def:modif}, with the horizontal and vertical coherence conditions taking the pseudo data of the transformations into account. See \cite[Definition 3.8.3]{Grandis}
\end{defn}

\begin{defn}\label{def:pseudohomdouble}
Let $\bA$ and $\bB$ be double categories. We define the \emph{pseudo hom double category} $\llbracket\bA,\bB\rrbracket_{\mathrm{ps}}$ whose
\begin{rome}
    \item objects are the double functors $\bA\to \bB$,
    \item horizontal morphisms are the horizontal pseudo natural transformations,
    \item vertical morphisms are the vertical pseudo natural transformations, and
    \item squares are the modifications.
\end{rome}
\end{defn}

In \cite{Bohm} B\"ohm introduced a Gray tensor product for $\dblcat$ such that, analogously to the $2$-categorical case, the corresponding hom-double categories make use of the notions of horizontal and vertical pseudo transformations, and modifications between them, assembling into \cref{def:pseudohomdouble}.

\begin{prop}[{\cite[\S 3]{Bohm}}] \label{prop:Bohm}
There is a symmetric monoidal structure on $\dblcat$ given by the Gray tensor product 
 \[ \otimes_\mathrm{Gr}\colon \dblcat\times \dblcat\to \dblcat. \]
 Moreover, this monoidal structure is closed. That is, for all double categories $\bA$, $\bB$, and $\bC$, there is an isomorphism
\[ \dblcat(\bA,\llbracket\bB,\bC\rrbracket_\mathrm{ps})\cong\dblcat(\bA\otimes_\mathrm{Gr} \bB, \bC), \]
natural in $\bA$, $\bB$ and $\bC$.
\end{prop}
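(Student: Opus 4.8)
The plan is to establish the closed structure first and to deduce the monoidal data from it by representability. The conceptual core is an auxiliary notion of \emph{cubical functor} $\bA\times\bB\to\bC$: an assignment on the underlying objects, horizontal morphisms, vertical morphisms, and squares of $\bA\times\bB$ which is an ordinary double functor separately in each variable, but which is only required to commute with the two ``mixed'' composites up to specified vertically invertible comparison squares\textemdash one for each pair consisting of a cell of $\bA$ and a cell of $\bB$\textemdash subject to the evident functoriality and interchange coherences. First I would unwind \cref{def:pseudo_natural_transformations,def:modif_ps} to produce a natural bijection
\[
\dblcat(\bA,\llbracket\bB,\bC\rrbracket_{\ps})\;\cong\;\mathrm{Cub}(\bA,\bB;\bC),
\]
where the right-hand side denotes the set of cubical functors: a double functor $\bA\to\llbracket\bB,\bC\rrbracket_{\ps}$ sends an object of $\bA$ to a double functor $\bB\to\bC$, a horizontal (resp.\ vertical) morphism of $\bA$ to a horizontal (resp.\ vertical) pseudo natural transformation, and a square to a modification. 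Transposing these data recovers exactly separate functoriality in each variable together with the vertically invertible comparison squares of a cubical functor.

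Second, I would construct $\bA\otimes_{\mathrm{Gr}}\bB$ as the double category presenting cubical functors, i.e.\ representing the functor $\bC\mapsto\mathrm{Cub}(\bA,\bB;\bC)$. Concretely its objects are pairs $(A,B)$; its horizontal and vertical morphisms and its squares are freely generated by the cells of $\bA$ (paired with objects of $\bB$) and of $\bB$ (paired with objects of $\bA$), together with one formal vertically invertible \emph{interchange square} for each pair of cells, modulo relations encoding the double-categorical axioms in each variable and the coherence of the interchange data. By construction a double functor $\bA\otimes_{\mathrm{Gr}}\bB\to\bC$ is the same as a cubical functor, which yields the closedness isomorphism
\[
\dblcat(\bA\otimes_{\mathrm{Gr}}\bB,\bC)\;\cong\;\mathrm{Cub}(\bA,\bB;\bC)\;\cong\;\dblcat(\bA,\llbracket\bB,\bC\rrbracket_{\ps}),
\]
natural in all three variables. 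Alternatively, existence of the left adjoint $-\otimes_{\mathrm{Gr}}\bB$ can be obtained abstractly, since $\dblcat$ is locally presentable and $\llbracket\bB,-\rrbracket_{\ps}$ preserves limits and is accessible, so that the adjoint functor theorem applies; the explicit presentation is nonetheless needed for the coherence checks below.

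Third, I would promote $(\otimes_{\mathrm{Gr}},\mathbb{1})$ to a symmetric monoidal structure. The unit is the terminal double category $\mathbb{1}$, with unitors coming from the isomorphisms $\mathbb{1}\times\bB\cong\bB$ at the level of cubical functors. For the associator and the symmetry I would exhibit natural isomorphisms of representing objects by comparing the functors they represent: a cubical functor out of a triple tensor corresponds to a ``tricubical'' functor on $\bA\times\bB\times\bC$ that is separately functorial and carries compatible interchange squares for each pair of variables, and this description is manifestly associative and symmetric. Coherence\textemdash the pentagon, triangle, and hexagon identities\textemdash then reduces, by the Yoneda lemma applied to these universal properties, to equalities of double functors on generating cells, where they follow from the associativity and symmetry of $\times$ together with the symmetric coherence of the interchange squares.

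The main obstacle is the bookkeeping of the interchange data and its coherence rather than any single deep step. Two points require care. First, in the bijection with cubical functors one must match the vertically invertible squares $h_f$ of a horizontal pseudo natural transformation (\cref{def:pseudo_natural_transformations}) and the comparison squares of a vertical pseudo natural transformation with \emph{two separate families} of interchange squares in $\bA\otimes_{\mathrm{Gr}}\bB$\textemdash one governing the interaction of horizontal morphisms and one governing vertical morphisms\textemdash and verify that the conditions of \cref{def:modif_ps} are precisely the relations imposed on mixed squares. Second, establishing symmetry is genuinely a feature of the \emph{pseudo} (rather than lax or oplax) variant: it hinges on the comparison squares being invertible, so that swapping the two factors sends each interchange square to the inverse of its partner, and one must check this is compatible with the associator in order to obtain the hexagon. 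Once these identifications are set up cleanly, the remaining verifications are routine diagram manipulations.
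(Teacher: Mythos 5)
The paper does not actually prove this statement: it is quoted directly from B\"ohm's work (\cite[\S 3]{Bohm}), so there is no in-text argument to compare yours against. Your outline is essentially the construction carried out in that reference (and in Gray's $2$-categorical antecedent): identify double functors into $\llbracket\bB,\bC\rrbracket_{\ps}$ with cubical functors $\bA\times\bB\to\bC$, present $\bA\otimes_{\mathrm{Gr}}\bB$ by generators and interchange cells so that it represents cubical functors, and deduce the unitors, associator, symmetry, and their coherence from a multicubical description together with the Yoneda lemma. One imprecision is worth correcting, since the symmetry argument hinges on it: not all interchange squares are vertically invertible. Unwinding \cref{def:pseudo_natural_transformations}, the comparison cells indexed by a pair of horizontal morphisms (from the pseudo constraint of a horizontal pseudo natural transformation) are vertically invertible, those indexed by a pair of vertical morphisms are horizontally invertible, and the mixed pairs (a horizontal morphism of one factor against a vertical morphism of the other) receive ordinary, not-necessarily-invertible squares\textemdash these last are structural data, not constraints. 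You flag the two-family issue in your final paragraph, but the blanket phrase ``vertically invertible comparison squares, one for each pair of cells'' in your definition of cubical functor should be replaced by this three-way case split before the coherence bookkeeping can be carried out.
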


\begin{rem}Similarly to \cref{rem:upgraded_adjunction_dbl_hom} the isomorphism above extends to an isomoprshim of double categories

$$\llbracket \bA,\llbracket\bB,\bC\rrbracket\rrbracket_{\ps}\cong\llbracket \bA\times\bB,\bC\rrbracket_{\ps}.$$
    
\end{rem}

One of the ways we can associate, a double category to any $2$-category is via the square functor, originally presented in \cite{ehresmann0}.

\begin{defn}
The \emph{square functor} $\Sq\colon \twocat\to \dblcat$ sends a $2$-category $\cA$ to the double category $\Sq\cA$ whose objects are the objects of $\cA$, whose horizontal and vertical morphisms are the morphisms of $\cA$, and whose squares 
\begin{tz}
\node[](1) {$A$}; 
\node[below of=1](2) {$A'$}; 
\node[right of=1](3) {$B$}; 
\node[right of=2](4) {$B'$};

\draw[->] (1) to node[left,la]{$u$} (2); 
\draw[->,pro] (1) to node[above,la]{$f$} (3); 
\draw[->,pro] (2) to node[below,la]{$f'$} (4); 
\draw[->](3) to node[right,la]{$v$} (4); 
 
\node[la] at ($(1)!0.5!(4)$) {$\alpha$};
\end{tz}
are the $2$-morphisms $\alpha\colon vf\Rightarrow f'u$ in $\cA$.
\end{defn}

\subsection{Double categories as internal categories}\label{subsec:dblcats_as_internal} To make some of our constructions more intuitive, it will be useful to see that double categories can be seen as interncal categories to the category $\cat$. In this section we recall the general notion of internal category, and show this concurrence. 

\begin{defn}
An \emph{internal category} $\bA$ to $\cE$ consists of the following data in $\cE$
\begin{rome}
\item two objects $\bA_0,\bA_1$, where $\bA_0$ is called the object of \emph{objects} and $\bA_1$ the object of \emph{morphisms},
\item source and target maps $s,t\colon \bA_1\to \bA_0$, 
\item a composition map $c\colon \bA_1\times_{\bA_0}\bA_1\to \bA_1$ with the pullback defined as depicted below, 
\item an identity map $i\colon \bA_0\to \bA_1$.
\end{rome}
In summary, the data of an internal category in $\cE$ is given by 

\begin{tz}
\node[](1) {$\bA_1\times_{\bA_0}\bA_1$};
\node[right of=1](2) {$\bA_1$};
\node[below of=1](3) {$\bA_1$};
\node[below of=2](4) {$\bA_0$};

\draw[->] (1) to node[above,la]{$\pi_r$} (2);
\draw[->] (1) to node[left,la]{$\pi_l$} (3);
\draw[->] (2) to node[right,la]{$t$} (4);
\draw[->] (3) to node[below,la]{$s$} (4);

\node[right of=2,yshift=-0.75cm,xshift=3cm](1)    {$\bA_1\times_{\bA_0}\bA_1$};
\node[right of=1,xshift=1em](2) {$\bA_1$};
\node[right of=2](3) {$\bA_0$};

\draw[->] (1) to node[above,la]{$\circ$} (2);
\draw[->] ($(2.east)-(0,10pt)$) to node[below,la]{$t$} ($(3.west)-(0,10pt)$);
\draw[->] ($(2.east)+(0,10pt)$) to node[above,la]{$s$} ($(3.west)+(0,10pt)$);
\draw[->] (3) to node[above,la]{$i$} (2);
\end{tz}

In addition, we ask this data to be subject to the commutative diagrams in $\cE$ below, that determine the source and target of identities and composites

\begin{tz}
\node[](1) {$\bA_0$}; 
\node[below of=1](2) {$\bA_1$}; 
\node[left of=2](3) {$\bA_0$};
\node[right of=2](4) {$\bA_0$}; 

\draw[->] (1) to node[left,la]{$i$} (2);
\draw[->] (2) to node[below,la]{$t$} (4);
\draw[d] (1) to (4);
\draw[d] (1) to (3);
\draw[->] (2) to node[below,la]{$s$} (3);

\node[right of=1,xshift=3cm](5) {$\bA_1$};
\node[below of=5](6) {$\bA_0$};
\node[right of=5,xshift=.5cm](1) {$\bA_1\times_{\bA_0} \bA_1$}; 
\node[below of=1](3) {$\bA_1$};
\node[right of=1,xshift=.5cm](2) {$\bA_1$}; 
\node[below of=2](4) {$\bA_0$}; 

\draw[->] (1) to node[above,la]{$\pi_1$} (2);
\draw[->] (2) to node[right,la]{$t$} (4);
\draw[->] (1) to node[left,la]{$c$} (3);
\draw[->] (3) to node[below,la]{$t$} (4);
\draw[->] (3) to node[below,la]{$s$} (6);
\draw[->] (5) to node[left,la]{$s$} (6);
\draw[->] (1) to node[above,la]{$\pi_0$} (5);
\end{tz}
as well as the commutative diagram below that encodes unitality of the composition

\begin{tz}
\node[](5) {$\bA_1\cong \bA_1\times_{\bA_0} \bA_0$};
\node[right of=5,xshift=2cm](1) {$\bA_1\times_{\bA_0} \bA_1$}; 
\node[below of=1](3) {$\bA_1$};
\node[right of=1,xshift=2cm](2) {$\bA_0\times_{\bA_0}\bA_1\cong \bA_1$}; 

\draw[->] (2) to node[above,la]{$i\times \id_{\bA_1}$} (1);
\draw[d] (2) to (3);
\draw[->] (1) to node[left,la]{$c$} (3);
\draw[d] (5) to (3);
\draw[->] (5) to node[above,la]{$\id_{\bA_1}\times i$} (1);
\end{tz}
 and the one below that encodes associativity of the composition.  
\begin{tz}
\node[](1) {$\bA_1\times_{\bA_0} \bA_1\times_{\bA_0} \bA_1$}; 
\node[below of=1](3) {$\bA_1\times_{\bA_0} \bA_1$};
\node[right of=1,xshift=3cm](2) {$\bA_1\times_{\bA_0} \bA_1$}; 
\node[below of=2](4) {$\bA_1$}; 

\draw[->] (1) to node[above,la]{$c\times \id_{\bA_1}$} (2);
\draw[->] (2) to node[right,la]{$c$} (4);
\draw[->] (1) to node[left,la]{$\id_{\bA_1}\times c$} (3);
\draw[->] (3) to node[below,la]{$c$} (4);
\end{tz}

\end{defn}

\begin{defn}
Let $\bA$ and $\bB$ be internal categories to $\cE$. An \emph{internal functor} $F\colon \bA\to \bB$ consists of maps $F_0\colon \bA_0\to \bB_0$ and $F_1\colon \bA_1\to \bB_1$ in $\cE$ satisfying the following conditions. 
\begin{enumerate}
    \item \emph{Compatibility with source and target}: the following diagrams commute in~$\cE$. 
\begin{tz}
\node[](1) {$\bA_1$}; 
\node[below of=1](3) {$\bB_1$};
\node[right of=1](2) {$\bA_0$}; 
\node[below of=2](4) {$\bB_0$}; 

\draw[->] (1) to node[above,la]{$s$} (2);
\draw[->] (2) to node[right,la]{$F_0$} (4);
\draw[->] (1) to node[left,la]{$F_1$} (3);
\draw[->] (3) to node[below,la]{$s$} (4);

\node[right of=2,xshift=3cm](1) {$\bA_1$}; 
\node[below of=1](3) {$\bB_1$};
\node[right of=1](2) {$\bA_0$}; 
\node[below of=2](4) {$\bB_0$}; 

\draw[->] (1) to node[above,la]{$t$} (2);
\draw[->] (2) to node[right,la]{$F_0$} (4);
\draw[->] (1) to node[left,la]{$F_1$} (3);
\draw[->] (3) to node[below,la]{$t$} (4);
\end{tz}
    \item \emph{Compatibility with identity}: the following diagram commutes in $\cE$. 
\begin{tz}
\node[](1) {$\bA_0$}; 
\node[below of=1](3) {$\bB_0$};
\node[right of=1](2) {$\bA_1$}; 
\node[below of=2](4) {$\bB_1$}; 

\draw[->] (1) to node[above,la]{$i$} (2);
\draw[->] (2) to node[right,la]{$F_1$} (4);
\draw[->] (1) to node[left,la]{$F_0$} (3);
\draw[->] (3) to node[below,la]{$i$} (4);
\end{tz}
    \item 
    \emph{Compatibility with composition}: the following diagram commutes in $\cE$. 
\begin{tz}
\node[](1) {$\bA_1\times_{\bA_0} \bA_1$}; 
\node[below of=1](3) {$\bB_1\times_{\bB_0} \bB_1$};
\node[right of=1,xshift=.5cm](2) {$\bA_1$}; 
\node[below of=2](4) {$\bB_1$}; 

\draw[->] (1) to node[above,la]{$c$} (2);
\draw[->] (2) to node[right,la]{$F_1$} (4);
\draw[->] (1) to node[left,la]{$F_1\times F_1$} (3);
\draw[->] (3) to node[below,la]{$c$} (4);
\end{tz}
\end{enumerate}
\end{defn}

\begin{notation}
We write $\cat (\cE)$ for the category of internal categories to $\cE$ and internal functors.
\end{notation}

\begin{rem}\label{rem:dblcat_internal} A double category is an internal category in $\cat$, as evidenced by the following diagram

\begin{tz}
    \node[](1)    {$\bA_{h,s}\times_{\bA_{o,v}}\bA_{h,s}$};
    \node[right of=1,xshift=2em](2) {$\bA_{h,s}$};
    \node[right of=2](3) {$\bA_{o,v}$};

    \draw[->] (1) to node[above,la]{$\circ$} (2);
    \draw[->] ($(2.east)-(0,10pt)$) to node[below,la]{$c$} ($(3.west)-(0,10pt)$);
    \draw[->] ($(2.east)+(0,10pt)$) to node[above,la]{$d$} ($(3.west)+(0,10pt)$);
    \draw[->] (3) to node[above,la]{$i$} (2);
\end{tz}
in which 
\begin{itemize}
    \item[-] $\bA_{o,v}$ is the category of objects and vertical morphisms, 
    \item[-] $\bA_{h,s}$ is the category of horizontal morphisms and squares,
    \item[-] the functors $d,c\colon\bA_{h,s}\to\bA_{o,v}$ that assign the domain/codomain object for each horizontal morphism, and the domain/codomain vertical morphism for each square,
    \item[-] the functor $i\colon\bA_{o,v}\to\bA_{h,s}$ that assigns the horizontal identity morphism for each object, and the identity square on a vertical morphism for each vertical morphism,
    \item[-] the functor $\bA_{h,s}\times_{\bA_{o,v}}\bA_{h,s}\xrightarrow{\circ}\bA_{h,s}$ given by the composition of horizontal morphisms, and the horizontal composition of squares.
\end{itemize}
\end{rem}

\begin{rem}
   Similarly, we could use the diagram below instead.
   \begin{tz}
    \node[](1)    {$\bA_{v,s}\times_{\bA_{o,h}}\bA_{v,s}$};
    \node[right of=1,xshift=2em](2) {$\bA_{v,s}$};
    \node[right of=2](3) {$\bA_{o,h}$};

    \draw[->] (1) to node[above,la]{$\circ$} (2);
    \draw[->] ($(2.east)-(0,10pt)$) to node[below,la]{$c$} ($(3.west)-(0,10pt)$);
    \draw[->] ($(2.east)+(0,10pt)$) to node[above,la]{$d$} ($(3.west)+(0,10pt)$);
    \draw[->] (3) to node[above,la]{$i$} (2);
\end{tz}
in which 
\begin{itemize}
    \item[-] $\bA_{o,h}$ is the category of objects and horizontal morphisms, 
    \item[-] $\bA_{v,s}$ is the category of vertical morphisms and squares,
    \item[-] the other components of the diagram are analogous to \cref{rem:dblcat_internal}.
\end{itemize}

\end{rem}

\begin{defn}
    Let $\cE$ be a category, we define the opposite operation on $\cat(\cE)$, denoted by $(-)^\mathrm{hop}\colon \cat(\cE)\to\cat{\cE}$ that flips the source and target maps. This is, given $\bA$ an internal category in $\cE$

    \begin{tz}
    \node[](1)    {$\bA_1\times_{\bA_0}\bA_1$};
    \node[right of=1,xshift=1em](2) {$\bA_1$};
    \node[right of=2](3) {$\bA_0$};

    \draw[->] (1) to node[above,la]{$\circ$} (2);
    \draw[->] ($(2.east)-(0,10pt)$) to node[below,la]{$t$} ($(3.west)-(0,10pt)$);
    \draw[->] ($(2.east)+(0,10pt)$) to node[above,la]{$s$} ($(3.west)+(0,10pt)$);
    \draw[->] (3) to node[above,la]{$i$} (2);
    \end{tz}
it gives us the internal category in $\cE$ determined by

    \begin{tz}
    \node[](1)    {$\bA_1\times_{\bA_0}\bA_1$};
    \node[right of=1,xshift=1em](2) {$\bA_1$};
    \node[right of=2](3) {$\bA_0$};

    \draw[->] (1) to node[above,la]{$\circ$} (2);
    \draw[->] ($(2.east)-(0,10pt)$) to node[below,la]{$s$} ($(3.west)-(0,10pt)$);
    \draw[->] ($(2.east)+(0,10pt)$) to node[above,la]{$t$} ($(3.west)+(0,10pt)$);
    \draw[->] (3) to node[above,la]{$i$} (2);
    \end{tz}

\end{defn}

\begin{rem} Let $\bA$ be a double category, seen as an internal category as in \cref{rem:dblcat_internal}. Then given a morphism in $\bA_{h,s}$ as below
    \begin{tz}
    \node[](1) {$A$};
    \node[right of=1](2) {$A'$};
    \node[below of=1](3) {$C$};
    \node[below of=2](4) {$C'$};

    \draw[->,pro] (1) to node[above,la]{$f$} (2);
    \draw[->,pro] (3) to node[below,la]{$g$} (4);
    \draw[->] (1) to node[left,la]{$u$} (3);
    \draw[->] (2) to node[right,la]{$v$} (4);

    \node[la] at ($(1)!0.5!(4)$) {$\alpha$};
    \end{tz}
we have that, in $\bA^\mathrm{hop}$, it is $\mathrm{dom}(f)=A'$,  $\mathrm{cod}(f)=A$, $\mathrm{vdom}(\alpha)=v$,  $\mathrm{vcod}(\alpha)=u$.
\end{rem}

\subsection{Companions in a double category}\label{subsec:companions}
Companion pairs relate in a canonical way horizontal morphisms and vertical morphisms in a double category; see for example \cite{DawsonParePronk} and \cite{GPAdjointsDblCats}.

\begin{defn}
A \emph{companion pair} in a double category $\bA$ is a tuple $(f,u,\varphi,\psi)$ consisting of a horizontal morphism $f\colon A\to B$, a vertical morphism $u\colon A\arrowdot B$, and two squares 
\begin{tz}
\node[](1) {$A$}; 
\node[below of=1](2) {$B$}; 
\node[right of=1](3) {$B$}; 
\node[right of=2](4) {$B$};

\draw[->] (1) to node[left,la]{$u$} (2); 
\draw[->,pro] (1) to node[above,la]{$f$} (3); 
\draw[d,pro] (2) to (4); 
\draw[d](3) to (4); 
 
\node[la] at ($(1)!0.5!(4)$) {$\varphi$};

\node[right of=3,xshift=2cm](1) {$A$}; 
\node[below of=1](2) {$A$}; 
\node[right of=1](3) {$A$}; 
\node[right of=2](4) {$B$};

\draw[->] (3) to node[right,la]{$u$} (4); 
\draw[->,pro] (2) to node[below,la]{$f$} (4); 
\draw[d,pro] (1) to (3); 
\draw[d](1) to (2); 
 
\node[la] at ($(1)!0.5!(4)$) {$\psi$};
\end{tz}
satisfying the following pasting equalities. 
\begin{tz}
\node[](1') {$A$}; 
\node[below of=1'](2') {$A$};
\node[right of=1'](1) {$A$}; 
\node[below of=1](2) {$B$}; 
\node[right of=1](3) {$B$}; 
\node[right of=2](4) {$B$};

\draw[->] (1) to node[left,la]{$u$} (2); 
\draw[->,pro] (1) to node[above,la]{$f$} (3); 
\draw[d,pro] (2) to (4); 
\draw[d](3) to (4); 
 
\node[la] at ($(1)!0.5!(4)$) {$\varphi$};
\draw[->,pro] (2') to node[below,la]{$f$} (2); 
\draw[d,pro] (1') to (1); 
\draw[d](1') to (2'); 
 
\node[la] at ($(1')!0.5!(2)$) {$\psi$};

\node[right of=3](1) {$A$}; 
\node at ($(1)!0.5!(4)$) {$=$};
\node[below of=1](2) {$A$}; 
\node[right of=1](3) {$B$}; 
\node[below of=3](4) {$B$}; 
\draw[d] (1) to (2); 
\draw[d] (3) to (4); 
\draw[->,pro] (1) to node[above,la]{$f$} (3); 
\draw[->,pro] (2) to node[below,la]{$f$} (4); 
\node[la] at ($(1)!0.5!(4)$) {$e_f$};

\node[right of=3,xshift=1cm,yshift=.75cm](1) {$A$}; 
\node[below of=1](2) {$A$}; 
\node[right of=1](3) {$A$}; 
\node[right of=2](4) {$B$};
\node[below of=2](2') {$B$}; 
\node[right of=2'](4') {$B$};

\draw[->] (2) to node[left,la]{$u$} (2'); 
\draw[d,pro] (2') to (4'); 
\draw[d](4) to (4'); 
 
\node[la] at ($(2)!0.5!(4')$) {$\varphi$};

\draw[->] (3) to node[right,la]{$u$} (4); 
\draw[->,pro] (2) to node[above,la]{$f$} (4); 
\draw[d,pro] (1) to (3); 
\draw[d](1) to (2); 
 
\node[la] at ($(1)!0.5!(4)$) {$\psi$};

\node[right of=3,yshift=-.75cm](1) {$A$}; 
\node[below of=1](2) {$B$}; 
\draw[->] (1) to node(a)[left,la]{$u$} (2);
\node at ($(a)!0.5!(4)$) {$=$};
\node[right of=1](3) {$A$}; 
\node[below of=3](4) {$B$}; 
\draw[d,pro] (1) to (3); 
\draw[d,pro] (2) to (4); 
\draw[->] (3) to node[right,la]{$u$} (4);
\node[la] at ($(1)!0.5!(4)$) {$\id_u$};
\end{tz}
We say that a horizontal morphism $f$ (resp.\ a vertical morphism $u$) \emph{has a vertical (resp.\ horizontal) companion} if there is a companion pair $(f,u,\varphi,\psi)$.
\end{defn}

\begin{rem}\label{unicitycompanion} Companions are unique up to unique $2$-isomorophism. See \cite[Remark 4.17]{fibrantly_induced} or \cite[Section 4.1.1]{Grandis}.
\end{rem}

\begin{ex} \label{ex:companions} Given a $2$-category $\cA$, any two isomorphic $1$-cells are companions to each other in $\Sq\cA$. In particular, the square category $\Sq\cA$ of a $2$-category $\cA$ has all companions as it is enough to consider the identity $2$-morphism. 
\end{ex}

\begin{lem}\label{lem:Sq2_free}The double category $\Sq\mathbb{2}$ is the free double category on a companion pair. 
\end{lem}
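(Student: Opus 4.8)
The plan is to establish the defining universal property directly, namely that $\Sq\mathbb 2$ represents the functor sending a double category $\bD$ to the set of companion pairs in $\bD$. First I would describe $\Sq\mathbb 2$ completely. Since $\mathbb 2=\{0\to 1\}$ is a poset, viewed as a locally discrete $2$-category, $\Sq\mathbb 2$ has two objects $0,1$; both its horizontal and its vertical morphisms are the three arrows $\id_0,\id_1,a$ of $\mathbb 2$ (where $a\colon 0\to 1$); and for every admissible boundary there is exactly one square, because a square of $\Sq\mathbb 2$ is a $2$-cell $vf\Rightarrow f'u$ in $\mathbb 2$ and parallel $1$-cells in a poset coincide, forcing a unique filler. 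Enumerating the admissible boundaries, $\Sq\mathbb 2$ has precisely six squares: the identity squares $\square_0,\square_1$; two squares with boundaries $\sq{\varphi}{a}{\id_1}{a}{e_1}$ and $\sq{\psi}{\id_0}{a}{e_0}{a}$; and the squares $e_a$ and $\id_a$.

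Next I would single out the candidate companion pair $(a,a,\varphi,\psi)$, with $a$ taken horizontally and vertically and $\varphi,\psi$ the two squares just named, whose boundaries match those required in the definition of a companion pair. The two pasting identities hold automatically: in each equation both sides are squares of $\Sq\mathbb 2$ with the same boundary, hence equal by uniqueness of fillers. The same computation shows that the horizontal pasting of $\psi$ and $\varphi$ is $e_a$ and that their vertical pasting is $\id_a$; consequently every square of $\Sq\mathbb 2$ is a composite of $\varphi$, $\psi$, and identity squares.

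I would then prove the universal property by exhibiting, for each $\bD$, a bijection $\dblcat(\Sq\mathbb 2,\bD)\cong\{\text{companion pairs in }\bD\}$ given by $F\mapsto(Fa_\hor,Fa_\ver,F\varphi,F\psi)$. This is well defined because a double functor preserves identities and compositions, so the image satisfies the two companion identities. For injectivity I would note that a double functor out of $\Sq\mathbb 2$ is determined by its values on $0,1$, on $a_\hor$, $a_\ver$, and on $\varphi,\psi$, since the remaining cells are identities or composites of these and are therefore forced. For surjectivity, from a companion pair $(f,u,\varphi,\psi)$ in $\bD$ I would define $F$ on objects by $0\mapsto A$, $1\mapsto B$, send $a_\hor\mapsto f$, $a_\ver\mapsto u$, $\varphi\mapsto\varphi$, $\psi\mapsto\psi$, and extend to the remaining cells as forced by preservation of identities.

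The hard part will be checking that this $F$ is a genuine double functor, i.e.\ that it respects all horizontal and vertical composites. Here the thinness of $\Sq\mathbb 2$ is decisive: the only square composites that are not instances of the generic unit, associativity, or interchange laws are the two pastings computing $e_a$ and $\id_a$ from $\psi$ and $\varphi$. Under $F$ these must become the corresponding pastings of $\psi$ and $\varphi$ in $\bD$, and the companion axioms in $\bD$ say exactly that these equal $e_f$ and $\id_u$; every other composite is a unit law, automatically preserved. This verifies functoriality, and naturality of the bijection in $\bD$ is routine, which identifies $\Sq\mathbb 2$ as the free double category on a companion pair.
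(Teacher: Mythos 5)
Your proof is correct and follows the same route as the paper: enumerate the (six) squares of $\Sq\mathbb{2}$ using that $\mathbb{2}$ is a poset, observe that the two companion pasting identities are forced by uniqueness of fillers, and conclude freeness. The only difference is that you carry out the universal-property verification (the bijection $\dblcat(\Sq\mathbb{2},\bD)\cong\{\text{companion pairs in }\bD\}$ and the functoriality check, where the only non-unit composites are exactly the two companion identities) explicitly, whereas the paper leaves this step implicit after the enumeration.
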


\begin{proof} Let us write $f\colon A\to B$ for the only non-trivial $1$-cell in the $2$-category $\mathbb{2}$, and we write $\id_f$ for the identity $2$-cell at $f$. By inspection, we can see that the only squares in $\Sq\mathbb{2}$ are as below

\begin{tz}
\node[](1) {$A$}; 
\node[below of=1](2) {$B$}; 
\node[right of=1](3) {$B$}; 
\node[right of=2](4) {$B$};

\draw[->] (1) to node[left,la]{$f$} (2); 
\draw[->,pro] (1) to node[above,la]{$f^\bullet$} (3); 
\draw[d,pro] (2) to (4); 
\draw[d](3) to (4); 
 
\node[la] at ($(1)!0.5!(4)$) {$\id_f$};

\node[right of=3,xshift=1cm](1) {$A$}; 
\node[below of=1](2) {$A$}; 
\node[right of=1](3) {$A$}; 
\node[right of=2](4) {$B$};

\draw[->] (3) to node[right,la]{$f$} (4); 
\draw[->,pro] (2) to node[below,la]{$f^\bullet$} (4); 
\draw[d,pro] (1) to (3); 
\draw[d](1) to (2); 
 
\node[la] at ($(1)!0.5!(4)$) {$\id_f$};

\node[right of=3,xshift=1cm](1) {$A$}; 
\node[below of=1](2) {$A$}; 
\node[right of=1](3) {$B$}; 
\node[right of=2](4) {$B$};

\draw[d] (1) to (2); 
\draw[->,pro] (1) to node[above,la]{$f^\bullet$} (3); 
\draw[->,pro] (2) to node[below,la]{$f^\bullet$} (4); 
\draw[d](3) to (4); 
 
\node[la] at ($(1)!0.5!(4)$) {$e_f$};

\node[right of=3,xshift=1cm](1) {$A$}; 
\node[below of=1](2) {$B$}; 
\node[right of=1](3) {$A$}; 
\node[right of=2](4) {$B$};

\draw[d,pro] (1) to (3); 
\draw[->] (1) to node[left,la]{$f$} (2); 
\draw[->] (3) to node[right,la]{$f$} (4); 
\draw[d,pro](2) to (4); 
 
\node[la] at ($(1)!0.5!(4)$) {$\id_f$};
\end{tz}
which are forced to verify the pasting equalities in the definition of companions.
\end{proof}

\begin{prop}[{\cite[Proposition 4.20]{fibrantly_induced}}]\label{companionslifting}
Let $\bA$ be a double category. Then, a horizontal morphism $f$ in $\bA$ has a vertical companion if and only if there is a lift in the diagram below left, and a vertical morphism $u$ in $\bA$ has a horizontal companion if and only if there is a lift in the diagram below right. 
\begin{tz}
\node[](1) {$\bH\mathbbm 2$}; 
\node[below of=1](2) {$\Sq\mathbbm 2$}; 
\node[right of=1](1') {$\bA$}; 
\draw[->] (1) to node[above,la]{$f$} (1'); 
\draw[->] (1) to (2); 
\draw[->,dashed] (2) to node[pos=0.4,xshift=3pt,right,la]{$(f,u,\varphi,\psi)$} (1');
\node[right of=1',xshift=2cm](1) {$\bV\mathbbm 2$}; 
\node[below of=1](2) {$\Sq\mathbbm 2$}; 
\node[right of=1](1') {$\bA$}; 
\draw[->] (1) to node[above,la]{$u$} (1'); 
\draw[->] (1) to (2); 
\draw[->,dashed] (2) to node[pos=0.4,xshift=3pt,right,la]{$(f,u,\varphi,\psi)$} (1');
\end{tz}
\end{prop}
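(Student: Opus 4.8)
The plan is to deduce both statements directly from the universal properties of the three free double categories involved, together with the identification of $\Sq\mathbb{2}$ provided by \cref{lem:Sq2_free}. Recall that $\bH\mathbb{2}$ is free on a single horizontal morphism, so that double functors $\bH\mathbb{2}\to\bA$ are in natural bijection with horizontal morphisms of $\bA$; dually, double functors $\bV\mathbb{2}\to\bA$ correspond to vertical morphisms of $\bA$. By \cref{lem:Sq2_free}, $\Sq\mathbb{2}$ is free on a companion pair, so double functors $\Sq\mathbb{2}\to\bA$ are in natural bijection with companion pairs $(f,u,\varphi,\psi)$ in $\bA$.

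First I would pin down the left-hand vertical map $\bH\mathbb{2}\to\Sq\mathbb{2}$: under the bijection above it corresponds to a choice of horizontal morphism in $\Sq\mathbb{2}$, and it is the canonical inclusion picking out the horizontal leg $f^\bullet$ of the generating companion pair (in the notation of the proof of \cref{lem:Sq2_free}). Dually, $\bV\mathbb{2}\to\Sq\mathbb{2}$ picks out the vertical leg $f$ of that same pair.

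With these identifications in hand, the argument is a matter of unwinding the lifting problem. A double functor $\bH\mathbb{2}\to\bA$ classifies a horizontal morphism $f$; a lift against $\bH\mathbb{2}\to\Sq\mathbb{2}$ is a double functor $\Sq\mathbb{2}\to\bA$, i.e.\ a companion pair $(f',u,\varphi,\psi)$ in $\bA$, and commutativity of the lifting triangle says precisely that restricting this companion pair along $\bH\mathbb{2}\to\Sq\mathbb{2}$ recovers the original $f$, forcing $f'=f$. Thus lifts correspond exactly to companion pairs whose horizontal leg is $f$, which is the statement that $f$ has a vertical companion. The right-hand diagram is handled by the transposed argument, using $\bV\mathbb{2}\to\Sq\mathbb{2}$ and the vertical leg of the generating pair.

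The only point requiring care\textemdash and the main, if modest, obstacle\textemdash is verifying that the left vertical maps are indeed the inclusions of the horizontal and vertical legs, and that commutativity of the triangle translates into an \emph{equality} of the classified morphism rather than a weaker condition; both follow from the explicit description of the squares of $\Sq\mathbb{2}$ given in \cref{lem:Sq2_free} and the naturality of the classifying bijections. Everything else is a formal consequence of the correspondence between free generators and the data they classify.
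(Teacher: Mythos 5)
Your proof is correct and is essentially the intended argument: the paper does not reproduce a proof here (it cites \cite[Proposition 4.20]{fibrantly_induced}), but it states \cref{lem:Sq2_free} immediately beforehand precisely so that the lifting property unwinds, via the freeness of $\bH\mathbbm{2}$, $\bV\mathbbm{2}$, and $\Sq\mathbbm{2}$, into the existence of a companion pair with prescribed horizontal (resp.\ vertical) leg. Your identification of the two inclusions into $\Sq\mathbbm{2}$ as picking out the legs $f^\bullet$ and $f$ of the generating companion pair, and your observation that commutativity of the triangle forces equality of the classified morphism, are exactly the points that make the argument go through.
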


\begin{prop}[{\cite[Proposition 4.22]{fibrantly_induced}}]\label{rmk:companionsforequivs}
Let $\bA$ be a double category and let $(u,v,\eta,\varepsilon)$ be a vertical adjoint equivalence in $\bA$. If $u$ and $v$ have horizontal companions, then there is a lift in the following diagram
 \begin{tz}
\node[](1) {$\bV \Eadj$}; 
\node[below of=1](2) {$\Sq \Eadj$}; 
\node[right of=1,xshift=.5cm](1') {$\bA$}; 
\draw[->] (1) to node[above,la]{$(u,v,\eta,\epsilon)$} (1'); 
\draw[->] (1) to (2); 
\draw[->,dashed] (2) to (1');
\end{tz}
\end{prop}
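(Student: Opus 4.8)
The plan is to build the lift $\Sq\Eadj\to\bA$ by hand, prescribing it on the objects, the horizontal and vertical generating morphisms, and the generating squares of $\Sq\Eadj$, and then checking that it respects the double-categorical relations. Any such lift must restrict along $\bV\Eadj\hookrightarrow\Sq\Eadj$ to the classifying double functor of the given vertical adjoint equivalence, so its values on objects, on vertical morphisms, and on squares with trivial horizontal boundary are forced: there it agrees with $(u,v,\eta,\varepsilon)$. The only data left to choose are the images of the horizontal morphisms of $\Sq\Eadj$---which are precisely the $1$-cells of $\Eadj$ read horizontally---and of the squares with nontrivial horizontal boundary.

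First I would send horizontal generators to companions. By hypothesis the vertical morphisms $u$ and $v$ admit horizontal companions $u^\bullet$ and $v^\bullet$; I assign the horizontal copies of $u$ and $v$ to these. Since companion pairs compose and identities are their own companions, every $1$-cell of $\Eadj$---in particular $vu$, $uv$ and their iterates---acquires a horizontal companion in $\bA$, namely the appropriate composite of $u^\bullet$ and $v^\bullet$, together with companion squares obtained by pasting those of $u$ and $v$. Restricted to either of the two copies of $\bV\mathbbm2\hookrightarrow\Sq\mathbbm2$ induced by $u\colon 0\to1$ and $v\colon1\to0$, this is exactly the companion lift provided by \cref{companionslifting}. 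By \cref{ex:companions} and \cref{lem:Sq2_free}, the squares of $\Sq\Eadj$ with nontrivial horizontal boundary are generated, over the vertical $2$-category $\Eadj$, by the companion squares of these horizontal morphisms; I send each such generator to the corresponding pasted companion square $\varphi$, $\psi$ in $\bA$, and extend to all squares by horizontal and vertical pasting.

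The main obstacle is well-definedness: one must check that this assignment respects every relation holding in $\Sq\Eadj$. The companion pasting equalities hold by our choice of $u^\bullet$ and $v^\bullet$, and the interchange and functoriality relations follow formally from those of $\bA$. The delicate point is the interaction of the companion data with the structural invertible $2$-cells $\eta$ and $\varepsilon$ and with the triangle identities of the adjoint equivalence: a square of $\Sq\Eadj$ built by pasting $\eta$ (or $\varepsilon$) against a companion square must be sent to the pasting in $\bA$ of the vertical square $\eta$ (resp.\ $\varepsilon$) with the matching companion square, and one has to verify that the triangle identities, transported through this companion transposition, reproduce exactly the defining equalities of $\Sq\Eadj$.

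To keep these coherences under control I would invoke the uniqueness of companions up to unique vertical isomorphism (\cref{unicitycompanion}): it ensures that the companion of a composite agrees canonically with the composite of companions, so that the two presentations of any square---one routed through the companion of $vu$, the other through $u^\bullet$ and $v^\bullet$ separately---have the same image. Granting this, the triangle-identity check reduces to a pasting computation in $\bA$ mirroring the one valid in $\Sq\Eadj$, and the remaining verifications are routine diagram chases. I expect this transposition-and-coherence bookkeeping to be the genuinely delicate step; the rest is formal.
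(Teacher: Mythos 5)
The paper does not prove this statement itself but defers to \cite[Proposition 4.22]{fibrantly_induced}, and your construction --- sending the horizontal generators to the chosen companions $u^\bullet$, $v^\bullet$, transposing every square of $\Sq\Eadj$ through the (composite) companion squares, and using the uniqueness of companions up to unique isomorphism to control the coherences --- is essentially the argument given in that reference. The proposal is correct; the only refinement I would suggest is that you need not worry about a generators-and-relations presentation of $\Sq\Eadj$: since a square of $\Sq\Eadj$ with boundary $(f,f',a,b)$ \emph{is} the globular $2$-cell $\alpha\colon bf\Rightarrow f'a$, the companion-transposition recipe gives a direct formula for the image of every square, and well-definedness reduces to the (routine but necessary) check that this formula is functorial for horizontal and vertical pasting.
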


\subsection{Conjoints in a double category} Companion pairs encode the notion of adjointness between a horizontal morphism and a vertical morphism in a double category. 

\begin{defn}
A \emph{conjoint pair}, or conjunction, in a double category $\bA$ is a tuple $(f,u,\varepsilon,\eta)$ consisting of a horizontal morphism $f\colon B\to A$, a vertical morphism $u\colon A\arrowdot B$, and two squares

\begin{tz}
\node[](1) {$B$}; 
\node[below of=1](2) {$B$}; 
\node[right of=1](3) {$A$}; 
\node[right of=2](4) {$B$};

\draw[d] (1) to (2);
\draw[->,pro] (1) to node[above,la]{$f$} (3); 
\draw[d,pro] (2) to (4); 
\draw[->](3) to node[right,la]{$u$} (4); 
 
\node[la] at ($(1)!0.5!(4)$) {$\varepsilon$};

\node[right of=3,xshift=2cm](1) {$A$}; 
\node[below of=1](2) {$B$}; 
\node[right of=1](3) {$A$}; 
\node[right of=2](4) {$A$};

\draw[d] (3) to (4); 
\draw[->,pro] (2) to node[below,la]{$f$} (4); 
\draw[d,pro] (1) to (3); 
\draw[->](1) to node[left,la]{$u$} (2); 
 
\node[la] at ($(1)!0.5!(4)$) {$\eta$};
\end{tz}

satisfying the following pasting equalities. 
\begin{tz}
\node[](1') {$B$}; 
\node[below of=1'](2') {$B$};
\node[right of=1'](1) {$A$}; 
\node[below of=1](2) {$B$}; 
\node[right of=1](3) {$A$}; 
\node[right of=2](4) {$A$};

\draw[->] (1) to node[left,la]{$u$} (2); 
\draw[d,pro] (1) to (3); 
\draw[->,pro] (2) to node[below,la]{$f$} (4); 
\draw[d](3) to (4); 
 
\node[la] at ($(1)!0.5!(4)$) {$\eta$};

\draw[d,pro] (2') to (2); 
\draw[->,pro] (1') to node[above,la]{$f$} (1); 
\draw[d](1') to (2'); 
 
\node[la] at ($(1')!0.5!(2)$) {$\varepsilon$};

\node[right of=3](1) {$B$}; 
\node at ($(1)!0.5!(4)$) {$=$};
\node[below of=1](2) {$B$}; 
\node[right of=1](3) {$A$}; 
\node[below of=3](4) {$A$}; 
\draw[d] (1) to (2); 
\draw[d] (3) to (4); 
\draw[->,pro] (1) to node[above,la]{$f$} (3); 
\draw[->,pro] (2) to node[below,la]{$f$} (4); 
\node[la] at ($(1)!0.5!(4)$) {$e_f$};

\node[right of=3,xshift=1cm,yshift=.75cm](1) {$A$}; 
\node[below of=1](2) {$B$}; 
\node[right of=1](3) {$A$}; 
\node[right of=2](4) {$A$};
\node[below of=2](2') {$B$}; 
\node[right of=2'](4') {$B$};

\draw[d] (2) to (2'); 
\draw[d,pro] (2') to (4'); 
\draw[->](4) to node[left,la]{$u$} (4'); 
 
\node[la] at ($(2)!0.5!(4')$) {$\varepsilon$};

\draw[d] (3) to (4); 
\draw[->,pro] (2) to node[above,la]{$f$} (4); 
\draw[d,pro] (1) to (3); 
\draw[->](1)to node[left,la]{$u$} (2); 
 
\node[la] at ($(1)!0.5!(4)$) {$\eta$};

\node[right of=3,yshift=-.75cm](1) {$A$}; 
\node[below of=1](2) {$B$}; 
\draw[->] (1) to node(a)[left,la]{$u$} (2);
\node at ($(a)!0.5!(4)$) {$=$};
\node[right of=1](3) {$A$}; 
\node[below of=3](4) {$B$}; 
\draw[d,pro] (1) to (3); 
\draw[d,pro] (2) to (4); 
\draw[->] (3) to node[right,la]{$u$} (4);
\node[la] at ($(1)!0.5!(4)$) {$\id_u$};
\end{tz}
We say that a horizontal morphism $f$ (resp.\ a vertical morphism $u$) \emph{has a vertical (resp.\ horizontal) conjoint} if there is a conjoint pair $(f,u,\varphi,\psi)$.
\end{defn}

\begin{ex}
    Let $\cA$ be a $2$-category. In the double category $\Sq(\cA)$ the conjoint pairs correspond to adjunctions in $\cA$ with $u$ the left adjoint and $f$ the right adjoint. 
\end{ex}

\begin{lem}\label{free_conjoint}The double category $\Sq\mathbb{2}^\hop$ is the free double category on a conjoint pair. 
\end{lem}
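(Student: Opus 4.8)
The plan is to avoid a second hands-on inspection and instead deduce the statement from \cref{lem:Sq2_free} by exploiting the horizontal-opposite duality $(-)^\hop\colon\dblcat\to\dblcat$. The first thing I would record is that, since $(-)^\hop$ merely flips the horizontal source and target maps, applying it twice returns the original double category; hence $(-)^\hop$ is an involutive isomorphism of $\dblcat$, and in particular it induces bijections $\dblcat(\bX,\bY)\cong\dblcat(\bX^\hop,\bY^\hop)$, $F\mapsto F^\hop$, natural in both variables.

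The technical heart of the argument is the claim that a conjoint pair in a double category $\bA$ is precisely a companion pair in $\bA^\hop$. To see this I would check that the horizontal reflection $(-)^\hop$ carries the defining data of a companion pair $(f,u,\varphi,\psi)$ in $\bA^\hop$ to the defining data of a conjoint pair $(f,u,\varepsilon,\eta)$ in $\bA$. Comparing boundaries, the reflected square $\varphi$ acquires top $f$, horizontal identity on the bottom, a vertical identity on the left and $u$ on the right, which is exactly the shape of $\varepsilon$, and likewise $\psi$ reflects to $\eta$; here the reversal of horizontal morphisms under $(-)^\hop$ accounts for the fact that $f$ points in opposite directions in the two definitions. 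It then remains to match the two pasting equalities: vertical composition is unchanged by $(-)^\hop$, so the companion identity $\psi\bullet\varphi=\id_u$ transports to the conjoint identity $\eta\bullet\varepsilon=\id_u$, while horizontal composition has its order reversed, so the companion identity (the horizontal composite of $\psi$ and $\varphi$ equals $e_f$) transports to the conjoint identity (the horizontal composite of $\varepsilon$ and $\eta$ equals $e_f$) with the two squares swapping sides. This is the step I expect to require the most care, as one must track which boundary of each square maps where and how each of the two compositions behaves under the reflection.

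With these two facts in hand the conclusion is formal. For every double category $\bA$ I would assemble the chain of bijections
\[ \dblcat(\Sq\mathbb{2}^\hop,\bA)\cong \dblcat(\Sq\mathbb{2},\bA^\hop)\cong\{\text{companion pairs in }\bA^\hop\}\cong\{\text{conjoint pairs in }\bA\}, \]
where the first isomorphism is the duality bijection above applied with $\bX=\Sq\mathbb{2}^\hop$ and $\bY=\bA$ (using $(\Sq\mathbb{2}^\hop)^\hop=\Sq\mathbb{2}$), the second is the universal property of \cref{lem:Sq2_free}, and the third is the correspondence established in the previous paragraph. Since every step is natural in $\bA$, this exhibits $\Sq\mathbb{2}^\hop$ as the representing object for conjoint pairs, i.e.\ as the free double category on a conjoint pair. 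Equivalently, one could phrase the argument concretely by applying $(-)^\hop$ to the universal companion pair of $\Sq\mathbb{2}$ and checking it is universal by the same duality. A purely computational alternative, mirroring the proof of \cref{lem:Sq2_free}, would be to list the four squares of $\Sq\mathbb{2}^\hop$ directly and verify that they are forced to assemble into a conjoint pair; but the duality argument is cleaner and reuses the work already done.
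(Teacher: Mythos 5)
Your proposal is correct and is essentially the paper's own argument: the paper proves this lemma in one line by deducing it from \cref{lem:Sq2_free} via horizontal duality, which is exactly the route you take (your write-up just makes explicit the involutivity of $(-)^\hop$ and the correspondence between companion pairs in $\bA^\hop$ and conjoint pairs in $\bA$ that the paper leaves to the reader).
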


\begin{proof}
It follows from \cref{lem:Sq2_free} by observing that they are dual under horizontal duality.
\end{proof}

\begin{prop}\label{conjointslifting}
Let $\bA$ be a double category. Then, a horizontal morphism $f$ in $\bA$ has a vertical conjoint if and only if there is a lift in the diagram below left, and a vertical morphism $u$ in $\bA$ has a horizontal conjoint if and only if there is a lift in the diagram below right. 
\begin{tz}
\node[](1) {$\bH\mathbbm 2$}; 
\node[below of=1](2) {$\Sq\mathbbm 2^\hop$}; 
\node[right of=1](1') {$\bA$}; 
\draw[->] (1) to node[above,la]{$f$} (1'); 
\draw[->] (1) to (2); 
\draw[->,dashed] (2) to node[pos=0.4,xshift=3pt,right,la]{$(f,u,\varphi,\psi)$} (1');
\node[right of=1',xshift=2cm](1) {$\bV\mathbbm 2$}; 
\node[below of=1](2) {$\Sq\mathbbm 2^\hop$}; 
\node[right of=1](1') {$\bA$}; 
\draw[->] (1) to node[above,la]{$u$} (1'); 
\draw[->] (1) to (2); 
\draw[->,dashed] (2) to node[pos=0.4,xshift=3pt,right,la]{$(f,u,\varphi,\psi)$} (1');
\end{tz}
\end{prop}

\subsection{Equipments}

Equipments provide a rich framework in which to express formal category theory, as presented by Wood \cite{Wood} in the context of (weak) $2$-categories. Verity codifies this idea in the context of double categories \cite{Verity}, which gives a more natural and easier to manipulate perspective. See also Shulman's framed bicategories \cite{Shulman_framed_bicategories} and \cite{Shulman_equipments}.

\begin{defn}
A double category $\bA$ is an \emph{equipment} if every vertical morphism has both a companion and a conjoint. 
\end{defn}

\section{Model structure for equipments}\label{ch:equipments_MS}

In this section we construct a model structure on double categories whose fibrant objects are equipments, and whose generating cofibrations are (up to a change in direction) the same of the model structure on \cite{whi}.\footnote{We swap the horizontal and vertical directions with respect to that work, with an eye to future work where we will want to work with pseudo and virtual double categories, having the horizontal direction to be the profunctorial one.}
The main theorem of this section is as follows.

\begin{theorem}\label{MS_equipments} There is a model structure on $\dblcat$ whose fibrant objects are the equipments and whose cofibrations are the $\cI$-cofibrations of \cref{def:cofib}. Moreover, the weak equivalences between equipments are the double biequivalences.
\end{theorem}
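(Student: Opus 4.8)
The plan is to deduce \cref{MS_equipments} from the recognition theorem \cref{thm:fibrantly_generated}, applied with $\C=\dblcat$, which is locally presentable since it is the category of internal categories in $\cat$. I would feed in the following data: the set $\cI$ of generating cofibrations of \cref{def:cofib}, chosen (via the usual boundary inclusions) so that $\cI^\boxslash$ is exactly the class of double functors that are surjective on objects, full on horizontal and on vertical morphisms, and fully faithful on squares; an auxiliary weak factorization system $(\an,\nfib)$ generated by the set $\J=\{\,\bV\mathbb{2}\to\Sq\mathbb{2},\ \bV\mathbb{2}\to\Sq\mathbb{2}^\hop\,\}$ of companion- and conjoint-filling maps; and the class $\Wf$ of double biequivalences (\cref{defn:doublebieq}) between naive fibrant objects. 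By \cref{companionslifting,conjointslifting}, an object has the right lifting property against $\J$ precisely when every vertical morphism admits both a companion and a conjoint, so the naive fibrant objects are exactly the equipments, as required.

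First I would record the purely combinatorial facts. Checking that $\cI^\boxslash$ has the stated description is a direct generator-by-generator lifting computation against the boundary inclusions defining $\cI$. To secure the hypothesis $\an\subseteq {}^\boxslash(\cI^\boxslash)$ it suffices, since ${}^\boxslash(\cI^\boxslash)$ is saturated, to verify that the two generators in $\J$ are $\cI$-cofibrations; this amounts to checking that $\bV\mathbb{2}\to\Sq\mathbb{2}$ and $\bV\mathbb{2}\to\Sq\mathbb{2}^\hop$ lift on the left against every trivial fibration, which follows from the presentations of $\Sq\mathbb{2}$ and $\Sq\mathbb{2}^\hop$ as free on a companion, resp.\ conjoint, pair (\cref{lem:Sq2_free,free_conjoint}) together with the faithfulness on squares of trivial fibrations.

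With the data in place I would verify the five hypotheses. Condition (1), $\cI^\boxslash\subseteq\W$, is the assertion that trivial fibrations are weak equivalences (\cref{prop:trivfibareweSq}): one forms a naive fibrant replacement and checks the resulting comparison is a double biequivalence, using \cref{def:we}. Condition (2) is the 2-out-of-6 property for double biequivalences, verified directly from the 2-out-of-6 behaviour of the underlying local equivalences. For condition (3) I would take $\overline{\cW}$ to be the class of all double biequivalences in $\dblcat^{\mathbb{2}}$ and argue that its defining conditions (essential surjectivity, local fullness, fully faithfulness on squares) cut out an accessible full subcategory, each being expressed through an accessible functor. These three are technical but follow established patterns.

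The crux is conditions (4) and (5). For (4) I would construct, for each equipment $X$, a path object as a pseudo-hom $\Path X=\llbracket \mathbb{E}, X\rrbracket_{\ps}$, where $\mathbb{E}$ is a free-standing vertical adjoint equivalence built from $\Eadj$; the maps $\mathbb{1}\sqcup\mathbb{1}\to\mathbb{E}\to\mathbb{1}$ induce $X\xrightarrow{w}\Path X\xrightarrow{p}X\times X$. One must then show $\Path X$ is again an equipment, that $w$ is a double biequivalence, and---the delicate point---that $p$ is a naive fibration, which is exactly where the equipment structure of $X$ is used to lift companion and conjoint data along the equivalence via \cref{rmk:companionsforequivs}. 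Condition (5), that a naive fibration between equipments which is a double biequivalence lies in $\cI^\boxslash$, I would prove by hand: essential surjectivity combined with the companion/conjoint-lifting afforded by a naive fibration upgrades to honest surjectivity on objects and fullness, while fully faithfulness on squares is read off from the local equivalences. I expect (4)---pinning down the interval $\mathbb{E}$ and proving $p\in\nfib$---to be the main obstacle, since it is precisely here that the pseudo (non-strict) nature of double biequivalences must be reconciled with the strict lifting conditions defining $\nfib$.
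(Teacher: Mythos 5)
Your overall strategy is the paper's: apply \cref{thm:fibrantly_generated} with the set $\cI$ of \cref{def:cofib}, the equipments as naive fibrant objects, $\Wf$ the double biequivalences, and the path object $\llbracket\Sq\Eadj,-\rrbracket_{\ps}$. The genuine gap is in your choice of the auxiliary generating set. You take $\J=\{\bV\mathbbm{2}\to\Sq\mathbbm{2},\ \bV\mathbbm{2}\to\Sq\mathbbm{2}^{\hop}\}$, whereas the paper's $\cJ$ (\cref{def:anodyne}) also contains $\mathbbm{1}\to\bV\Eadj$, $\bH\mathbbm{2}\to\bH\Sigma\bI$, and $\bV\mathbbm{2}\to\bV\Sigma\bI$. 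These three extra generators do not change the naive fibrant objects (they lift trivially against maps to the terminal double category), but they do shrink the class $\nfib$, and that is essential for hypothesis \ref{fibwe}, namely $\nfib\cap\Wf\subseteq\cI^\boxslash$. With your $\J$ this hypothesis is false. Concretely, let $\bI$ denote the free-standing isomorphism regarded as a locally discrete $2$-category; then $\Sq\bI$ is an equipment, and the inclusion $F\colon\mathbbm{1}\to\Sq\bI$ of one of its two objects is a double biequivalence. It also has the right lifting property against both of your generators, since the only companion or conjoint data in $\Sq\bI$ sitting over the identity vertical morphism of $\mathbbm{1}$ is the canonical identity one, which lifts. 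Yet $F$ is not surjective on objects, so it does not lie in $\cI^\boxslash$.

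The step of your argument that breaks is exactly the sentence claiming that ``essential surjectivity combined with the companion/conjoint-lifting afforded by a naive fibration upgrades to honest surjectivity on objects and fullness.'' It does not: to turn (w1) into surjectivity on objects you need naive fibrations to lift vertical equivalences (condition (f1) of \cref{descr:naive_fibrations}, imposed by the generator $\mathbbm{1}\to\bV\Eadj$), and to turn (w2) and (w3') into fullness you need them to lift $2$-isomorphisms in the horizontal and vertical $2$-categories (conditions (f4) and (f5), imposed by $\bH\mathbbm{2}\to\bH\Sigma\bI$ and $\bV\mathbbm{2}\to\bV\Sigma\bI$); this is precisely how \cref{prop:nfib_and_biequiv_is_trivfib} proceeds. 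Once $\cJ$ is enlarged accordingly, the remainder of your outline tracks the paper's proof; the only added cost is that the path-object map $p$ must be checked against this stronger notion of naive fibration, which \cref{nfib_between_equipments} reduces to verifying (f1), (f4), and (f5).
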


The construction of this model structure, both in strategy and technique, is essentially that of \cite[Section 6]{fibrantly_induced}, with some modifications needed to include the \emph{conjoint} requirement for vertical arrows. In consequence, we will skip proofs when the translation is direct, and include some details when it is illustrative.

\subsection{Our starting data} Since we want to use \cref{thm:fibrantly_generated}, we need to provide the data of sought fibrant objects, cofibrations, a weak factorization system $(\an,\nfib)$ generated by a set $\cJ$, and a notion of weak equivalences between the proposed fibrant objects.

\subsubsection{Cofibrations}
We begin by presenting a set of generating cofibrations. 

\begin{defn}\label{def:cofib}
Let $\cI$ be the set containing the following double functors: 
\begin{rome}
\item the unique morphism $\emptyset\to \mathbbm{1}$, 
\item the inclusion $\mathbbm{1}\sqcup\mathbbm{1}\to \bH\mathbbm{2}$, 
\item the inclusion $\mathbbm{1}\sqcup\mathbbm{1}\to \bV\mathbbm{2}$, 
\item the inclusion $\partial(\bH\mathbbm{2}\times\bV\mathbbm{2})\to \bH\mathbbm{2}\times \bV\mathbbm{2}$ of the boundary of the square,
\item the double functor $\bH\mathbbm{2}\times\bV\mathbbm{2}\sqcup_{\partial(\bH\mathbbm{2}\times\bV\mathbbm{2})}\bH\mathbbm{2}\times\bV\mathbbm{2}\to \bH\mathbbm{2}\times\bV\mathbbm{2}$ sending the two parallel squares to the same square.
\end{rome}

The \emph{cofibrations} are the double functors in $\cof(\I)=^\boxslash(\cI^\boxslash)$. 
\end{defn}

\begin{rem} The reader familiar with the canonical model structures on the categories $\cat$ and $\twocat$ will find these maps to be a natural choice. Further insight on why these are the ones we wish to consider for our purpose is given in \cref{under_triangle_D_Reedy_cofibrant} and \cref{lem:surviving_latching_maps}.
\end{rem}

These are the same set of cofibrations that the model structure in \cite{whi}, and so also the same description of the trivial fibrations is valid. 

\begin{prop}\cite[Proposition 3.9]{whi} \label{prop:trivfib}
A double functor $F\colon \bA\to \bB$ is a trivial fibration if and only if it is surjective on objects, full on horizontal and vertical morphisms, and fully faithful on squares. 
\end{prop}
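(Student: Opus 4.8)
The plan is to use the standard fact that a morphism lies in $\cI^\boxslash$ precisely when it has the right lifting property against each of the five generators of \cref{def:cofib} individually, and then to decode each such lifting condition through the universal properties of the representing double categories. Concretely, a double functor out of $\mathbbm 1$ is the same as a choice of object, out of $\bH\mathbbm 2$ the same as a choice of horizontal morphism, out of $\bV\mathbbm 2$ the same as a choice of vertical morphism, and out of $\bH\mathbbm 2\times\bV\mathbbm 2$ the same as a choice of square (this is the content underlying \cref{rem:upgraded_adjunction_dbl_hom}). Likewise, a double functor out of the boundary $\partial(\bH\mathbbm 2\times\bV\mathbbm 2)$ amounts to a choice of \emph{frame}: a compatible pair of horizontal morphisms and pair of vertical morphisms bounding a would-be square, but without a filler. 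Once these identifications are in place, each generator translates mechanically into one of the stated elementary conditions.

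First I would handle the lower-dimensional generators. A lifting problem against $\emptyset\to\mathbbm 1$ has no top data and selects an object $B\in\bB$; a lift is an object $A\in\bA$ with $FA=B$, so the right lifting property against (i) is exactly surjectivity on objects. A lifting problem against $\mathbbm 1\sqcup\mathbbm 1\to\bH\mathbbm 2$ consists of two objects $A_0,A_1$ of $\bA$ together with a horizontal morphism $g\colon FA_0\arrowdot FA_1$ in $\bB$, and a lift is a horizontal morphism $f\colon A_0\arrowdot A_1$ with $Ff=g$; hence the right lifting property against (ii) is precisely fullness on horizontal morphisms. Symmetrically, the right lifting property against (iii) is fullness on vertical morphisms.

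Next I would treat the square-level generators. A lifting problem against $\partial(\bH\mathbbm 2\times\bV\mathbbm 2)\to\bH\mathbbm 2\times\bV\mathbbm 2$ is a frame in $\bA$ together with a square in $\bB$ whose boundary is the image of that frame, and a lift is a square in $\bA$ with the prescribed frame mapping to the given one; so the right lifting property against (iv) is fullness on squares. A lifting problem against the folding map $\bH\mathbbm 2\times\bV\mathbbm 2\sqcup_{\partial(\bH\mathbbm 2\times\bV\mathbbm 2)}\bH\mathbbm 2\times\bV\mathbbm 2\to\bH\mathbbm 2\times\bV\mathbbm 2$ consists of two parallel squares in $\bA$ sharing a frame whose images in $\bB$ agree, and a lift forces the two squares to coincide; so the right lifting property against (v) is faithfulness on squares. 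Combining, $F\in\cI^\boxslash$ if and only if it has the right lifting property against all five generators, if and only if it is surjective on objects, full on horizontal and vertical morphisms, and fully faithful on squares.

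The step requiring genuine care is the explicit identification of the boundary $\partial(\bH\mathbbm 2\times\bV\mathbbm 2)$ and of the pushout appearing in generator (v): one must check that maps out of the boundary classify frames with no interior filler, and that the codiagonal-type map in (v) glues two copies of $\bH\mathbbm 2\times\bV\mathbbm 2$ exactly along their common frame, so that a lifting problem against it really does present a pair of \emph{parallel} squares whose images are forced to be equal. Everything else is a direct unwinding of these Yoneda-style universal properties, which is why the passage to the elementary conditions of the statement is immediate once the representing objects and their boundaries are pinned down; this is also why the result coincides with \cite[Proposition 3.9]{whi}, the generating cofibrations here agreeing with those of \emph{loc.\ cit.}\ up to the swap of horizontal and vertical directions.
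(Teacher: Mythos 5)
Your argument is correct and is the standard one: the paper itself gives no proof here, simply citing \cite[Proposition 3.9]{whi} after noting that the generating set $\cI$ agrees with the one there, and the proof in \emph{loc.\ cit.}\ is exactly the unwinding you describe, using that $\cI^\boxslash$ coincides with the class of maps having the right lifting property against the five generators and that $\mathbbm 1$, $\bH\mathbbm 2$, $\bV\mathbbm 2$, $\bH\mathbbm 2\times\bV\mathbbm 2$, and $\partial(\bH\mathbbm 2\times\bV\mathbbm 2)$ represent objects, horizontal and vertical morphisms, squares, and frames respectively. The only point you defer\textemdash that $\partial(\bH\mathbbm 2\times\bV\mathbbm 2)$ classifies frames and that the pushout in generator (v) classifies parallel pairs of squares\textemdash is a routine colimit computation, so there is no gap.
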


\subsubsection{Naive fibrant objects}

We now introduce a set of anodyne extensions with the purpose of encoding the data of equipments as naive fibrant objects.

\begin{defn}\label{def:anodyne}
Let $\cJ$ be the set containing the following double functors:
\begin{rome}
\item the inclusion $\mathbbm{1}\to \bV\Eadj$ of one of the two objects, 
\item the inclusion $\bV\mathbbm{2}\to \Sq\mathbbm{2}$,
\item the inclusion $\bV\mathbbm{2}\to \Sq\mathbbm{2}^\hop$ 
\item the inclusion $\bH\mathbbm{2}\to \bH\Sigma\bI$, 
\item the inclusion $\bV\mathbbm{2}\to \bV\Sigma\bI$, 
\end{rome}
The \emph{anodyne extensions} are the double functors in $\cof(\cJ)$.
\end{defn} 

Note that by \cref{companionslifting} and \cref{conjointslifting}, conditions (ii) and (iii) guarantees that the naive fibrant objects will be such that every vertical morphism have both a companion and a conjoint.

We have the following description of the naive fibrations.

\begin{prop}\label{descr:naive_fibrations} A double functor $F\colon\bA\to\bB$ is a naive fibration if and only if it satisfies the following conditions:
\begin{enumerate}
    \item[(f1)] for every object $A\in\bA$ and vertical equivalence $v\colon B\to FA$ in $\bB$, there is a vertical equivalence $u\colon A'\to A$ in $\bA$ such that $Fu=v$,
    \item[(f2)] for every vertical morphism $u\colon A\to C$ in $\bA$ and companion pair $(g,Fu,\chi,\omega)$ in $\bB$, there is a companion pair $(f,u,\varphi,\psi)$ in $\bA$ such that $Ff=g$, $F\varphi=\chi$, and $F\omega=\psi$, 
    \item[(f3)]  for every vertical morphism $u\colon A\to C$ in $\bA$ and conjoint pair $(g,Fu,\varepsilon,\eta)$ in $\bB$, there is a conjoint pair $(f,u,\varphi,\psi)$ in $\bA$ such that $Ff=g$, $F\varepsilon=\chi$, and $F\eta=\psi$, 
    \item[(f4)] for every horizontal morphism $f\colon A\to C$ in $\bA$ and $2$-isomorphism $\beta\colon g\cong~Ff$ in~$\bfH\bB$, there is a $2$-isomorphism $\alpha\colon f'\cong f$ in $\bfH\bA$ such that $(\bfH F)\alpha=\beta$,
    \item[(f5)] for every vertical morphism $u\colon A\to C$ in $\bA$ and $2$-isomorphism $\beta\colon v\cong Fu$ in~$\bfV\bB$, there is a $2$-isomorphism $\alpha\colon u'\cong u$ in $\bfV\bA$ such that $(\bfV F)\alpha=\beta$.
\end{enumerate}
\end{prop}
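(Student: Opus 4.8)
The plan is to reduce the statement to a lifting condition against the generating set $\cJ$ and then to match each generator with one of the five conditions. By construction the naive fibrations form the class $\nfib=\cof(\cJ)^\boxslash$, and since $\cof(\cJ)^\boxslash=\cJ^\boxslash$ for any set $\cJ$, a double functor $F$ is a naive fibration if and only if it has the right lifting property against each of the five inclusions of \cref{def:anodyne}. I would organize the proof as five biconditionals, showing that the right lifting property against the $k$-th generator is equivalent to condition (f$k$). The common mechanism is that in each case both the domain and the codomain of the generator are free double categories: a double functor out of a free double category on some structure is the same datum as a choice of that structure in the target, and the generating inclusion corresponds to forgetting down to a sub-structure. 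Thus a lifting problem for $F$ against such an inclusion unwinds to the demand that, given the sub-structure in $\bA$ and a full structure in $\bB$ extending its image under $F$, one can produce a full structure in $\bA$ lying over the given one.

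I would dispatch the middle three conditions first, since they are the most direct. For (ii) and (iii) I use \cref{lem:Sq2_free} and \cref{free_conjoint}, which identify $\Sq\mathbbm 2$ and $\Sq\mathbbm 2^\hop$ as the free double categories on a companion pair and on a conjoint pair, respectively, while $\bV\mathbbm 2$ is free on a vertical morphism; unwinding the lifting problem against $\bV\mathbbm 2\to\Sq\mathbbm 2$ (resp.\ $\bV\mathbbm 2\to\Sq\mathbbm 2^\hop$) exactly as in \cref{companionslifting} (resp.\ \cref{conjointslifting}) yields (f2) (resp.\ (f3)). For (iv) and (v) I use that $\bH\Sigma\bI$ and $\bV\Sigma\bI$ are the free double categories on, respectively, a horizontal and a vertical $2$-isomorphism in the sense of $\bfH\bA$ and $\bfV\bA$, with $\bH\mathbbm 2\to\bH\Sigma\bI$ and $\bV\mathbbm 2\to\bV\Sigma\bI$ selecting one of the two parallel morphisms; the lifting problem then says precisely that any $2$-isomorphism in $\bfH\bB$ (resp.\ $\bfV\bB$) with codomain $Ff$ (resp.\ $Fu$) lifts to one with codomain $f$ (resp.\ $u$), which is (f4) (resp.\ (f5)).

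The main obstacle is the equivalence of (i) and (f1), because here the generator and the condition do not match on the nose. The inclusion $\mathbbm 1\to\bV\Eadj$ selects one object of the free double category on a vertical adjoint equivalence, so the right lifting property against it literally asserts that every vertical \emph{adjoint} equivalence in $\bB$ based at $FA$ lifts, together with all of its adjunction data, to a vertical adjoint equivalence in $\bA$ based at a chosen lift $A$ of the relevant endpoint. Condition (f1), on the other hand, only asks that the underlying vertical \emph{equivalence} lift. To reconcile the two I would invoke the standard $2$-categorical fact that, inside the vertical $2$-category $\bfV$, every equivalence can be promoted to an adjoint equivalence and the data of such a promotion is essentially unique: given a lift of the underlying equivalence, one transports the unit and counit of the downstairs adjoint equivalence to fill in the remaining data, so that solving the plain-equivalence lifting problem of (f1) suffices to solve the adjoint-equivalence lifting problem coming from the generator; conversely, a solution of the latter restricts to a solution of (f1). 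Care is needed to check that the prescribed lift $A$ of the endpoint is respected and that the transported invertible squares map under $F$ to the given unit and counit. This bookkeeping, together with the promotion argument, is the crux of the proof, while the remaining four equivalences are formal consequences of the freeness identifications above.
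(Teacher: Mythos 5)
Your reduction to the right lifting property against the five generators of $\cJ$, and your treatment of generators (ii)--(v), are in line with what the paper intends (the paper itself records no proof here, deferring to the analogue \cite[Proposition 6.3]{fibrantly_induced}): \cref{companionslifting} and \cref{conjointslifting} make the correspondence with (f2) and (f3) literal, and the freeness of $\bH\Sigma\bI$ and $\bV\Sigma\bI$ does the same for (f4) and (f5). The gap is in your first biconditional. The right lifting property against $\mathbbm{1}\to\bV\Eadj$ is \emph{not} equivalent to (f1) alone, and the ``transport the unit and counit'' step you sketch cannot be carried out with (f1) as the only input. Concretely, given an adjoint equivalence $(v,v',\eta,\varepsilon)$ in $\bfV\bB$ with $v\colon B\to FA$, condition (f1) produces an equivalence $u\colon A'\to A$ with $Fu=v$; promoting $u$ to an adjoint equivalence $(u,w,\eta_0,\varepsilon_0)$ in $\bfV\bA$ and pushing forward gives an adjoint equivalence $(v,Fw,F\eta_0,F\varepsilon_0)$ whose right part $Fw$ is only \emph{isomorphic} to the prescribed $v'$, via the canonical comparison $2$-cell $\gamma\colon v'\cong Fw$. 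Conjugating by $\gamma$ happens downstairs; to produce a strict lift you must replace $w$ by some $u'$ with $Fu'=v'$ together with a $2$-isomorphism $u'\cong w$ lying over $\gamma$, and that is precisely an instance of (f5), not a consequence of the essential uniqueness of adjoint-equivalence promotions.

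That the implication genuinely fails without (f5) is visible already for $2$-functors: let $\cB$ be the $2$-category with one object, one $1$-cell, and $2$-cells the group $\mathbb{Z}/2$, and let $F$ be the unique $2$-functor from the terminal $2$-category. Every equivalence in $\cB$ is an identity and lifts, so the analogue of (f1) holds, yet the adjoint equivalence $(\id,\id,\eta,\eta)$ with $\eta$ nontrivial admits no lift (and indeed the analogue of (f5) fails); applying $\bV$ transports this to $\dblcat$. So the correct organization is not five independent biconditionals but an equivalence between the conjunction of (f1)--(f5) and the lifting property against all of $\cJ$, with (f5) feeding into the argument for the generator $\mathbbm{1}\to\bV\Eadj$; this is how \cite[Proposition 6.3]{fibrantly_induced} and the corresponding lemmas in \cite{MSV} proceed. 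The remaining four equivalences in your proposal are fine as stated.
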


\begin{rem}
Comparing the generating set $\cJ$ in \cref{def:anodyne} with the generating set in \cite[Definition 6.2]{fibrantly_induced}, we see that they are very similar, but change (ii) so that instead of having companions on both directions, we have both companions and conjoints for vertical morphisms. Consequently, our \cref{descr:naive_fibrations} differs from \cite[Proposition 6.3]{fibrantly_induced} \textemdash besides the consistent change of horizontal-vertical direction\textemdash in the content of (f3).
\end{rem}

\begin{prop}\label{equip_naive_fibrant_objects}
    A double category is naive fibrant if and only if it is an equipment. 
\end{prop}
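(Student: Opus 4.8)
The plan is to unwind naive fibrancy into a right lifting property against the generating set $\cJ$ of \cref{def:anodyne}, and then read off each generator one at a time. Since the weak factorization system $(\an,\nfib)$ is generated by $\cJ$, a double category $\bA$ is naive fibrant exactly when the unique double functor $\bA\to\mathbbm{1}$ lies in $\cJ^\boxslash$, i.e.\ when it has the right lifting property against each of the five generators. Because the codomain is terminal, a lifting problem for a generator $j\colon P\to Q$ is simply a double functor $P\to\bA$, and a solution is an extension of it along $j$ to a double functor $Q\to\bA$. So the whole statement reduces to recognising, for each generator, what such extendability says about $\bA$.

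First I would address the two generators that carry the content. For generator (ii), the condition that every double functor $\bV\mathbbm{2}\to\bA$ extends along $\bV\mathbbm{2}\to\Sq\mathbbm{2}$ is, by \cref{companionslifting}, precisely the statement that every vertical morphism of $\bA$ admits a horizontal companion. Dually, for generator (iii), extendability along $\bV\mathbbm{2}\to\Sq\mathbbm{2}^\hop$ is, by \cref{conjointslifting}, precisely the statement that every vertical morphism of $\bA$ admits a horizontal conjoint. Taken together, these two lifting conditions are exactly the defining property of an equipment.

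Next I would check that the remaining generators (i), (iv), and (v) impose no additional constraint, so that they hold for every double category. A double functor $\mathbbm{1}\to\bA$ picks out an object $A$, and the vertical identity $e_A$ at $A$, together with its identity unit and counit, constitutes a vertical adjoint equivalence; this assembles into a double functor $\bV\Eadj\to\bA$ extending the chosen object, so the lift against (i) always exists. Similarly, a double functor $\bH\mathbbm{2}\to\bA$ (resp.\ $\bV\mathbbm{2}\to\bA$) selects a horizontal (resp.\ vertical) morphism $f$, and the identity $2$-isomorphism $\id_f$ on it provides the lift against $\bH\mathbbm{2}\to\bH\Sigma\bI$ (resp.\ $\bV\mathbbm{2}\to\bV\Sigma\bI$); thus (iv) and (v) hold automatically. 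The same conclusions follow by specialising \cref{descr:naive_fibrations} to the terminal target, where conditions (f1), (f4), (f5) hold automatically for any $\bA$ while (f2) and (f3) reduce to the existence of companions and conjoints.

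Combining these observations, $\bA\to\mathbbm{1}$ belongs to $\cJ^\boxslash$ if and only if every vertical morphism of $\bA$ has both a companion and a conjoint, which is the desired equivalence. I do not anticipate a genuine obstacle here: the entire argument rests on \cref{companionslifting,conjointslifting}, and the only point demanding a moment's care is verifying that generators (i), (iv), (v) are automatically satisfied, which amounts to exhibiting the trivial vertical adjoint equivalence and the identity $2$-isomorphisms as the required lifts.
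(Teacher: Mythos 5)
Your proof is correct and follows essentially the same route as the paper: the paper likewise deduces the forward direction from \cref{companionslifting} and \cref{conjointslifting} applied to generators (ii) and (iii), and dispatches the converse by noting that the lifts against (i)--(v) exist for any equipment. You simply make explicit the (correct) observation that generators (i), (iv), (v) are lifted automatically by identities in any double category, which the paper leaves as "easy to check."
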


\begin{proof}If the double functor $\bA\to \mathbb{1}$ is a naive fibration, it follows from \cref{companionslifting} and \cref{conjointslifting} that $\bA$ must be an equipment. It is easy to check that if $\bA$ is an equipment then the double functor $\bA\to \mathbb{1}$ has the right lifting property with respect to (i)-(v). 
\end{proof}

\begin{prop}\label{nfib_between_equipments}
    Let $\bA$ be an equipment. A double functor $F\colon\bA\to\bB$ is a naive fibration if and only if it verifies (f1), (f4), (f5).
\end{prop}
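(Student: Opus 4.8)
My plan is to observe that the forward implication is immediate from \cref{descr:naive_fibrations}: every naive fibration satisfies (f1)--(f5), hence in particular (f1), (f4), (f5). So all the real work lies in the converse. For that I would invoke the same characterization in reverse — assuming (f1), (f4), (f5), it suffices to deduce (f2) and (f3), after which (f1)--(f5) all hold and $F$ is a naive fibration. The crucial leverage is the standing hypothesis that $\bA$ is an equipment: every vertical morphism of $\bA$ already has a companion and a conjoint, so the lifting problems in (f2) and (f3) are not asking us to \emph{create} companions/conjoints from nothing, only to match a \emph{prescribed} one in $\bB$. Combined with the uniqueness of companions (\cref{unicitycompanion}) and the lifting of horizontal $2$-isomorphisms supplied by (f4), this should force (f2) and (f3) for free.

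To establish (f2), I would fix a vertical morphism $u\colon A\to C$ in $\bA$ together with a companion pair $(g,Fu,\chi,\omega)$ in $\bB$. Since $\bA$ is an equipment, $u$ has a companion $(f_0,u,\varphi_0,\psi_0)$ in $\bA$, and applying $F$ produces a companion pair $(Ff_0,Fu,F\varphi_0,F\psi_0)$ of $Fu$ in $\bB$. Now $g$ and $Ff_0$ are both companions of $Fu$, so by \cref{unicitycompanion} there is a unique vertically invertible globular square $\beta\colon g\cong Ff_0$ in $\bfH\bB$ compatible with the two companion structures, i.e.\ pasting $\beta$ onto $F\varphi_0,F\psi_0$ recovers $\chi,\omega$. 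I would then feed $\beta$ into (f4): it is a $2$-isomorphism in $\bfH\bB$ with codomain $Ff_0$, exactly the shape that (f4) lifts, yielding $\alpha\colon f'\cong f_0$ in $\bfH\bA$ with $(\bfH F)\alpha=\beta$, and in particular $Ff'=g$. Finally, transporting the companion structure of $f_0$ along $\alpha$ (pasting $\alpha$ and $\alpha^{-1}$ onto $\varphi_0,\psi_0$) gives a companion pair $(f',u,\varphi',\psi')$ in $\bA$, since companions are closed under $2$-isomorphism of their horizontal leg; applying $F$ and using the compatibility built into $\beta$ should give $F\varphi'=\chi$ and $F\psi'=\omega$, which is precisely (f2).

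For (f3) I would argue by horizontal duality rather than repeat the computation. Being an equipment is a self-dual condition under $(-)^{\hop}$, which interchanges companions and conjoints (\cref{free_conjoint}), so $\bA^{\hop}$ is again an equipment; moreover $F^{\hop}\colon\bA^{\hop}\to\bB^{\hop}$ again satisfies (f4), since $(-)^{\hop}$ merely reverses horizontal $1$-cells while preserving the (vertically) invertible globular squares up to inverse, so the lifting property persists. As conjoints of $u$ in $\bA$ are exactly companions of $u$ in $\bA^{\hop}$, condition (f3) for $F$ is literally (f2) for $F^{\hop}$, which the previous paragraph already handles. This closes the converse.

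I expect the main obstacle to be the orientation-and-compatibility bookkeeping around $\beta$. One must check that the comparison $2$-isomorphism produced by \cref{unicitycompanion} is oriented so that (f4) applies without first inverting it, and — more delicately — that after transporting along $\alpha$ the resulting squares $\varphi',\psi'$ map under $F$ to the \emph{prescribed} $\chi,\omega$ on the nose, rather than merely to some other squares exhibiting the same companion. Verifying that the uniqueness clause of \cref{unicitycompanion} pins $\beta$ down tightly enough to guarantee this last equality is the only non-formal point; everything else is a direct pasting computation.
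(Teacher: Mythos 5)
Your proposal is correct and follows essentially the same route as the paper: the paper reduces the statement to showing that (f4) implies (f2) (by the argument you spell out, citing the analogous proof in the reference on fibrantly induced model structures) and then obtains (f3) from (f2) by exactly the horizontal-duality observation you make, noting that (f1), (f4), (f5) are self-dual while (f2) and (f3) are interchanged. The compatibility bookkeeping you flag around $\beta$ is indeed the content of the cited uniqueness of companions, so no gap remains.
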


\begin{proof}

The proof that (f4) implies (f2) is analogous to the proof of \cite[Proposition 6.7]{fibrantly_induced}. 
Now, note that under horizontal duality, we have that axioms (f1), (f4), and (f5) are self-dual, and that (f2) and (f3) are mutually dual. Therefore the statement that (f4) implies (f2) is dual, under horizontal duality, to the statement that (f4) implies (f3).
\end{proof}

\begin{prop}\label{prop:triv_fib__btwn_equipments_is_nfib}Let $\bA$ and $\bB$ be equipments. Then a trivial fibration $F\colon\bA\to\bB$ is a naive fibration. 
\end{prop}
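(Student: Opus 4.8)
The plan is to use \cref{nfib_between_equipments}: since $\bA$ and $\bB$ are both equipments, it suffices to check that $F$ satisfies axioms (f1), (f4), and (f5) of \cref{descr:naive_fibrations}, as the companion and conjoint conditions (f2) and (f3) are then automatic. By \cref{prop:trivfib} the hypothesis that $F$ is a trivial fibration unpacks to: $F$ is surjective on objects, full on horizontal and vertical morphisms, and fully faithful on squares. I will verify each of the three axioms directly from these properties; no equipment structure is used beyond the reduction above.

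For (f4), start with a horizontal morphism $f\colon A\to C$ in $\bA$ and an invertible globular square $\beta\colon g\cong Ff$ in $\bfH\bB$. Fullness on horizontal morphisms lifts $g\colon FA\to FC$ to some $f'\colon A\to C$ with $Ff'=g$. Then $\beta$ fills the image of the frame in $\bA$ with top $f'$, bottom $f$, and vertical sides $e_A$ and $e_C$ (whose images are $e_{FA}$ and $e_{FC}$ since $F$ preserves identities strictly), so full faithfulness on squares yields a unique globular square $\alpha\colon f'\Rightarrow f$ with $F\alpha=\beta$. Applying the same lift to $\beta^{-1}$ produces a candidate inverse, and since $F$ preserves the relevant $2$-cell composition strictly and is faithful on squares, the two composites map to identities and are therefore identities; hence $\alpha$ is a $2$-isomorphism. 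Axiom (f5) is proved by the identical argument with the horizontal and vertical directions exchanged, using fullness on vertical morphisms in place of fullness on horizontal morphisms.

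The main obstacle is (f1), since it asks us to transport an entire equivalence rather than a single invertible $2$-cell. Given $A\in\bA$ and a vertical equivalence $v\colon B\to FA$ in $\bB$, surjectivity on objects provides $A'\in\bA$ with $FA'=B$, and fullness on vertical morphisms lifts $v$ to $u\colon A'\to A$ with $Fu=v$. To see that $u$ is itself a vertical equivalence, I would fix a pseudo-inverse $w\colon FA\to B$ of $v$ together with invertible globular squares witnessing $v\bullet w\cong e_{FA}$ and $w\bullet v\cong e_B$, lift $w$ to $w'\colon A\to A'$ by fullness, and then lift those two invertible squares to invertible globular squares $u\bullet w'\cong e_A$ and $w'\bullet u\cong e_{A'}$ exactly as in the (f4) argument. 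Here the relevant frames in $\bB$ really are strict images of frames in $\bA$ because $F$ preserves vertical composites and vertical identities on the nose, so that $F(u\bullet w')=v\bullet w$ and $F(w'\bullet u)=w\bullet v$. These data exhibit $u$ as a vertical equivalence lifting $v$.

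The remaining points are routine bookkeeping: confirming at each application of full faithfulness on squares that the chosen frame in $\bB$ is the strict image of a frame in $\bA$, and noting that a vertical equivalence requires only a pseudo-inverse together with the two invertible $2$-cells, so that no triangle identities need to be transported.
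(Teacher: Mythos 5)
Your proposal is correct and follows essentially the same route as the paper's proof: reduce via \cref{nfib_between_equipments} to checking (f1), (f4), (f5), and verify each directly from the characterization of trivial fibrations in \cref{prop:trivfib}. The only difference is that you spell out the invertibility arguments (lifting the inverse $2$-cell and using faithfulness on squares) that the paper leaves as ``easy to show.''
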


\begin{proof}
To prove that $F\colon\bA\to\bB$ is a naive fibration we will use the description given in \cref{nfib_between_equipments}. To show (f1), we consider $A$ in $\bA$ and a vertical equivalence $v\colon B\to FA$ in $\bB$. Since $F$ is surjective on objects there exists $A'$ in $\bA$ such that $B=FA'$, therefore since $F$ is full on vertical morphisms and we now have $v\colon FA'\to FA$ in $\bB$, there exists $u\colon B'\to A$ in $\bA$ such that $Fu=v$. Using that $F$ is fully faithful on squares, it is easy to show that $u$ is a vertical equivalence. 

Conditions (f4) and (f5) are analogous; we show (f4). Given a horizontal morphism $u\colon A\arrowdot C$ in $\bA$ and a $2$-isomorphism $\beta\colon g\cong Ff$ in $\bfH\bB$, we first use that $F$ is full on horizontal morphisms to obtain a horizontal morphism $g'$ in $\bA$ such that $Fg'=g$, and then use the fully faithfulness on squares of $F$ with $\sq{\beta}{Fg'}{Ff}{\id}{\id}$. It is easy to show that the resulting square corresponds to a $2$-isomorphism in $\bfH\bA$.

\end{proof}

\subsubsection{Weak equivalences}
Finally, we describe the weak equivalences between fibrant objects. In particular, these are given by the class of weak equivalences of the model structure of \cite{MSV} (up to the change in directions).\footnote{In forthcoming work with Moser and Sarazola, the repeated use of these weak equivalences is put into context and explained.}

We begin by recalling some definitions.

\begin{defn}\label{def:weakly_invertible_square}
Let $\bA$ be a double category. A square $\sq{\alpha}{a}{a'}{u}{u'}$ in $\bA$ is weakly vertically invertible if it is an equivalence in the $2$-category $\bA_{h,s}$. Concretely, if there is a square $\sq{\gamma}{a'}{a}{v}{v'}$ in $\bA$ together with four horizontally invertible squares $\eta,\eta',\varepsilon,\varepsilon'$ as in the following pasting equalities. 

\begin{tz}
    \node[](1) {$A'$};
    \node[right of=1](2) {$A'$};
    \node[right of=2](3) {$B'$};
    \node[below of=2](4) {$A$};
    \node[right of=4](5) {$B$};
    \node[below of=4](7) {$A'$};
    \node[left of=7](6) {$A'$};
    \node[right of=7](8) {$B'$};

    \draw[d,pro] (1) to (2);
    \draw[->,pro] (2) to node[above,la]{$a'$} (3);
    \draw[->,pro] (4) to node[above,la]{$a$} (5);
    \draw[d,pro] (6) to (7);
    \draw[->,pro] (7) to node[below,la]{$a'$} (8);

    \draw[d] (1) to (6);
    \draw[->] (2) to node[left,la]{$v$} (4);
    \draw[->] (3) to node[right,la]{$v'$} (5);
    \draw[->] (4) to node[left,la]{$u$} (7);
    \draw[->] (5) to node[right,la]{$u'$} (8);

    \node[la] at ($(1)!0.5!(7)-(0,5pt)$) {{$\cong$}};
    \node[la] at ($(1)!0.5!(7)+(0,5pt)$) {$\eta$};
    \node[la] at ($(2)!0.5!(5)$) {{$\gamma$}};
    \node[la] at ($(4)!0.5!(8)$) {{$\alpha$}};

    \node[right of=3](1') {$A'$};
    \node[right of=1'](2') {$B'$};
    \node[right of=2'](3') {$B'$};
    \node[below of=3'](5') {$B$};
    \node[below of=5'](8') {$B'$};
    \node[left of=8'](7') {$B'$};
    \node[left of=7'](6') {$A'$};

    \draw[->,pro] (1') to node[above,la]{$a'$} (2');
    \draw[d,pro] (2') to (3');
    \draw[->,pro] (6') to node[below,la]{$a'$} (7');
    \draw[d,pro] (7') to (8');
    \draw[d] (1') to (6');
    \draw[d] (2') to (7');
    \draw[->] (3') to node[right,la]{$v'$} (5');
    \draw[->] (5') to node[right,la]{$u'$} (8');

    \node[la] at ($(1')!0.5!(7')$) {{$e_{a'}$}};
    \node[la] at ($(2')!0.5!(8')-(0,5pt)$) {{$\cong$}};
    \node[la] at ($(2')!0.5!(8')+(0,5pt)$) {$\eta'$};

    \node[la] at  ($(3)!0.5!(6')$) {$=$};

    \node[below of=6](1) {$A$};
    \node[right of=1](2) {$A$};
    \node[right of=2](3) {$B$};
    \node[below of=1](4) {$A'$};
    \node[below of=4](7) {$A$};
    \node[right of=7](8) {$A$};
    \node[right of=8](9) {$B$};

    \draw[d,pro] (1) to (2);
    \draw[->,pro] (2) to node[above,la]{$a$} (3);
    \draw[d,pro] (7) to (8);
    \draw[->,pro] (8) to node[above,la]{$a$} (9);
    \draw[->] (1) to node[left,la]{$u$} (4);
    \draw[->] (4) to node[left,la]{$v$} (7);
    \draw[d] (2) to (8);
    \draw[d] (3) to (9);

    \node[la] at ($(2)!0.5!(9)$) {{$e_{a}$}};
    \node[la] at ($(1)!0.5!(8)-(0,5pt)$) {{$\cong$}};
    \node[la] at ($(1)!0.5!(8)+(0,5pt)$) {$\varepsilon$};

    \node[right of=3](1') {$A$};
    \node[right of=1'](2') {$B$};
    \node[right of=2'](3') {$B$};
    \node[below of=1'](4') {$A'$};
    \node[below of=2'](5') {$B'$};
    \node[below of=4'](7') {$A$};
    \node[right of=7'](8') {$B$};
    \node[right of=8'](9') {$B$};

    \draw[->,pro] (1') to node[above,la]{$a$} (2');
    \draw[d,pro] (2') to (3');
    \draw[->,pro] (4') to node[above,la]{$a'$} (5');
    \draw[->,pro] (7') to node[below,la]{$a$} (8');
    \draw[d,pro] (8') to (9');
    \draw[->] (1') to node[left,la]{$u$} (4');
    \draw[->] (4') to node[left,la]{$v$} (7');
    \draw[->] (2') to node[right,la]{$u'$} (5');
    \draw[->] (5') to node[right,la]{$v'$} (8');
    \draw[d] (3') to (9');

    \node[la] at ($(1')!0.5!(5')$) {$\alpha$};
    \node[la] at ($(4')!0.5!(8')$) {$\gamma$};
    \node[la] at ($(2')!0.5!(9')-(0,5pt)$) {{$\cong$}};
    \node[la] at ($(2')!0.5!(9')+(0,5pt)$) {$\varepsilon'$};

    \node[la] at ($(3)!0.5!(7')$) {$=$};
\end{tz}

We call $\gamma$ a weak inverse of $\alpha$.
\end{defn}

By \cite[Lemma A.1.1]{lyne}, we know that that this weak inverse is unique.

\begin{defn}\label{defn:doublebieq}
A double functor $F\colon \bA\to \bB$ is a \emph{double biequivalence} if it satisfies the following conditions: 
\begin{enumerate}
    \item[(w1)] for every object $B\in \bB$, there is a vertical equivalence $v\colon B\xrightarrow{\simeq} FA$ in $\bB$,
    \item[(w2)] for every pair of objects $A,C\in \bA$ and every vertical morphism $v\colon FA\to FC$ in $\bA$, there is a vertical morphism $u\colon A\to C$ in $\bA$ and a $2$-isomorphism $\beta\colon v\cong Fu$ in $\bfV\bB$,
    \item[(w3)] for every horizontal morphism $f\colon B\arrowdot B'$ in $\bB$, there is a horizontal morphism $g\colon A\arrowdot A'$ in $\bA$, and a weakly vertically invertible square $\beta$ in $\bB$ as below,
\begin{tz}
\node[](1) {$B$}; 
\node[below of=1](3) {$FA$}; 
\node[right of=1](2) {$B'$}; 
\node[right of=3](4) {$FA'$};

\draw[->] (1) to node[left,la]{$\simeq$} (3);
\draw[->,pro] (1) to node[above,la]{$f$} (2); 
\draw[->,pro] (3) to node[below,la]{$Fg$} (4); 
\draw[->](2) to node[right,la]{$\simeq$} (4); 
 
\node[la] at ($(1)!0.5!(4)+(5pt,0)$) {\rotatebox{90}{$\simeq$}};
\node[la] at ($(1)!0.5!(4)-(5pt,0)$) {$\beta$};
\end{tz}
\item[(w4)] it is fully faithful on squares.
\end{enumerate}
\end{defn}

\begin{rem}
It is worth noting that this definition is not symmetric with respect to the vertical and horizontal directions.
\end{rem}

\begin{rem} \label{rem:dblbieq}
The class of double biequivalences satisfies $2$-out-of-$6$ and is accessible when considered as a full subcategory of $\dblcat^{\mathbbm{2}}$, as it is the class of weak equivalences of the combinatorial model structure on $\dblcat$ from \cite{MSV}\textemdash up to the change in directions. The reason we changed directions is because we plan to generalize these results to virtual equipments, and we want the horizontal direction to be the profunctorial direction for diagramatic reasons.
\end{rem}

We set these equivalences to be the ones we want as weak equivalences between fibrant objects.

\begin{defn}
    A double functor $F\colon\bA\to\bB$ between equipments is a \emph{weak equivalence} if it is a double biequivalence.
\end{defn}

\begin{lem}\label{weakequiv_w3}
Let $\bA$ and $\bB$ be equipments. Then for a double functor $F\colon\bA\to \bB$ we obtain an equivalent definition to \cref{defn:doublebieq} by substituting (w3) by the following condition:
\begin{enumerate}
    \item[(w3')] for every pair of objects $A,A'$ in $\bA$ and every horizontal morphism $f\colon FA\arrowdot FA'$ in $\bB$, there is a $2$-isomorphism $\beta\colon f\cong Fg$ in $\bfH\bB$.  
\end{enumerate}
\end{lem}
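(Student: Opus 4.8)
The plan is to fix the three conditions (w1), (w2), (w4) that are common to both formulations and show that, for a double functor $F\colon\bA\to\bB$ between equipments satisfying them, (w3) and (w3') are equivalent. The key tool throughout is \emph{transport of horizontal morphisms along vertical equivalences}, available because $\bB$ is an equipment: given a horizontal morphism $f\colon B\arrowdot B'$ and vertical equivalences $v\colon B\to C$ and $v'\colon B'\to C'$, one forms the horizontal composite $\tilde f:=v'_!\circ f\circ v^*\colon C\arrowdot C'$ using the conjoint $v^*\colon C\arrowdot B$ of $v$ and the companion $v'_!\colon B'\arrowdot C'$ of $v'$. Pasting $f$ with the (co)unit squares of this companion and conjoint produces a square with top $f$, bottom $\tilde f$, and vertical boundaries $v,v'$; since $v,v'$ are vertical equivalences, their companion and conjoint are equivalences in $\bB_{h,s}$, so this comparison square is weakly vertically invertible. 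I record the three facts about this construction that the argument needs: weakly vertically invertible squares are closed under vertical pasting (equivalences compose in $\bB_{h,s}$); they are preserved by any double functor (a functor preserves the defining pasting equalities and sends horizontally invertible squares to horizontally invertible ones, and by \cref{unicitycompanion} it preserves companions and conjoints up to canonical $2$-isomorphism); and a $2$-isomorphism in $\bfH\bB$ is in particular a weakly vertically invertible globular square.

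For the implication (w3') $\Rightarrow$ (w3) I would start from an arbitrary horizontal morphism $f\colon B\arrowdot B'$ in $\bB$, use (w1) to choose objects $A,A'\in\bA$ and vertical equivalences $v\colon B\to FA$, $v'\colon B'\to FA'$, and transport $f$ to $\tilde f\colon FA\arrowdot FA'$ together with a weakly vertically invertible square over $(v,v')$. Applying (w3') to $\tilde f$ yields $g\colon A\arrowdot A'$ in $\bA$ and a $2$-isomorphism $\tilde f\cong Fg$ in $\bfH\bB$; pasting this globular square vertically onto the transport square gives a weakly vertically invertible square with top $f$, bottom $Fg$, and sides $v,v'$, which is exactly the datum required by (w3).

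The harder direction is (w3) $\Rightarrow$ (w3'), and this is where I expect the main obstacle. Given objects $A,A'\in\bA$ and a horizontal morphism $f\colon FA\arrowdot FA'$, applying (w3) produces $g_0\colon A_0\arrowdot A_0'$, vertical equivalences $v\colon FA\to FA_0$, $v'\colon FA'\to FA_0'$, and a weakly vertically invertible square exhibiting $f$ as a transport of $Fg_0$ along $(v,v')$; the issue is that $g_0$ lives over the \emph{wrong} objects $A_0,A_0'$ and the comparison is only weakly invertible rather than a globular $2$-isomorphism. To correct the objects I first lift $v,v'$ to $\bA$: by (w2) there are vertical morphisms $a\colon A\to A_0$ and $a'\colon A'\to A_0'$ with $Fa\cong v$ and $Fa'\cong v'$ in $\bfV\bB$, and I claim these are \emph{vertical equivalences} in $\bA$. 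This claim is the crux: one lifts a weak inverse of $v$ by (w2) to obtain $c\colon A_0\to A$, checks $F(c\bullet a)\cong\id_{FA}$ and $F(a\bullet c)\cong\id_{FA_0}$ in $\bfV\bB$, and then invokes (w4) (fully faithful on squares) to reflect these vertical $2$-isomorphisms back to $\bA$, so that $a$ becomes a vertical equivalence. Transporting $g_0$ along $a,a'$ inside the equipment $\bA$ yields $g\colon A\arrowdot A'$, and because transport is preserved by $F$ (up to $2$-isomorphism in $\bfH\bB$) and is invariant under replacing $Fa,Fa'$ by the isomorphic $v,v'$, the morphism $Fg$ is $2$-isomorphic in $\bfH\bB$ to the transport of $Fg_0$ along $(v,v')$, hence to $f$. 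Since both vertical boundaries of the resulting comparison are isomorphic to identities, uniqueness of weak inverses (\cite[Lemma A.1.1]{lyne}) lets me normalize it to a genuine globular square, producing the required $2$-isomorphism $f\cong Fg$. The main obstacle is thus the combination of reflecting the lifted vertical morphisms to equivalences in $\bA$ via (w4) and tracking the transport through $F$ carefully enough to land on an on-the-nose $2$-isomorphism rather than merely a weakly vertically invertible square.
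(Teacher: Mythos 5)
Your proof is correct and takes essentially the same route as the paper: the paper's proof simply defers to the dual of \cite[Proposition 6.9]{fibrantly_induced} together with the dual of Moser's lemma \cite[Lemma A.2.1]{Moser}, and what you spell out---transporting horizontal morphisms along the companions and conjoints of vertical equivalences, and converting between weakly vertically invertible squares over equivalences and globular $2$-isomorphisms---is precisely the content of those citations. The one imprecision is at the very end: normalizing a weakly vertically invertible square whose vertical boundaries are isomorphic to identities into a genuinely vertically invertible globular square is exactly the (dual) Moser's lemma, obtained by the companion/conjoint pastings you set up earlier, rather than a consequence of uniqueness of weak inverses (\cite[Lemma A.1.1]{lyne}).
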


\begin{proof} The dual proof to \cite[Proposition 6.9]{fibrantly_induced} works, using the dual version of Moser's lemma \cite[Lemma A.2.1]{Moser}.
\end{proof}

\begin{rem}\label{prop:trivfib_btwn_equipments_are_we}It follows from the description \cref{prop:trivfib} and the definition of weak equivalence between fibrant objects given by \cref{weakequiv_w3}, that if $\bA$ and $\bB$ are equipments, then a trivial fibration $F\colon\bA\to\bB$ is weak equivalence. 
\end{rem}

\begin{prop}\label{prop:nfib_and_biequiv_is_trivfib}
    Let $\bA$ and $\bB$ be equipments. Let  $F\colon\bA\to\bB$ be a double functor that is both a double biequivalence and a naive fibration, then it is a trivial fibration. 
\end{prop}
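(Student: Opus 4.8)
The plan is to verify the characterization of trivial fibrations furnished by \cref{prop:trivfib}: it suffices to show that $F$ is surjective on objects, full on horizontal and vertical morphisms, and fully faithful on squares. Since $\bA$ is an equipment and $F$ is a naive fibration, \cref{nfib_between_equipments} lets me assume $F$ satisfies (f1), (f4), and (f5); and since $F$ is a double biequivalence between equipments, \cref{weakequiv_w3} lets me assume $F$ satisfies (w1), (w2), (w3$'$), and (w4). The whole strategy is then to feed each ``existence up to $2$-isomorphism'' statement coming from the biequivalence axioms into the matching lifting statement coming from the naive fibration axioms, thereby upgrading it to an on-the-nose preimage.

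Fully faithfulness on squares is exactly (w4), so nothing is needed there. For surjectivity on objects I would take an object $B\in\bB$ and apply (w1) to obtain an object $A\in\bA$ together with a vertical equivalence $v\colon B\to FA$ in $\bB$; applying (f1) to $v$ then produces a vertical equivalence $u\colon A'\to A$ in $\bA$ with $Fu=v$, whence $FA'=\mathrm{dom}(Fu)=\mathrm{dom}(v)=B$. For fullness on vertical morphisms, given objects $A,C\in\bA$ and a vertical morphism $v\colon FA\to FC$ in $\bB$, condition (w2) yields a vertical morphism $u\colon A\to C$ in $\bA$ and a $2$-isomorphism $\beta\colon v\cong Fu$ in $\bfV\bB$; applying (f5) to $u$ and $\beta$ gives a $2$-isomorphism $\alpha\colon u'\cong u$ in $\bfV\bA$ with $(\bfV F)\alpha=\beta$, and comparing the domains of $(\bfV F)\alpha$ and $\beta$ forces $Fu'=v$, with $u'\colon A\to C$ since $\alpha$ is a $2$-isomorphism and hence $u'$ is parallel to $u$. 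Fullness on horizontal morphisms is entirely analogous: for $f\colon FA\to FA'$ in $\bB$, condition (w3$'$) supplies a horizontal morphism $g\colon A\to A'$ in $\bA$ and a $2$-isomorphism $\beta\colon f\cong Fg$ in $\bfH\bB$, and applying (f4) to $g$ and $\beta$ produces $\alpha\colon g'\cong g$ in $\bfH\bA$ with $(\bfH F)\alpha=\beta$, so that $Fg'=f$ with $g'\colon A\to A'$.

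The only genuine subtlety—and the step I expect to demand the most care—is the bookkeeping of $2$-cells in the last two parts. The biequivalence axioms only guarantee preimages \emph{up to} a $2$-isomorphism, and it is precisely the lifting conditions (f4) and (f5) of the naive fibration that allow one to transport that $2$-isomorphism back into $\bA$ and read off a strict preimage from its domain. What must be checked is that the lifted $2$-isomorphisms are honestly parallel, so that the resulting $u'$ and $g'$ carry the prescribed source and target, and that the equalities $(\bfV F)\alpha=\beta$ and $(\bfH F)\alpha=\beta$ indeed pin the domains down on the nose; both verifications are routine once the conventions for $2$-isomorphisms in $\bfV\bA$ and $\bfH\bA$ are unwound, but they are where the argument genuinely uses the fibrancy of $F$ rather than merely its being a biequivalence.
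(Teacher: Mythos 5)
Your proposal is correct and follows essentially the same route as the paper's own proof: surjectivity on objects via (w1) fed into (f1), fullness on vertical morphisms via (w2) fed into (f5), fullness on horizontal morphisms via the reformulation (w3$'$) of \cref{weakequiv_w3} fed into (f4), and fully faithfulness on squares directly from (w4). The only cosmetic difference is that you route the naive-fibration hypotheses through \cref{nfib_between_equipments}, whereas the paper simply reads (f1), (f4), (f5) off \cref{descr:naive_fibrations}; this changes nothing of substance.
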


\begin{proof}
    Given an object $B$ in $\bB$, we know by (w1) that there is a vertical equivalence $v\colon FA\xrightarrow{\simeq} B$ in $\bB$, and thus by (f1) a vertical equivalence $u\colon A\xrightarrow{\simeq} A'$ in $\bA$ such that $Fu=v$. In particular we have $FA'=B$, which proves that $F$ is surjective on objects. That $F$ is fully faithful on squares is (w4). 

    It remains to show that $F$ is full on both horizontal and vertical morphisms. For vertical morphisms, let $v\colon FA\to FB$ be a vertical morphism in $\bB$. By (w2), there is a $2$-isomorphism $\beta v\cong Fu$ in $\bfV\bB$, and thus by (f5) we know there is a $2$-isomorphism $\alpha\colon u'\cong u$ in $\bfV\bA$ such that $(\bfV F)\alpha=\beta$. In particular, $Fu'=v$ as we wanted. The fullness on horizontal morphisms requires a pivotal step. Given a horizontal morphism $f\colon FA\arrowdot FB$, by \cref{weakequiv_w3}, there is a $2$-isomorphism $\beta\colon f\cong Fg$ in $\bfH\bB$, for some horizontal morphism $g$ in $\bA$. Then, analogously to the vertical case, (f4) gives us what we wanted.
\end{proof}

\begin{cor} Let $\bA$ and $\bB$ be equipments. Then a double functor $F\colon \bA\to \bB$ is a trivial fibration if and only if it is both a double biequivalence and a naive fibration.
\end{cor}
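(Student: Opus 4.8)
The plan is to read this corollary as the conjunction of the three results established immediately above, so that no genuinely new argument is required beyond splitting the biconditional and invoking the appropriate statement in each direction.

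For the forward implication, I would assume $F\colon\bA\to\bB$ is a trivial fibration between equipments and show it is both a naive fibration and a double biequivalence. The first is exactly \cref{prop:triv_fib__btwn_equipments_is_nfib}, whose proof uses the characterization of trivial fibrations in \cref{prop:trivfib} (surjective on objects, full on horizontal and vertical morphisms, fully faithful on squares) together with the reduced description of naive fibrations between equipments in \cref{nfib_between_equipments}. The second follows from \cref{prop:trivfib_btwn_equipments_are_we}: since $\bA$ and $\bB$ are equipments, a weak equivalence between them is by definition a double biequivalence, and a trivial fibration between equipments is such a weak equivalence (via \cref{prop:trivfib} and the criterion (w3') of \cref{weakequiv_w3}). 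Combining the two gives that $F$ is simultaneously a naive fibration and a double biequivalence.

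For the reverse implication, I would observe that this is precisely the content of \cref{prop:nfib_and_biequiv_is_trivfib}: a double functor between equipments that is both a double biequivalence and a naive fibration is a trivial fibration. Hence no further work is needed, and the two implications together yield the stated equivalence.

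There is no real obstacle here, since all the substantive arguments live in the cited propositions; the only point to be careful about is that the identification ``weak equivalence between equipments $=$ double biequivalence'' is definitional, so that the phrasing of \cref{prop:trivfib_btwn_equipments_are_we} and \cref{prop:nfib_and_biequiv_is_trivfib} lines up exactly with the term \emph{double biequivalence} appearing in the statement, and the three results glue without a gap.
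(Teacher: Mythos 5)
Your proposal is correct and matches the paper's proof exactly: the paper likewise deduces the corollary by combining \cref{prop:triv_fib__btwn_equipments_is_nfib} and \cref{prop:trivfib_btwn_equipments_are_we} for the forward direction with \cref{prop:nfib_and_biequiv_is_trivfib} for the converse. No further comment is needed.
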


\begin{proof}
    It follows directly from \cref{prop:nfib_and_biequiv_is_trivfib}, \cref{prop:trivfib_btwn_equipments_are_we}, and \cref{prop:triv_fib__btwn_equipments_is_nfib}.
\end{proof}

\subsection{Existence of the model structure}
It will be useful to give an explicit fibrant replacement. 

\begin{constr} 
Let $\bA$ be a double category. We define $\bA^\fibrant$ to be the double category given by the following pushout in $\dblcat$.
\begin{tz}
\node[](1) {$\bigsqcup_{\mathrm{Ver}(\bA)} \bV\mathbbm{2}$};
\node[below of=1](2) {$\bigsqcup_{\mathrm{Ver}(\bA)} (\Sq\mathbbm{2}\sqcup_{\bV\mathbbm{2}}\Sq\mathbbm{2}^{\mathrm{hop}})$}; 
\node[right of=1,xshift=2.5cm](3) {$\bA$}; 
\node[below of=3](4) {$\bA^\fibrant$}; 
\node at ($(4)-(.4cm,-.4cm)$) {$\ulcorner$};

\draw[->] (1) to (2); 
\draw[->] (1) to (3); 
\draw[->] (2) to (4);
\draw[->] (3) to node[right,la]{$j_\bA$} (4);
\end{tz}
This construction extends to a functor $(-)^\fibrant\colon \dblcat\to \dblcat$. 
\end{constr}

\begin{rem}\label{fib_in2steps}The construction above could have been expressed in two steps: first adding companions for every vertical map and then conjoints for every vertical map, or viceversa. This follows by observing that pushing out along $\bV\mathbbm{2}\to\Sq\mathbbm{2}\to\Sq\mathbbm{2}\sqcup_{\bV\mathbbm{2}}\Sq\mathbbm{2}^{\mathrm{hop}}$ and 
$\bV\mathbbm{2}\to\Sq\mathbbm{2}^{\mathrm{hop}}\to\Sq\mathbbm{2}\sqcup_{\bV\mathbbm{2}}\Sq\mathbbm{2}^{\mathrm{hop}}$ commutes. Informally, this is reasonable, as the pushouts are not adding new vertical morphisms.
\end{rem}

\begin{rem}\label{jA_ff} The double functor $j_\bA\colon \bA\to \bA^\fibrant$ is fully faithful on squares. This follows directly from \cref{fib_in2steps} and \cite[Remark 6.12]{fibrantly_induced}. 
\end{rem}

\begin{prop}
Let $\bA$ be a double category. The double functor $j_\bA\colon \bA\to \bA^\fibrant$ is a naive fibrant replacement of $\bA$. 
\end{prop}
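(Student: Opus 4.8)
The plan is to unwind the definition of a naive fibrant replacement: I must verify that $j_\bA$ is an anodyne extension and that $\bA^\fibrant$ is naive fibrant, the latter meaning, by \cref{equip_naive_fibrant_objects}, that $\bA^\fibrant$ is an equipment.

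For the anodyne extension claim, I would show that the left-hand vertical map of the defining pushout lies in $\cof(\cJ)$ and then invoke the closure properties of the left class of a weak factorization system. Concretely, the map $g\colon\bV\mathbbm{2}\to\Sq\mathbbm{2}\sqcup_{\bV\mathbbm{2}}\Sq\mathbbm{2}^{\mathrm{hop}}$ factors as
\[
\bV\mathbbm{2}\to\Sq\mathbbm{2}\to\Sq\mathbbm{2}\sqcup_{\bV\mathbbm{2}}\Sq\mathbbm{2}^{\mathrm{hop}},
\]
where the first arrow is the generating anodyne extension \cref{def:anodyne}(ii) and the second is the pushout of the generating anodyne extension \cref{def:anodyne}(iii) along $\bV\mathbbm{2}\to\Sq\mathbbm{2}$; since $\cof(\cJ)$ is closed under pushout and composition, $g$ is anodyne. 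The left-hand vertical map of the pushout is the coproduct $\bigsqcup_{\mathrm{Ver}(\bA)}g$, hence anodyne by closure under coproducts, and $j_\bA$ is its pushout, hence anodyne as well.

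For the fibrancy claim, the key preliminary observation is that $j_\bA$ is bijective on objects and on vertical morphisms, so that every vertical morphism of $\bA^\fibrant$ is the image of a unique vertical morphism of $\bA$. To prove this I would use the description of double categories as internal categories to $\cat$ (\cref{rem:dblcat_internal}) together with the functor $(-)_{o,v}\colon\dblcat\to\cat$ sending a double category to its category of objects and vertical morphisms; as the object-of-objects functor of internal categories it admits a right adjoint (the codiscrete internal category), and hence preserves colimits. Each of $\bV\mathbbm{2}\to\Sq\mathbbm{2}$ and $\bV\mathbbm{2}\to\Sq\mathbbm{2}^{\mathrm{hop}}$ is an isomorphism after applying $(-)_{o,v}$ (the horizontal opposite leaves objects and vertical morphisms untouched), so $g$ is too, and applying the colimit-preserving $(-)_{o,v}$ to the defining pushout shows that $j_\bA$ induces an isomorphism $\bA_{o,v}\cong(\bA^\fibrant)_{o,v}$. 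Then, fixing a vertical morphism $u$ of $\bA$ with classifying map $\bV\mathbbm{2}\to\bA$, which is a component of the top map of the pushout, the composite $\Sq\mathbbm{2}\to\Sq\mathbbm{2}\sqcup_{\bV\mathbbm{2}}\Sq\mathbbm{2}^{\mathrm{hop}}\to\bA^\fibrant$ exhibits a lift in the companion diagram of \cref{companionslifting}, and the analogous composite through $\Sq\mathbbm{2}^{\mathrm{hop}}$ exhibits a lift in the conjoint diagram of \cref{conjointslifting}; thus the image of $u$ acquires both a companion and a conjoint. Since every vertical morphism of $\bA^\fibrant$ arises this way, $\bA^\fibrant$ is an equipment, hence naive fibrant.

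I expect the main obstacle to be the bijective-on-objects-and-vertical-morphisms claim: without it one cannot conclude that the vertical morphisms receiving companions and conjoints exhaust all of $\bA^\fibrant$, and computing the pushout directly in $\dblcat$ is awkward. Routing through the colimit-preserving functor $(-)_{o,v}$ is the clean way around this, and it is where the internal-category viewpoint of \cref{rem:dblcat_internal} does the real work. The remaining verifications\textemdash the factorization of $g$ and the identification of the canonical maps with the lifting problems of \cref{companionslifting} and \cref{conjointslifting}\textemdash are formal, matching the corresponding argument of \cite[Section 6]{fibrantly_induced} up to the change of direction and the addition of the conjoint.
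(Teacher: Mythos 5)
Your proof is correct and follows essentially the same route as the paper: $j_\bA$ is anodyne as a pushout of a coproduct of anodyne extensions, and $\bA^\fibrant$ is an equipment because no new vertical morphisms are created, so every vertical morphism receives a companion and a conjoint through the pushout. The only difference is that you make rigorous, via the colimit-preserving functor $(-)_{o,v}$, the claim that the paper states informally as ``there are no added vertical morphisms''; this is a welcome amount of extra care but not a different argument.
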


\begin{proof}
The double functor $j_\bA$ is an anodyne extension as a pushout of coproducts of anodyne extensions. To see that $\bA^\fibrant$ is an equipment, we observe that given a vertical morphisms in $\bA^\fibrant$, it must be in the image of $\bA$, as there are no added vertical morphisms, and therefore a companion and a conjoint for such a morphism was added through the pushout.
\end{proof}

\begin{prop}  \label{prop:trivfibareweSq}
If $F\colon \bA\to \bB$ is a trivial fibration, then $F^\fibrant\colon \bA^\fibrant\to \bB^\fibrant$ is a trivial fibration between equipments. 
\end{prop}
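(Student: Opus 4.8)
The plan is to verify the four conditions of \cref{prop:trivfib} for $F^\fibrant$---surjectivity on objects, fullness on horizontal and on vertical morphisms, and full faithfulness on squares---noting that the clause ``between equipments'' is already settled, since by the previous proposition together with \cref{equip_naive_fibrant_objects} both $\bA^\fibrant$ and $\bB^\fibrant$ are equipments. The two cheap conditions come first. Because the pushout defining $(-)^\fibrant$ introduces no new objects and no new vertical morphisms (\cref{fib_in2steps}), the functor $F^\fibrant$ agrees with $F$ on objects and on vertical morphisms, so surjectivity on objects and fullness on vertical morphisms are inherited directly from the hypothesis that $F$ is a trivial fibration.

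For the two remaining conditions I would use the two-step description of \cref{fib_in2steps}, viewing $(-)^\fibrant$ as ``freely adjoin a companion to every vertical morphism'', call it $C$, followed by ``freely adjoin a conjoint to every vertical morphism'', call it $K$ (the set of vertical morphisms being unchanged after the first step). Since $K$ is the horizontal dual of $C$ and all four conditions of \cref{prop:trivfib} are invariant under horizontal duality, it is enough to show that $C(F)\colon C(\bA)\to C(\bB)$ is a trivial fibration whenever $F$ is; applying the dual statement to the trivial fibration $C(F)$ then yields that $K(C(F))=F^\fibrant$ is a trivial fibration. This companion step mirrors the corresponding argument in \cite[Section~6]{fibrantly_induced}. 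Fullness on horizontal morphisms is checked factor-by-factor: a horizontal morphism of $C(\bB)$ is a composite of morphisms coming from $\bB$ and of freshly adjoined companions $v_\ast$ of vertical morphisms $v$ of $\bB$; using surjectivity of $F$ on objects I would first pick preimages of all the intermediate objects, then lift each original factor by fullness of $F$ on horizontal morphisms and each companion factor $v_\ast=(Fu)_\ast$ as $u_\ast$ after lifting $v$ to $u$ by fullness of $F$ on vertical morphisms, the compatible choice of preimages guaranteeing that the lifted factors compose.

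The crux---and the step I expect to be the main obstacle---is full faithfulness on squares, which is exactly where a subtle point arises (compare the acknowledgment about an earlier proof of this statement). This requires a genuine description of the squares of $C(\bA)$, which are generated under horizontal and vertical composition---subject to interchange and the companion pasting equalities---by the squares of $\bA$ together with the structure squares $\varphi_u,\psi_u$ of each adjoined companion. Granting such a description, a square of $C(\bB)$ with a prescribed boundary can be decomposed into pieces that are either images of squares of $\bA$, lifted by full faithfulness of $F$ on squares, or structure squares $\varphi_{Fu},\psi_{Fu}$, lifted to $\varphi_u,\psi_u$ after lifting the relevant vertical morphism; injectivity for a fixed boundary then rests on $F$ being faithful on the squares of $\bA$ together with the rigidity of companions (\cref{unicitycompanion}). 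The delicate part is precisely that the square structure of the pushout is \emph{not} freely generated: the companion (and, in the second step, conjoint) structure squares obey nontrivial pasting relations, so one must verify that two formal composites which are identified in $C(\bA)$ are also identified in $C(\bB)$ and conversely. Controlling this bookkeeping, rather than treating the squares as freely generated, is the heart of the argument.
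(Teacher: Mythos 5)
Your reduction to the companion step via horizontal duality, and your treatment of objects, vertical morphisms, and fullness on horizontal morphisms, are all reasonable and consistent with \cref{fib_in2steps}. The problem is that the condition you yourself single out as ``the crux''\textemdash full faithfulness on squares\textemdash is exactly the condition you do not prove. Your argument begins ``granting such a description'' of the squares of $C(\bA)$ and ends by observing that the square structure of the pushout is not freely generated and that controlling the resulting identifications is ``the heart of the argument.'' That bookkeeping is the entire content of the proposition: without an explicit presentation of the squares of these pushouts and a verification that the identifications imposed by the companion (and conjoint) pasting equalities are both created and reflected by $C(F)$, neither fullness nor faithfulness on squares follows from what you have written. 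This is not a pedantic complaint\textemdash the acknowledgments record that an earlier direct proof of precisely this statement contained a mistake at this point, and the repair required a separate careful analysis carried out in \cite{MSVdouble_equivalences}.

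The paper avoids the combinatorics entirely. It invokes \cite[Proposition 4.10]{MSVdouble_equivalences}, which shows that these trivial fibrations (surjective on objects, full on horizontal and vertical morphisms, fully faithful on squares) are stable under pushout along the $\cI$-cofibrations. It then forms the pushout $P=\bB\sqcup_{\bA}\bA^{\fibrant}$ of $F$ along the anodyne extension (hence cofibration) $j_\bA$, concludes that $\bA^{\fibrant}\to P$ is a trivial fibration, and checks that $j\colon\bB\to P$ is a naive fibrant replacement of $\bB$, using \cite[Lemma 6.3]{MSVdouble_equivalences} to see that $P$ is an equipment; together with \cref{prop:trivfib_btwn_equipments_are_we} this yields what is needed downstream. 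If you want to keep your direct approach, you must either import the explicit pushout description from \cite{MSVdouble_equivalences} or reprove it; as written, the key step is asserted rather than established.
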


\begin{proof} As we have mentioned before, our trivial fibrations (and in consequence cofibrations) are exactly those of the model structure in \cite{whi}, or the canonical trivial fibrations in \cite{MSVdouble_equivalences}. Therefore, \cite[Proposition 4.10]{MSVdouble_equivalences} implies that our trivial fibrations are stable under pushouts along our cofibrations. 

In view of the above together with \cref{prop:trivfib_btwn_equipments_are_we}, it is enough to show that when we consider the pushout 
\begin{tz}
\node[] (1) {$\bA$};
\node[right of=1](2) {$\bB$};
\node[below of=1](3) {$\bA^\fibrant$};
\node[right of=3](4) {$P$};

\draw[->] (1) to node[left,la]{$j_\bA$} (3);
\draw[->] (1) to node[above,la]{$F$} (2);
\draw[->] (2) to node[right,la]{$j$} (4);
\draw[->] (3) to (4);
\end{tz}
we know that $j\colon\bB\to P$ is a fibrant replacement of $\bB$. Firstly, $j$ is an anodyne extension as they are closed under pushouts, since they are the left class of a weak factorization system. That $P$ is an equipment follows from the fact that we know the map $\bA^\fibrant\to P$ is a trivial fibrations and \cite[Lemma 6.3]{MSVdouble_equivalences}.
\end{proof}

The above proposition together with \cref{prop:trivfib_btwn_equipments_are_we} guarantee the following.

\begin{cor}\label{cor:trivfib_are_we}Every trivial fibration is a weak equivalence.
\end{cor}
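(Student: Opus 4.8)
The plan is to produce, for a given trivial fibration $F\colon\bA\to\bB$, an explicit naive fibrant replacement whose bottom edge lands in $\Wf$; by \cref{def:we} this is exactly what is needed to conclude that $F$ is a weak equivalence. The natural candidate is the naturality square of the fibrant-replacement functor $(-)^\fibrant$, with top edge $F$, vertical legs $j_\bA$ and $j_\bB$, and bottom edge $F^\fibrant\colon\bA^\fibrant\to\bB^\fibrant$.

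First I would check that this square is a legitimate naive fibrant replacement of $F$. By the preceding proposition, $j_\bA$ and $j_\bB$ are anodyne extensions (being pushouts of coproducts of maps in $\cJ$), and their targets $\bA^\fibrant$ and $\bB^\fibrant$ are equipments, hence naive fibrant by \cref{equip_naive_fibrant_objects}; this is precisely the data required of a naive fibrant replacement in the sense of \cref{def:we}. It then remains only to identify the bottom map as an element of $\Wf$. For this I would invoke \cref{prop:trivfibareweSq}, which guarantees that $F^\fibrant$ is a trivial fibration between equipments, followed by \cref{prop:trivfib_btwn_equipments_are_we}, which upgrades any trivial fibration between equipments to a double biequivalence, i.e.\ a morphism of $\Wf$. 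Feeding this into \cref{def:we} yields that $F$ is a weak equivalence.

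I do not anticipate a genuine obstacle, since all the substantive content has already been discharged in the cited results: the preservation of trivial fibrations under the fibrant-replacement functor, and the fact that trivial fibrations between equipments are double biequivalences. The only point requiring a moment's care is confirming that the naturality square satisfies every clause in the definition of a naive fibrant replacement, but this is immediate from the anodyne-extension property of $j_\bA$ and $j_\bB$ together with the fibrancy of their codomains. In effect, the corollary is an assembly of \cref{prop:trivfibareweSq} and \cref{prop:trivfib_btwn_equipments_are_we} through the mechanism of \cref{def:we}.
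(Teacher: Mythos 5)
Your argument is correct and is precisely the paper's own (very terse) proof: the corollary is stated as an immediate consequence of \cref{prop:trivfibareweSq} and \cref{prop:trivfib_btwn_equipments_are_we}, with the naturality square of $(-)^\fibrant$ serving as the naive fibrant replacement required by \cref{def:we}. Your additional care in checking that $j_\bA$, $j_\bB$ are anodyne extensions with naive fibrant codomains only makes explicit what the paper leaves implicit.
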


\subsubsection{Path object}

In order to use \cref{thm:fibrantly_generated}, we want to show the existence of a path object for every equipment. The same candidate that we used in \cite[Section 6]{fibrantly_induced} works. Thus, for any equipment $\bA$, we consider 
\[\bA\to \llbracket\Sq\Eadj,\bA\rrbracket_{\mathrm{ps}} \to \bA\times \bA. \]
induced by the double functor $\mathbbm{1}\sqcup \mathbbm{1}\to \Sq\Eadj\to \mathbbm{1}$.

The proof that this is indeed a path objects follows the same strategy as where it was introduced, adapted to the case of equipments. We start by proving the following.

\begin{prop}\label{path_object_equipment} For any equipment $\bA$, the pseudo hom double category $\llbracket\Sq\Eadj,\bA\rrbracket_{\mathrm{ps}}$ is an equipment as well.
\end{prop}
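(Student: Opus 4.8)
The plan is to verify the equipment condition directly, by taking an arbitrary vertical morphism of $\llbracket\Sq\Eadj,\bA\rrbracket_{\ps}$ — that is, a vertical pseudo natural transformation $r\colon F\Rightarrow G$ between double functors $F,G\colon\Sq\Eadj\to\bA$ — and constructing both a horizontal companion and a horizontal conjoint for it. The guiding principle is that all of the required structure can be assembled \emph{pointwise} from the equipment structure of $\bA$, with the uniqueness of companions (\cref{unicitycompanion}) supplying the comparison isomorphisms that glue the pointwise data into a genuine pseudo natural transformation.

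Concretely, for the companion I would proceed as follows. For each object $X$ of $\Sq\Eadj$ the vertical morphism $r_X\colon FX\to GX$ of $\bA$ admits a companion $(r_X,r_X^\bullet,\varphi_X,\psi_X)$ since $\bA$ is an equipment. I set $r^\bullet_X:=r_X^\bullet$ and build a horizontal pseudo natural transformation $r^\bullet\colon F\Rightarrow G$ on these components. The remaining data of $r^\bullet$ — a square for every vertical morphism of $\Sq\Eadj$, and a vertically invertible square for every horizontal morphism — is produced by taking the corresponding structure squares of $r$ and transposing them across the companion squares $\varphi_{(-)},\psi_{(-)}$, i.e.\ by forming the evident mates. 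The companion squares $\varphi,\psi$ of $r^\bullet$ in the pseudo hom are then the modifications whose components at $X$ are $\varphi_X$ and $\psi_X$; their two pasting equalities are equalities of modifications, so they may be checked objectwise, where they hold because each $(r_X,r_X^\bullet,\varphi_X,\psi_X)$ is a companion pair in $\bA$.

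The conjoint is produced by the completely analogous dual construction: each $r_X$ also has a conjoint in $\bA$, and assembling these — now transposing via the conjoint unit and counit squares — yields a horizontal pseudo natural transformation that serves as the conjoint of $r$, with the conjoint pasting equalities again verified objectwise. One may also package this via \cref{companionslifting,conjointslifting}, which reduce the existence of companions and conjoints to solving lifting problems against $\bV\mathbbm{2}\to\Sq\mathbbm{2}$ and $\bV\mathbbm{2}\to\Sq\mathbbm{2}^{\hop}$ respectively, the latter being the horizontal dual of the former.

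I expect the main obstacle to be the middle step: checking that the objectwise companions genuinely cohere into a \emph{pseudo} natural transformation. One must verify that the transposed squares are natural with respect to the squares of $\Sq\Eadj$, are functorial in both horizontal and vertical composites, and that the squares assigned to horizontal morphisms are vertically invertible. All three are consequences of the uniqueness of companions up to unique vertical isomorphism (\cref{unicitycompanion}): this uniqueness forces the comparison squares relating the various pointwise companions to be invertible and to satisfy the coherence identities automatically, so that no genuinely new computation is needed beyond transporting the corresponding properties of $r$ along the companion data. The rest of the argument is bookkeeping that reduces, object by object, to the equipment axioms of $\bA$.
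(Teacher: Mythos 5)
Your overall strategy coincides with the paper's: the paper likewise builds the required companion/conjoint of a vertical pseudo natural transformation $\lambda\colon F\Rightarrow G$ by choosing pointwise companions/conjoints of the components $\lambda_0,\lambda_1$ in $\bA$, assembling the remaining data as explicit mates (pastings with the unit/counit squares), and checking the two companion/conjoint pasting equalities objectwise. So the skeleton of your argument is the right one.

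However, there is a genuine gap at exactly the step you flag as ``the main obstacle,'' and the tool you invoke to close it does not do the job. The data of a horizontal \emph{pseudo} natural transformation requires the square assigned to each horizontal morphism $f$ of $\Sq\Eadj$ to be \emph{vertically invertible}, and the mate of the strict naturality square $\lambda_f$ across the pointwise companion/conjoint data is not invertible for any general reason: uniqueness of companions up to unique vertical isomorphism (\cref{unicitycompanion}) compares two companions \emph{of the same vertical morphism}, and the two horizontal composites $(Gf)\kappa_0$ and $\kappa_1(Ff)$ appearing as the boundary of this constraint square are not presented as companions of a common vertical morphism, so the uniqueness statement simply does not apply. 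Indeed, if the exponent were $\bH\mathbbm{2}$ instead of $\Sq\Eadj$, the corresponding mate would be an arbitrary $2$-cell of the horizontal $2$-category and the construction would fail; the statement is \emph{not} a formal consequence of pointwise companions plus mates for an arbitrary exponent. The paper's proof closes this gap by using the specific structure of $\Sq\Eadj$: its generating horizontal morphisms $f,g$ form an adjoint equivalence with triangle identities $(\theta,\xi)$, whence the naturality squares $\lambda_f$ and $\lambda_g$ are weakly inverse to each other, and an explicit vertical inverse of $\kappa_f$ is then written down as a pasting involving $\kappa_g$, $F\theta^{-1}$, $G\xi^{-1}$ and verified using the conjoint equalities together with those triangle identities. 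Your proof needs this (or an equivalent) argument spelled out; as written, the invertibility of the horizontal constraints, and hence the pseudo naturality of the assembled companion/conjoint, is unjustified. The remaining verifications (functoriality, naturality with respect to squares, and the modification axioms for $\varphi,\psi$, resp.\ $\varepsilon,\eta$) do reduce to the zig-zag identities of the pointwise companion/conjoint pairs and the (pseudo) naturality of $\lambda$, as you say, but again via direct pasting computations rather than via \cref{unicitycompanion}.
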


\begin{proof}
We show that every vertical morphism in $\llbracket\Sq\Eadj,\bA\rrbracket_{\mathrm{ps}}$ admits a conjoint; the proof that every vertical morphism admits a companion is as in \cite[Proposition 6.15]{fibrantly_induced}.

Let $F,G\colon\Sq\Eadj\to\bA$ be double functors and $\lambda\colon F\Rightarrow G$ a vertical pseudo natural transformation. We want to construct a conjoint pair $(\kappa,\lambda,\varepsilon,\eta)$ in $\llbracket\Sq\Eadj,\bA\rrbracket_{\mathrm{ps}}$ consisting of a horizontal pseudo natural transformation $\kappa\colon G\Arrowdot F$ and two modifications $\varepsilon, \eta$ with the appropriate boundary.

We know that $\bA$ is an equipment, and thus for each object $0$ and $1$ in $\Sq\Eadj$ we can pick conjoint pairs $(\kappa_0,\lambda_0, \varepsilon_0,\eta_0)$ and $(\kappa_1,\lambda_1,\varepsilon_1,\eta_1)$ in $\bA$. Now for each generating vertical morphisms $u\colon 0\to 1$ and $v\colon 1\to 0$ in $\Sq\Eadj$ we define $\kappa_u$ and $\kappa_v$ as the following pastings. 

\begin{tz}
\node[](1') {$G0$}; 
\node[right of=1'](2') {$F0$};
\node[below of=1'](3') {$G1$}; 
\node[below of=2'](4') {$F1$}; 
\draw[->,pro] (1') to node[above,la]{$\kappa_0$} (2');
\draw[->,pro] (3') to node[below,la]{$\kappa_1$} (4');
\draw[->] (1') to node[left,la]{$Gu$} (3');
\draw[->] (2') to node[right,la]{$Fu$} (4');

\node[la] at ($(1')!0.5!(4')$) {$\kappa_u$};

\node[right of=2',yshift=0.75cm](1){$G0$};
\node[right of=1](2) {$F0$};
\node[below of=1](3) {$G0$};
\node[below of=2](4) {$G0$};
\draw[->,pro] (1) to node[above,la]{$\kappa_0$} (2);
\draw[d,pro] (3) to (4);
\draw[d] (1) to (3);
\draw[->] (2) to node[right,la]{$\lambda_0$} (4);

\node[la] at ($(1)!0.5!(4)$) {$\varepsilon_0$};

\node[below of=3](5) {$G1$};
\node[below of=4](6) {$G1$};
\draw[->] (3) to node[left,la]{$Gu$} (5);
\draw[->] (4) to node[right,la]{$Gu$} (6);
\draw[d,pro] (5) to (6);

\node[la] at ($(3)!0.5!(6)$) {$\id_{Gu}$};

\node[la] at ($(2')!0.5!(5)+(0,0.35cm)$) {$\coloneqq$};

\node[right of=2](1') {$F0$};
\node[right of=1'](2') {$F0$};
\node[below of=1'](3') {$F1$};
\node[below of=2'](4') {$F1$};
\draw[d,pro] (1') to (2');
\draw[d,pro] (3') to (4');
\draw[->] (1') to node[left,la]{$Fu$} (3');
\draw[->] (2') to node[right,la]{$Fu$} (4');

\node[la] at  ($(1')!0.5!(4')$) {$\id_{Fu}$};

\node[below of=3'](5'){$G1$};
\node[below of=4'](6'){$F1$};
\draw[->] (3') to node[left,la]{$\lambda_1$} (5');
\draw[d] (4') to (6');
\draw[->,pro] (5') to node[below,la]{$\kappa_1$} (6');

\node[la] at ($(3')!0.5!(6')$) {$\eta_1$};

\draw[d,pro](2) to (1');
\draw[d,pro](6) to (5');

\node[la] at ($(4)!0.5!(3')$) {$\lambda_u^{-1}$};

\end{tz}

\begin{tz}
\node[](1') {$G1$}; 
\node[right of=1'](2') {$F1$};
\node[below of=1'](3') {$G)$}; 
\node[below of=2'](4') {$F0$}; 
\draw[->,pro] (1') to node[above,la]{$\kappa_0$} (2');
\draw[->,pro] (3') to node[below,la]{$\kappa_1$} (4');
\draw[->] (1') to node[left,la]{$Gv$} (3');
\draw[->] (2') to node[right,la]{$Fv$} (4');

\node[la] at ($(1')!0.5!(4')$) {$\kappa_v$};

\node[right of=2',yshift=0.75cm](1){$G1$};
\node[right of=1](2) {$F1$};
\node[below of=1](3) {$G1$};
\node[below of=2](4) {$G1$};
\draw[->,pro] (1) to node[above,la]{$\kappa_1$} (2);
\draw[d,pro] (3) to (4);
\draw[d] (1) to (3);
\draw[->] (2) to node[right,la]{$\lambda_1$} (4);

\node[la] at ($(1)!0.5!(4)$) {$\varepsilon_1$};

\node[below of=3](5) {$G0$};
\node[below of=4](6) {$G0$};
\draw[->] (3) to node[left,la]{$Gv$} (5);
\draw[->] (4) to node[right,la]{$Gv$} (6);
\draw[d,pro] (5) to (6);

\node[la] at ($(3)!0.5!(6)$) {$\id_{Gv}$};

\node[la] at ($(2')!0.5!(5)+(0,0.35cm)$) {$\coloneqq$};

\node[right of=2](1') {$F1$};
\node[right of=1'](2') {$F1$};
\node[below of=1'](3') {$F0$};
\node[below of=2'](4') {$F0$};
\draw[d,pro] (1') to (2');
\draw[d,pro] (3') to (4');
\draw[->] (1') to node[left,la]{$Fv$} (3');
\draw[->] (2') to node[right,la]{$Fv$} (4');

\node[la] at  ($(1')!0.5!(4')$) {$\id_{Fv}$};

\node[below of=3'](5'){$G0$};
\node[below of=4'](6'){$F0$};
\draw[->] (3') to node[left,la]{$\lambda_0$} (5');
\draw[d] (4') to (6');
\draw[->,pro] (5') to node[below,la]{$\kappa_0$} (6');

\node[la] at ($(3')!0.5!(6')$) {$\eta_0$};

\draw[d,pro](2) to (1');
\draw[d,pro](6) to (5');

\node[la] at ($(4)!0.5!(3')$) {$\lambda_v^{-1}$};

\end{tz}

Horizontally, for each generating horizontal morphism $f\colon 0\arrowdot 1$ and $g\colon 1 \arrowdot 0$ in $\Sq\Eadj$, we consider $\kappa_f$ and $\kappa_g$ to be the following pastings.

\begin{tz}
\node[](1) {$G0$}; 
\node[right of=1](2) {$F0$};
\node[right of=2](3) {$F1$};
\node[below of=1](4) {$G0$};
\node[right of=4](5) {$G1$};
\node[right of=5](6) {$F1$};

\draw[d] (1) to (4);
\draw[d] (3) to (6);
\draw[->,pro] (1) to node[above,la]{$\kappa_0$} (2);
\draw[->,pro] (2) to node[above,la]{$Ff$} (3);
\draw[->,pro] (4) to node[below,la]{$Gf$} (5);
\draw[->,pro] (5) to node[below,la]{$\kappa_1$} (6);

\node[la] at ($(1)!0.5!(6)$) {$\kappa_f$};

\node[right of=3](1) {$G0$};
\node[below of=1](5) {$G0$};

\node[la] at ($(3)!0.5!(5)$) {$\coloneqq$};

\node[right of=1](2) {$F0$};
\node[right of=2](3) {$F1$};
\node[right of=3](4) {$F1$};
\node[below of=1](5) {$G0$};
\node[right of=5](6) {$G0$};
\node[right of=6](7) {$G1$};
\node[right of=7](8) {$F1$};

\draw[->,pro] (1) to node[above,la]{$\kappa_0$} (2);
\draw[->,pro] (2) to node[above,la]{$Ff$} (3);
\draw[d,pro] (3) to (4);

\draw[d] (1) to (5);
\draw[->] (2) to node[right,la]{$\lambda_0$} (6);
\draw[->] (3) to node[right,la]{$\lambda_1$} (7);
\draw[d] (4) to (8);

\draw[d,pro] (5) to (6);
\draw[->,pro] (6) to node[below,la]{$Gf$} (7);
\draw[->,pro] (7) to node[below,la]{$\kappa_1$} (8);

\node[la] at ($(1)!0.5!(6)$) {$\varepsilon_0$};
\node[la] at ($(2)!0.5!(7)$) {$\lambda_f$};
\node[la] at ($(3)!0.5!(8)$) {$\eta_1$};

\end{tz}

\begin{tz}
\node[](1) {$G1$}; 
\node[right of=1](2) {$F1$};
\node[right of=2](3) {$F0$};
\node[below of=1](4) {$G1$};
\node[right of=4](5) {$G0$};
\node[right of=5](6) {$F0$};

\draw[d] (1) to (4);
\draw[d] (3) to (6);
\draw[->,pro] (1) to node[above,la]{$\kappa_1$} (2);
\draw[->,pro] (2) to node[above,la]{$Fg$} (3);
\draw[->,pro] (4) to node[below,la]{$Gg$} (5);
\draw[->,pro] (5) to node[below,la]{$\kappa_0$} (6);

\node[la] at ($(1)!0.5!(6)$) {$\kappa_g$};

\node[right of=3](1) {$G1$};
\node[below of=1](5) {$G1$};

\node[la] at ($(3)!0.5!(5)$) {$\coloneqq$};

\node[right of=1](2) {$F1$};
\node[right of=2](3) {$F0$};
\node[right of=3](4) {$F0$};
\node[below of=1](5) {$G1$};
\node[right of=5](6) {$G1$};
\node[right of=6](7) {$G0$};
\node[right of=7](8) {$F0$};

\draw[->,pro] (1) to node[above,la]{$\kappa_1$} (2);
\draw[->,pro] (2) to node[above,la]{$Fg$} (3);
\draw[d,pro] (3) to (4);

\draw[d] (1) to (5);
\draw[->] (2) to node[right,la]{$\lambda_1$} (6);
\draw[->] (3) to node[right,la]{$\lambda_0$} (7);
\draw[d] (4) to (8);

\draw[d,pro] (5) to (6);
\draw[->,pro] (6) to node[below,la]{$Gg$} (7);
\draw[->,pro] (7) to node[below,la]{$\kappa_0$} (8);

\node[la] at ($(1)!0.5!(6)$) {$\varepsilon_1$};
\node[la] at ($(2)!0.5!(7)$) {$\lambda_g$};
\node[la] at ($(3)!0.5!(8)$) {$\eta_0$};

\end{tz}

The vertical inverses of $\kappa_f$ and $\kappa_g$ are as below. 

\begin{tz}
\node[](1') {$G0$}; 
\node[right of=1'](2') {$G1$};
\node[right of=2'](3') {$F1$};
\node[below of=1'](4') {$G0$};
\node[right of=4'](5') {$F0$};
\node[right of=5'](6') {$F1$};

\draw[d] (1') to (4');
\draw[d] (3') to (6');
\draw[->,pro] (1') to node[above,la]{$Gf$} (2');
\draw[->,pro] (2') to node[above,la]{$\kappa_1$} (3');
\draw[->,pro] (4') to node[below,la]{$\kappa_0$} (5');
\draw[->,pro] (5') to node[below,la]{$Ff$} (6');

\node[la] at ($(1')!0.5!(6')$) {$(\kappa_f)^{-1}$};

\node[right of=3',yshift=1.5cm](1) {$G0$};
\node[right of=1](2) {$G1$};
\node[right of=2](3) {$F1$};

\node[below of=2](7) {$G1$};
\node[right of=7](8) {$F1$};
\node[right of=8](9) {$F0$};
\node[right of=9](10) {$F1$};

\node[above of=10](5) {$F1$};

\node[below of=7](12) {$G1$};
\node[left of=12](11) {$G0$};
\node[right of=12](13) {$G0$};
\node[right of=13](14) {$F0$};

\node[below of=11](16) {$G0$};
\node[below of=13](18) {$G0$};
\node[right of=18](19) {$F0$};
\node[right of=19](20) {$F1$};

\draw[->,pro] (1) to node[above,la]{$Gf$} (2);
\draw[->,pro] (2) to node[above,la]{$\kappa_1$} (3);
\draw[d,pro] (3) to (5);

\draw[->,pro] (7) to node[above,la]{$\kappa_1$} (8);
\draw[->,pro] (8) to node[above,la]{$Fg$} (9);
\draw[->,pro] (9) to node[above,la]{$Ff$} (10);

\draw[->,pro] (11) to node[above,la]{$Gf$} (12);
\draw[->,pro] (12) to node[above,la]{$Gg$} (13);
\draw[->,pro] (13) to node[above,la]{$\kappa_0$} (14);

\draw[d,pro] (16) to (18);
\draw[->,pro] (18) to node[below,la]{$\kappa_0$} (19);
\draw[->,pro] (19) to node[below,la]{$Ff$} (20);

\draw[d] (1) to (11);
\draw[d] (11) to (16);

\draw[d] (2) to (7);
\draw[d] (7) to (12);

\draw[d] (3) to (8);
\draw[d] (13) to (18);

\draw[d] (9) to (14);
\draw[d] (14) to (19);

\draw[d] (5) to (10);
\draw[d] (10) to (20);

\node[la] at ($(1)!0.5!(12)$) {$e_{Gf}$};
\node[la] at ($(2)!0.5!(8)$) {$e_{\kappa_1}$};
\node[la] at ($(7)!0.5!(14)$) {$\kappa_g$};
\node[la] at ($(9)!0.5!(20)$) {$e_{Ff}$};
\node[la] at ($(13)!0.5!(19)$) {$e_{\kappa_0}$};

\node[la] at ($(3)!0.5!(10)-(7pt,0)$) {$F\theta^{-1}$};
\node[la] at ($(3)!0.5!(10)+(7pt,0)$) {\rotatebox{90}{$\cong$}};

\node[la] at ($(11)!0.5!(18)-(7pt,0)$) {$G\xi^{-1}$};
\node[la] at ($(11)!0.5!(18)+(7pt,0)$) {\rotatebox{90}{$\cong$}};

\node[la] at ($(3')!0.5!(11)$) {$\coloneqq$};
\end{tz}

\begin{tz}
\node[](1') {$G1$}; 
\node[right of=1'](2') {$G0$};
\node[right of=2'](3') {$F0$};
\node[below of=1'](4') {$G1$};
\node[right of=4'](5') {$F1$};
\node[right of=5'](6') {$F0$};

\draw[d] (1') to (4');
\draw[d] (3') to (6');
\draw[->,pro] (1') to node[above,la]{$Gg$} (2');
\draw[->,pro] (2') to node[above,la]{$\kappa_0$} (3');
\draw[->,pro] (4') to node[below,la]{$\kappa_1$} (5');
\draw[->,pro] (5') to node[below,la]{$Fg$} (6');

\node[la] at ($(1')!0.5!(6')$) {$(\kappa_g)^{-1}$};

\node[right of=3',yshift=1.5cm](1) {$G1$};
\node[right of=1](2) {$G0$};
\node[right of=2](3) {$F0$};

\node[below of=2](7) {$G0$};
\node[right of=7](8) {$F0$};
\node[right of=8](9) {$F1$};
\node[right of=9](10) {$F0$};

\node[above of=10](5) {$F0$};

\node[below of=7](12) {$G0$};
\node[left of=12](11) {$G1$};
\node[right of=12](13) {$G1$};
\node[right of=13](14) {$F1$};

\node[below of=11](16) {$G1$};
\node[below of=13](18) {$G1$};
\node[right of=18](19) {$F1$};
\node[right of=19](20) {$F0$};

\draw[->,pro] (1) to node[above,la]{$Gg$} (2);
\draw[->,pro] (2) to node[above,la]{$\kappa_0$} (3);
\draw[d,pro] (3) to (5);

\draw[->,pro] (7) to node[above,la]{$\kappa_0$} (8);
\draw[->,pro] (8) to node[above,la]{$Ff$} (9);
\draw[->,pro] (9) to node[above,la]{$Fg$} (10);

\draw[->,pro] (11) to node[above,la]{$Gg$} (12);
\draw[->,pro] (12) to node[above,la]{$Gf$} (13);
\draw[->,pro] (13) to node[above,la]{$\kappa_1$} (14);

\draw[d,pro] (16) to (18);
\draw[->,pro] (18) to node[below,la]{$\kappa_1$} (19);
\draw[->,pro] (19) to node[below,la]{$Fg$} (20);

\draw[d] (1) to (11);
\draw[d] (11) to (16);

\draw[d] (2) to (7);
\draw[d] (7) to (12);

\draw[d] (3) to (8);
\draw[d] (13) to (18);

\draw[d] (9) to (14);
\draw[d] (14) to (19);

\draw[d] (5) to (10);
\draw[d] (10) to (20);

\node[la] at ($(1)!0.5!(12)$) {$e_{Gg}$};
\node[la] at ($(2)!0.5!(8)$) {$e_{\kappa_0}$};
\node[la] at ($(7)!0.5!(14)$) {$\kappa_f$};
\node[la] at ($(9)!0.5!(20)$) {$e_{Fg}$};
\node[la] at ($(13)!0.5!(19)$) {$e_{\kappa_1}$};

\node[la] at ($(3)!0.5!(10)-(7pt,0)$) {$F\xi$};
\node[la] at ($(3)!0.5!(10)+(7pt,0)$) {\rotatebox{90}{$\cong$}};

\node[la] at ($(11)!0.5!(18)-(7pt,0)$) {$G\theta$};
\node[la] at ($(11)!0.5!(18)+(7pt,0)$) {\rotatebox{90}{$\cong$}};

\node[la] at ($(3')!0.5!(11)$) {$\coloneqq$};

\end{tz}

To verify that these are indeed inverses of $\kappa_f$ and $\kappa_g$ we rely on the conjoint equalities coming from  $(\kappa_0,\lambda_0, \varepsilon_0,\eta_0)$ and $(\kappa_1,\lambda_1,\varepsilon_1,\eta_1)$, that we know that  $\lambda_f$ and $\lambda_g$ are weakly inverse to each other, together with the triangle identities of $(\theta,\xi)$. The naturality of $\kappa$ is a consequence of the conjoint equalities of $(\kappa_0,\lambda_0, \varepsilon_0,\eta_0)$ and $(\kappa_1,\lambda_1,\varepsilon_1,\eta_1)$, combined with the naturality of $\lambda$. Finally, we can use the same conjoint equalities to show that $\varepsilon$ and $\eta$ are modifications.
\end{proof}

\begin{prop}\label{prop:pathobject}Let $\bA$ be an equipment. Then the factorization of the diagonal
\[ \bA\xrightarrow{w} \llbracket\Sq\Eadj,\bA\rrbracket_{\mathrm{ps}} \xrightarrow{p} \bA\times \bA \]
induced by $\mathbbm{1}\sqcup \mathbbm{1}\to \Sq\Eadj\to \mathbbm{1}$ is a path object. 
\end{prop}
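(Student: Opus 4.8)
The goal is to verify the two requirements of the path-object axiom~\ref{Path} in \cref{thm:fibrantly_generated}, namely that $w\in\Wf$ and $p\in\nfib$, where by construction $p\circ w$ is the diagonal $\bA\to\bA\times\bA$. I would begin by identifying the two maps explicitly. Using that the Gray-closed structure of \cref{prop:Bohm} extends to double categories, and that $\mathbbm 1$ is the monoidal unit while $\llbracket-,\bA\rrbracket_{\mathrm{ps}}$ turns coproducts into products, we have $\llbracket\mathbbm 1,\bA\rrbracket_{\mathrm{ps}}\cong\bA$ and $\llbracket\mathbbm 1\sqcup\mathbbm 1,\bA\rrbracket_{\mathrm{ps}}\cong\bA\times\bA$. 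Thus $w$ is precomposition with $\Sq\Eadj\to\mathbbm 1$, sending $a\in\bA$ to the constant functor $\const_a\colon\Sq\Eadj\to\bA$, and $p$ is precomposition with $\mathbbm 1\sqcup\mathbbm 1\to\Sq\Eadj$, sending $F$ to the pair $(F0,F1)$ of its values at the two objects. Both $\bA\times\bA$ (a product of equipments) and $\llbracket\Sq\Eadj,\bA\rrbracket_{\mathrm{ps}}$ (by \cref{path_object_equipment}) are equipments, so all three double categories are fibrant and I may use the equipment-specific descriptions throughout.

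To show $w\in\Wf$ I would verify that $w$ is a double biequivalence using \cref{defn:doublebieq}, with (w3) replaced by the equivalent (w3$'$) of \cref{weakequiv_w3}. The governing idea is that every object of $\Sq\Eadj$ is equivalent to $0$: any double functor $F\colon\Sq\Eadj\to\bA$ carries the free adjoint equivalence $\Eadj$ to an adjoint equivalence $F0\simeq F1$ in $\bA$, and conjugating the remaining values of $F$ with this equivalence produces a vertical pseudo natural equivalence $F\Rightarrow\const_{F0}$, giving (w1) with $A=F0$. For (w2) and (w3$'$), a vertical (resp.\ horizontal) pseudo natural transformation between two constant functors $\const_a,\const_c$ has its components at $0$ and $1$ identified by an invertible coherence square, so it is $2$-isomorphic to the image under $w$ of a single morphism of $\bA$; and (w4) holds because a modification between constant transformations is exactly the datum of one square of $\bA$.

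To show $p\in\nfib$, since $p$ is a double functor between equipments, by \cref{nfib_between_equipments} it suffices to verify (f1), (f4), and (f5). The $2$-isomorphism-lifting conditions (f4) and (f5) follow readily: the relevant $2$-isomorphism for $pF$ amounts to a pair of $2$-isomorphisms in $\bfH\bA$ (resp.\ $\bfV\bA$), one at each object, which assemble into the required lifted $2$-isomorphism in $\llbracket\Sq\Eadj,\bA\rrbracket_{\mathrm{ps}}$ with target $F$. The essential condition is (f1): given an object $F$ and a vertical equivalence $(v_0,v_1)\colon(b_0,b_1)\xrightarrow{\simeq}(F0,F1)$ in $\bA\times\bA$, I must produce a vertical equivalence $\lambda\colon F'\Rightarrow F$ with $\lambda_0=v_0$ and $\lambda_1=v_1$, which I would do by transport of structure, defining $F'$ on the generating cells of $\Sq\Eadj$ by conjugating the values of $F$ with $v_0,v_1$ and their adjoint inverses. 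Equivalently, and perhaps more cleanly, one can transpose across the adjunction of \cref{prop:Bohm}: writing $p=\llbracket i,\bA\rrbracket_{\mathrm{ps}}$ for $i\colon\mathbbm 1\sqcup\mathbbm 1\to\Sq\Eadj$, the condition $p\in\cJ^\boxslash$ is equivalent to $\bA$ lifting against each pushout--product $j\mathbin{\widehat{\otimes}_{\mathrm{Gr}}}i$ with $j\in\cJ$, and since $\bA$ is naive fibrant this reduces to checking that these pushout--products are anodyne.

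I expect the main obstacle to be condition (f1) (and the dual ``contraction'' behind (w1)): promoting the conjugated assignment $F'$ to a genuine strict double functor and exhibiting $\lambda$ as an honest vertical equivalence requires the full coherence of the \emph{adjoint} equivalence data, so that it is here, rather than for a bare equivalence, that the unit/counit triangle identities are used, in the spirit of \cref{rmk:companionsforequivs}. All remaining steps are routine bookkeeping of pasting diagrams of the kind appearing in \cref{path_object_equipment}, and follow the corresponding arguments of \cite[Section 6]{fibrantly_induced} after the horizontal/vertical and companion/conjoint dualisation.
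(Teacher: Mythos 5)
Your proposal is correct and follows essentially the same route as the paper, whose proof of \cref{prop:pathobject} simply defers to the analogous argument of \cite[Proposition 6.16]{fibrantly_induced} after the preparatory adaptations (\cref{path_object_equipment}, \cref{weakequiv_w3}, \cref{nfib_between_equipments}); your sketch is an expansion of exactly that argument, including the reduction of $p\in\nfib$ to (f1), (f4), (f5) and the conjugation/transport-of-structure step for (f1), with the pushout--product transposition being the same standard alternative used in that reference. No gaps beyond the routine pasting verifications you already flag.
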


\begin{proof} The proof is completely analogous to \cite[Proposition 6.16]{fibrantly_induced}, as we have adapted all results needed for this to be the case. 
\end{proof}

\subsubsection{Proof of the model structure} It only remains to put together all the results we have, and the proof follows as in \cite[Theorem 6.1]{fibrantly_induced}.

\begin{proof}[Proof of \cref{MS_equipments}]

We use \cref{thm:fibrantly_generated} taking the set $\cI$ to be as in \cref{def:cofib}, the weak factorization system $(\an,\nfib)$ to be the one generated by the set $\cJ$ of \cref{def:anodyne}, and the class $\Wf$ to be the double biequivalences between equipments (as equipments are the naive fibrant objects by \cref{equip_naive_fibrant_objects}). 

We have that condition \ref{ax:trivfib} follows from \cref{cor:trivfib_are_we}, conditions \ref{2of6Wf} and \ref{accessibility} follow from \cref{rem:dblbieq}, that the path object condition \ref{Path} follows from \cref{prop:pathobject}, and that \ref{fibwe} is the content of \cref{prop:nfib_and_biequiv_is_trivfib}.

\end{proof}

\section{Background on Reedy categories}

Reedy categories allow us to prove technical lemmas with simple categorical constructions that provide insight in certain constructions and definitions in homotopy theory. For this section we will follow \cite{Reedy_RV}, and see also \cite[Appendix C]{elements}.

\begin{defn}
    A \emph{Reedy category} is a small category $\cC$ together with a degree function $\deg\colon\mathrm{obj}(\cC)\to\mathbb{N}$, and two subcategories $\overrightarrow{\cC}$ and $\overleftarrow{\cC}$ that contain all objects of $\cC$, and satisfy the following conditions.
    \begin{enumerate}
        \item Every arrow $\alpha$ in $\cC$ has a unique factorization $\alpha=\overrightarrow{\alpha}\circ\overleftarrow{\alpha}$ with $\overrightarrow{\alpha}\in\overrightarrow{\cC}$ and $\overleftarrow{\alpha}\in\overleftarrow{\cC}$.
        \item If $\alpha\colon c'\to c$ in $\cC$ is non-identity arrow in $\overrightarrow{\cC}$ (respectively $\overleftarrow{\cC}$) then $\deg(c')<\deg(c)$ (respectively $\deg(c')>\deg(c)$).
    \end{enumerate}
We often write $(\cC,\overrightarrow{\cC},\overleftarrow{\cC})$ for a Reedy category as above. 
\end{defn}

\begin{defn}\label{leibniz_construction}
    Let $\otimes\colon\mathcal{K}\times\cL\to \mathcal{M}$ be a bifunctor, and suppose $\mathcal{M}$ admits all pushouts. Then the Leibniz construction provides a bifunctor $\widehat{\otimes}\colon\mathcal{K}^\mathbb{2}\times\cL^\mathbb{2}\to \mathcal{M}^\mathbb{2}$ between arrow categories that assigns, to each pair $f\in\mathcal{K}^\mathbb{2},g\in\mathcal{L}^\mathbb{2}$, and object $f\widehat{\otimes} g\in \mathcal{M}^\mathbb{2}$ given by the universal property of the pushout as illustrated below. 

\begin{tz}
\node[](1) {$K\otimes L$};
\node[right of=1,xshift=2.5cm](2) {$K'\otimes L$};
\node[below of=1](3) {$K\otimes L'$};
\node[below of=2](4) {$(K'\otimes L)\cup_{K\otimes L}(K\otimes L')$};
\node[below of=4,xshift=4cm,yshift=0.5cm](5) {$K'\otimes L'$};

\draw[->] (1) to node[above,la]{$f\otimes L$}   (2);
\draw[->] (1) to node[left,la]{$K\otimes g$}    (3);
\draw[->] (2) to  (4);
\draw[->] (3) to  (4);
\draw[->,dashed] (4) to node[below,la]{$f\widehat{\otimes} g$} (5);

\draw[->,bend left=20] (2) to node[above,xshift=2pt,la]{$K'\otimes g$} (5);
\draw[->,bend right=20] (3) to node[below,la]{$f\otimes L'$} (5);

\node at ($(4)+(-9pt,9pt)$) {$\ulcorner$};
\end{tz}

\end{defn}

\begin{rem}
When the bifunctor $\otimes$ corresponds to a monoidal product, the Leibniz construction is usually called the pushout product.
\end{rem}

\begin{notation}
 For any small category $\cC$, given an object $c\in\cC$ we write $\cC_c$ for the covariant representable functor determined by $c$, and $\cC^c$ for the contravariant one. 
\end{notation}

\begin{defn} Let $\cC$ be a Reedy category, and $c$ an object in $\cC$ of degree $n$. We define the boundary of the representables $\cC_c$ and $\cC^c$ by 
$$\partial\cC_c\coloneqq \mathrm{sk}_{n-1}\cC_c \text{ in }\Set^\cC$$
$$\partial\cC^c\coloneqq \mathrm{sk}_{n-1}\cC^c \text{ in }\Set^{\cC^\op}$$
where these are the subfunctors of arrows that factor through an object of degree at most $n-1$.
\end{defn}

\begin{defn}\label{weighted_colimit}
Let $\mathcal{I}$ be a small category, and $\mathcal{D}$ a cocomplete category. We define the \emph{weighted colimit bifunctor} as the bifunctor $\Set^{\cI^\op}\times\mathcal{D}^{\cI}\xrightarrow{-\ast_{\cI}-}\mathcal{D}$ characterized by the following two axioms. Let us call the input in $\Set^{\cI^\op}$ the \emph{weight}.
\begin{enumerate}
    \item If the weight is representable, that is, it is of the form $\cI^i\coloneqq \cI(-,i)$ for $i$ in $\cI$, then $$\cI^i\ast_\cI F\coloneqq F(i).$$ 
    \item It is cocontinuous in the weight; that is, if $W=\mathrm{colimit}_KW_K$, then 
    $$(\mathrm{colim}_KW_K)\ast_\cI F\cong\mathrm{colim}_K (W_K\ast_\cI F).$$
\end{enumerate}
\end{defn}

\begin{defn}\label{weighted_limit}
Let $\mathcal{J}$ be a small category, and $\mathcal{D}$ a complete category. We define the \emph{weighted limit bifunctor} as the bifunctor $(\Set^\cJ)^\op\times\mathcal{D}^{\cJ}\xrightarrow{\{-.-\}^\cJ}\mathcal{D}$ characterized by the following two axioms. Let us call the input in $\Set^\cJ$ the \emph{weight}.
\begin{enumerate}
    \item If the weight is representable, that is, it is of the form $\cJ_j\coloneqq \cJ(j,-)$ for $j$ in $\cJ$, then $$\{\cJ_j,G\}^\cJ=G(j).$$ 
    \item If the weight is $W=\mathrm{colimit}_KW_K$, then 
    $$\{\mathrm{colim}_KW_K, G\}^\cJ\cong\mathrm{lim}_K \{W_K,G\}^\cJ.$$
\end{enumerate}
\end{defn}

\begin{rem}\label{rem:weighted_limitcolimit_hom_2varadj}
For any complete and cocomplete category $\cM$, and any $\cI$ and $\cJ$ small categories,  the weighted limit and colimit bifunctors together with the $\mathrm{hom}$ bifunctor
$$(\cM^\cI)^\op\times\cM^\cJ\longrightarrow\Set^{\cI^{\op}\times\cJ}$$
form a two variable adjunction.
\end{rem}

\begin{defn}\label{def:latching_matching_objects} Let $\cC$ be a Reedy category, and $c$ an object in $\cC$. Consider also a diagram  $X\in\mathcal{D}^\cC$, where $\mathcal{D}$ is complete and cocomplete, then:
\begin{enumerate}
    \item the latching object of the diagram $X$ at $c$ is the weighted colimit 
    $$L^cX\coloneqq\partial\cC^c\ast_\cC X,$$
    \item the matching object of the diagram $X$ at $c$ is the weighted limit 
    $$M^cX\coloneqq\{\partial\cC_c,X\}^\cC.$$
\end{enumerate}
\end{defn}

\begin{defn}\label{def:matching_latching_maps}  Let $\cC$ be a Reedy category, and $c$ an object in $\cC$. Consider also a diagram  $X\in\mathcal{D}^\cC$, where $\mathcal{D}$ is a complete and cocomplete category, then:
\begin{enumerate}
    \item the latching map is $\ell^c\colon L^cX\to X(c)$ induced by the boundary inclusion $\partial\cC^c\hookrightarrow \cC^c$, and
    \item the matching map is $m^c\colon X(c)\to M^cX$ induced by the boundary inclusion $\partial\cC_c\hookrightarrow\cC_c$.
\end{enumerate}
\end{defn}

\begin{defn}\label{def:relative_matching_latching_maps}
 Let $\cC$ be a Reedy category, $c$ an object in $\cC$, and $\mathcal{D}$ a complete and cocomplete category. Consider also two diagrams $X,Y$ in $\mathcal{D}^\cC$ and a natural transformation $f\colon X\to Y$, then:
 \begin{enumerate}
     \item the relative latching map $\widehat{\ell}^c f$ is given by the universal property of pushouts as illustrated in the diagram below
\begin{tz}
\node[](1) {$L^cX$}; 
\node[right of=1](2) {$X(c)$}; 
\node[below of=1](1') {$L^cY$}; 
\node[below of=2](2') {$\bullet$}; 

\draw[->] (1) to node[above,la]{$\ell^c$} (2); 
\draw[->] (1) to node[left,la]{$L^cf$} (1'); 
\draw[->] (2) to (2'); 
\draw[->] (1') to (2');

\node[below right of=2',xshift=.5cm](5) {$Y(c)$};
\draw[->,bend left] (2) to node[right,la]{$f^c$} (5); 
\draw[->,bend right] (1') to node[below,la]{$\ell^c$} (5); 
\draw[->,dashed] (2') to node[above,la,pos=0.4,xshift=2pt]{$\widehat{\ell}^cf$} (5);
\node at ($(2')-(.3cm,-.3cm)$) {$\ulcorner$};
\end{tz}
    \item the relative matching map $\widehat{m}^cf$ is given by the universal property of pullbacks as illustrated in the diagram below. 

    \begin{tz}
\node[](1) {$\bullet$}; 
\node[right of=1](2) {$M^cX$}; 
\node[below of=1](1') {$Y(c)$}; 
\node[below of=2](2') {$M^cY$}; 

\draw[->] (1) to (2); 
\draw[->] (1) to (1'); 
\draw[->] (2) to node[right, la]{$M^cf$} (2'); 
\draw[->] (1') to node[below, la]{$m^c$} (2');

\node[above left of=1,xshift=-.5cm](5) {$X(c)$};
\draw[->,bend left] (5) to node[above,la]{$m^c$} (2); 
\draw[->,bend right] (5) to node[below,la, xshift=-2pt]{$f^c$} (1'); 
\draw[->,dashed] (5) to node[above,la,pos=0.5,xshift=2pt]{$\widehat{m}^cf$} (1);
\node at ($(1)-(-.3cm,.3cm)$) {$\lrcorner$};
\end{tz}
 \end{enumerate}
 
    relative latching/matching maps
\end{defn}

\begin{notation}
    Let $\cC$ be a small category. We denote by $\mathcal{B}$ the set of all boundary inclusions $\mathcal{B}=\{\partial\cC_c\to\cC_c \text{ for all } c\in\cC\}$
\end{notation}

\begin{theorem}\label{thm:Reedy_wfs}
Let $\cC$ be a Reedy category, and let $\mathcal{D}$ be a small complete and cocomplete category together with a weak factorization system $(\mathcal{L},\mathcal{R})$ on it. Then there is a a weak factorization system $(\mathcal{L}[\cC], \mathcal{R}[\cC])$ in the category of functors $\mathcal{D}^\mathcal{C}$ whose classes are defined as follows. 

\begin{enumerate}
    \item A map $\alpha\colon X\to Y$ in $\mathcal{D}^\cC$ is in the left class $\mathcal{L}[\cC]$ if its relative latching maps $\widehat{\ell}^c \alpha$ in $\mathcal{D}$, for all $c\in\cC$, are in the left class $\cL$.
    \item A map $\alpha\colon X\to Y$ in $\mathcal{D}^\cC$ is in the right class $\mathcal{R}[\cC]$ if its relative matching maps $\widehat{m}^c \alpha$ in $\mathcal{D}$, for all $c\in\cC$, are in  the right class $\mathcal{R}$.
\end{enumerate}

Moreover, whenever $(\mathcal{L},\mathcal{R})$ is cofibrantly generated by $\mathcal{J}$, the weak factorization system $(\mathcal{L}[\cC], \mathcal{R}[\cC])$ is cofibrantly generated by $\mathcal{B} \hat{\ast} \mathcal{J}=\{b\hat{\ast}j, \text{ for all } b\in\mathcal{B},j\in\mathcal{J}\}$ where $\cB$ is the set of boundary inclusions in $\cD^\cC$.
\end{theorem}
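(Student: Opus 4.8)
The plan is to run the classical inductive argument for Reedy weak factorization systems organized by the degree filtration, following \cite{Reedy_RV}. The technical crux is a lifting correspondence: for $\alpha\colon X\to Y$ and $\beta\colon U\to V$ in $\cD^\cC$, I claim that $\alpha$ has the left lifting property against $\beta$ as soon as every relative latching map $\widehat{\ell}^c\alpha$ has the left lifting property against every relative matching map $\widehat{m}^c\beta$ in $\cD$. I would prove this by building a diagonal filler one degree at a time: if a compatible lift has been chosen on all objects of degree $<n$, then for $c$ of degree $n$ the already-chosen values assemble into a map out of the latching object and the given square into a map into the matching object, and the data these must respect is precisely a lifting problem of $\widehat{\ell}^c\alpha$ against $\widehat{m}^c\beta$ in $\cD$; a solution extends the lift over degree $n$. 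The induction is well-founded because $L^c$ and $M^c$ involve only objects of degree $<n$, the boundaries $\partial\cC^c,\partial\cC_c$ being skeleta. Specializing, if $\alpha\in\cL[\cC]$ and $\beta\in\cR[\cC]$ then each $\widehat{\ell}^c\alpha\in\cL$ lifts against each $\widehat{m}^c\beta\in\cR$ since $(\cL,\cR)$ is a weak factorization system, so $\alpha$ lifts against $\beta$; this yields $\cL[\cC]\subseteq{}^\boxslash(\cR[\cC])$ and $\cR[\cC]\subseteq(\cL[\cC])^\boxslash$.

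Next I would produce factorizations by the dual induction (all the weighted limits, colimits, and factorizations in sight exist by the completeness, cocompleteness, and smallness hypotheses). Given $f\colon X\to Y$, suppose the intermediate functor $Z$ has been defined on degrees $<n$ together with $X\to Z\to Y$; for each $c$ of degree $n$ I factor the canonical comparison $X(c)\cup_{L^cX}L^cZ\to M^cZ\times_{M^cY}Y(c)$ — which is determined by $f$ and the lower-degree data — as an $\cL$-map followed by an $\cR$-map in $\cD$, and take $Z(c)$ to be the intermediate object. By the very definitions of \cref{def:relative_matching_latching_maps}, the left factor is $\widehat{\ell}^c(X\to Z)$ and the right factor is $\widehat{m}^c(Z\to Y)$, so $X\to Z\in\cL[\cC]$ and $Z\to Y\in\cR[\cC]$; the remaining work is the routine check that these pointwise choices assemble into a functor $Z$ and natural transformations, using functoriality of the latching and matching constructions. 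With factorizations available and both classes closed under retracts — immediate, since the relative latching and matching maps are functorial and $\cL,\cR$ are retract-closed — the usual retract argument upgrades the two inclusions above to the equalities $\cR[\cC]=(\cL[\cC])^\boxslash$ and $\cL[\cC]={}^\boxslash(\cR[\cC])$, so that $(\cL[\cC],\cR[\cC])$ is a weak factorization system.

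For the cofibrant generation clause I would invoke the Leibniz (pullback-power) form of the two-variable adjunction of \cref{rem:weighted_limitcolimit_hom_2varadj}. Since $\widehat{m}^c\beta$ is by construction the Leibniz weighted limit of the boundary inclusion $\partial\cC_c\to\cC_c$ against $\beta$, this adjunction identifies lifting problems of $j$ against $\widehat{m}^c\beta$ in $\cD$ with lifting problems of $\beta$ against the Leibniz product $(\partial\cC_c\to\cC_c)\,\widehat{\ast}\,j$ in $\cD^\cC$. Hence, assuming $\cR=\cJ^\boxslash$, one gets $\beta\in(\cB\,\widehat{\ast}\,\cJ)^\boxslash$ iff $\widehat{m}^c\beta\in\cJ^\boxslash=\cR$ for every $c$ iff $\beta\in\cR[\cC]$, i.e.\ $\cR[\cC]=(\cB\,\widehat{\ast}\,\cJ)^\boxslash$. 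I expect the main obstacle to lie in the first step: setting up the degreewise lifting and factorization problems so that the assembled filler and the assembled intermediate diagram are automatically natural, which is exactly where the Reedy factorization axiom and the interplay between the covariant boundaries (controlling matching) and the contravariant boundaries (controlling latching) must be handled with care.
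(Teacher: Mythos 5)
Your proposal is correct and follows essentially the same route the paper implicitly relies on: the theorem is stated here as background and cited to the Riehl--Verity treatment of Reedy categories, and your degreewise lifting correspondence, degreewise factorization through the comparison map $X(c)\cup_{L^cX}L^cZ\to M^cZ\times_{M^cY}Y(c)$, retract argument, and Leibniz-adjunction identification $\cR[\cC]=(\cB\,\widehat{\ast}\,\cJ)^{\boxslash}$ are exactly the steps of that standard proof. The only point you defer rather than execute, namely that the degreewise choices assemble into natural transformations via the unique Reedy factorization axiom, is the same point the cited source handles, and you flag it accurately.
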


When working with model structures we care about Quillen functors and bifunctors, as they preserve the right amount of information by preserving some of the distinguished classes of morphisms. Since such classes come from weak factorization system, the question of whether we can abstract this to categories with weak factorization systems instead of a full model structure arises. In the following we see a way to do this.

\begin{defn}\label{def:left_Leibniz_bifunctor} Let $\cV,\cM,$ and $\cN$ be cocomplete categories each equipped with a weak factorization system: $(\cL,\cR), (\cL',\cR'),$ and $(\cL'',\cR'')$ respectively. A \emph{left Leibniz bifunctor} is a bifunctor $\otimes\colon\cV\times\cM\to\cN$ that is
\begin{rome}
    \item cocontinuous in each variable, and
    \item has the \emph{Leibniz property}: $\otimes$-pushout products of a map in $\cL$ with a map in $\cL'$ are in $\cL''$.
\end{rome}

Dually, a bifunctor between complete categories equipped with weak factorization systems is a \emph{right Leibniz bifunctor} if it is continuous in each variable and if pullback cotensors of maps in the right classes land in the right class. 
\end{defn}

\begin{lem}{\cite[C.2.11]{elements}}\label{lem:Leibniz_two_variable_adj} If the bifunctors 
$$\cV\times\cM\xrightarrow{\otimes}\cN,\hspace{1em} \cV^\op\times\cN\xrightarrow{\{,\}}\cM,\text{\hspace{1em} and\hspace{1em}} \cM^\op\times\cN\xlongrightarrow{\mathrm{hom}}\cV$$
define a two-variables adjunction, and $(\cL,\cR), (\cL',\cR'),$ and $(\cL'',\cR'')$ are three weak factorization systems on $\cV,\cM,$ and $\cN$ respectively, then the following are equivalent. 
\begin{enumerate}
    \item $\otimes\colon\cV\times\cM\to\cN$ defines a left Leibniz bifunctor.
    \item $\{,\}\colon \cV^\op\times\cN\to\cM$ defines a right Leibniz bifunctor. 
    \item $\mathrm{hom}\colon \cM^\op\times\cN\to\cV$ defines a right Leibniz bifunctor.
\end{enumerate}
\end{lem}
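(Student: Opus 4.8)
The plan is to reduce all three conditions to a single symmetric \emph{ternary} lifting condition and observe that they coincide. The essential input is the adjoint correspondence of Leibniz lifting problems: for any maps $f$ in $\cV$, $g$ in $\cM$, and $h$ in $\cN$, there are natural bijections between the solutions of the three lifting problems
$$ f\,\widehat{\otimes}\,g \boxslash h, \qquad g \boxslash \widehat{\{f,h\}}, \qquad f \boxslash \widehat{\mathrm{hom}}(g,h), $$
where $f\,\widehat{\otimes}\,g$ in $\cN$ is the pushout-product of \cref{leibniz_construction}, and $\widehat{\{f,h\}}$ in $\cM$ and $\widehat{\mathrm{hom}}(g,h)$ in $\cV$ denote the dual pullback-cotensor and pullback-hom Leibniz maps built from the other two bifunctors. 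In particular each of these lifting problems is solvable exactly when the other two are.

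First I would establish this correspondence. Starting from the two-variable adjunction $\cN(V\otimes M,N)\cong\cM(M,\{V,N\})\cong\cV(V,\mathrm{hom}(M,N))$, a commutative square presenting a lifting problem for $f\,\widehat{\otimes}\,g$ against $h$ is assembled, by \cref{leibniz_construction}, out of the pushout defining the domain of $f\,\widehat{\otimes}\,g$. Since each bifunctor is (co)continuous in each variable and the three are linked by the adjunction (\cref{rem:weighted_limitcolimit_hom_2varadj}), the variable-fixed functors are adjoints and hence carry the pushout defining $f\,\widehat{\otimes}\,g$ to the pullback defining $\widehat{\{f,h\}}$, respectively $\widehat{\mathrm{hom}}(g,h)$. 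Transposing the square across the adjunction isomorphisms and chasing the universal properties of the pushout and the pullback then identifies the data of a lift on one side with the data of a lift on another, naturally in $f$, $g$, and $h$. This is the step I expect to be the main obstacle: it requires carefully tracking those universal properties through the adjunction transposition, whereas everything around it is formal.

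With the correspondence in hand, I would translate each of (1)--(3) into a lifting statement using the defining property of a weak factorization system, namely $\cR=\cL^\boxslash$ and $\cL={}^\boxslash\cR$ for each of $(\cL,\cR)$, $(\cL',\cR')$, and $(\cL'',\cR'')$. Condition (1) asserts $f\,\widehat{\otimes}\,g\in\cL''$ whenever $f\in\cL$ and $g\in\cL'$; since $\cL''={}^\boxslash\cR''$, this is equivalent to demanding $f\,\widehat{\otimes}\,g\boxslash h$ for all $f\in\cL$, $g\in\cL'$, and $h\in\cR''$. Condition (2) asserts $\widehat{\{f,h\}}\in\cR'$ whenever $f\in\cL$ and $h\in\cR''$; since $\cR'=(\cL')^\boxslash$, this is equivalent to $g\boxslash\widehat{\{f,h\}}$ for the same triples. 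Likewise condition (3) unfolds to $f\boxslash\widehat{\mathrm{hom}}(g,h)$ for all such triples. By the adjoint correspondence of the previous paragraph, these three universally quantified lifting conditions are verbatim the same statement, so (1), (2), and (3) are equivalent.
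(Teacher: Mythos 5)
Your argument is correct and is precisely the standard proof of this result: the two-variable adjunction induces a bijective correspondence between lifting problems (and lifts) for $f\,\widehat{\otimes}\,g$ against $h$, for $g$ against $\widehat{\{f,h\}}$, and for $f$ against $\widehat{\mathrm{hom}}(g,h)$, after which each of (1)--(3) unfolds, via $\cL''={}^\boxslash\cR''$, $\cR'=(\cL')^\boxslash$, and $\cR=\cL^\boxslash$, to the same universally quantified lifting statement. The paper itself gives no proof, citing \cite[C.2.11]{elements}, and your argument matches the one given there.
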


\begin{lem}{\cite[C.5.16]{elements}}\label{lem:weightedlimcolim_Leibniz} For any complete and cocomplete category $\cM$ with a weak factorization system $(\cL,\cR)$, and any Reedy category $\cJ$ the weighted colimit and weighted limit bifunctors
$$\mathrm{colim}\_\colon\Set^\cL\times\cM^{\cJ^\op}\to\cM\hspace{2em} and \hspace{2em} \mathrm{lim}\_\colon (\Set^\cJ)^\op\times\cM^\cJ\to\cM$$
are respectively left and right Leibniz bifunctors with respect to the Reedy weak factorization systems.
\end{lem}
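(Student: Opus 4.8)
The two asserted statements are formally dual, so the plan is to prove only the colimit half and deduce the limit half. Indeed, the weighted limit $\{W,G\}^\cJ$ in $\cM$ is the weighted colimit $W\ast_{\cJ^\op}G$ computed in $\cM^\op$, and passing from $\cM$ to $\cM^\op$ interchanges the latching and matching constructions as well as the two classes $\cL$ and $\cR$ of the weak factorization system, thereby turning the right Leibniz property into the left Leibniz property. (Alternatively one may invoke the two-variable adjunction of \cref{rem:weighted_limitcolimit_hom_2varadj} together with \cref{lem:Leibniz_two_variable_adj}.) It thus suffices to show that the weighted colimit $-\ast_\cJ-\colon\Set^{\cJ^\op}\times\cM^\cJ\to\cM$ is a left Leibniz bifunctor.

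Cocontinuity in each variable, which is condition (i) of \cref{def:left_Leibniz_bifunctor}, is immediate: cocontinuity in the weight is axiom (2) of \cref{weighted_colimit}, and cocontinuity in the diagram holds because colimits in $\cM^\cJ$ are computed pointwise and commute with the weighted colimit. The substance is the Leibniz property (ii). First I would fix a map $f\colon X\to Y$ in the left class of the Reedy weak factorization system on $\cM^\cJ$ — that is, a map all of whose relative latching maps $\widehat{\ell}^c f$ lie in $\cL$ — and consider the class $\mathcal{S}_f$ of those maps $w$ in $\Set^{\cJ^\op}$ for which the Leibniz weighted colimit $w\mathbin{\widehat{\ast}_\cJ}f$ lands in $\cL$. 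Because $-\ast_\cJ-$ is cocontinuous in the weight, the Leibniz construction carries pushouts, coproducts, transfinite composites, and retracts in the variable $w$ to the same constructions applied to $w\mathbin{\widehat{\ast}_\cJ}f$; since $\cL$ is closed under all of these (being the left class of a weak factorization system), $\mathcal{S}_f$ is saturated. It therefore suffices to check that $\mathcal{S}_f$ contains a generating set for the left class of the Reedy weak factorization system on the weight category $\Set^{\cJ^\op}$.

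By \cref{thm:Reedy_wfs}, applied to $\Set$ equipped with the (injection, surjection) weak factorization system, this left class is generated by the Leibniz products $b^c\mathbin{\widehat{\ast}}s$ of the boundary inclusions $b^c\colon\partial\cJ^c\to\cJ^c$ with the generating injection $s$ of $\Set$. For such a generator, associativity of the Leibniz construction lets me rewrite $(b^c\mathbin{\widehat{\ast}}s)\mathbin{\widehat{\ast}_\cJ}f$ as an iterated Leibniz product whose inner factor is $b^c\mathbin{\widehat{\ast}_\cJ}f$; and using the representable computation $\cJ^c\ast_\cJ X=X(c)$ together with $\partial\cJ^c\ast_\cJ X=L^cX$ from \cref{def:latching_matching_objects}, this inner factor is exactly the relative latching map $\widehat{\ell}^c f$ of \cref{def:relative_matching_latching_maps}. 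By the choice of $f$ this lies in $\cL$, and the remaining outer Leibniz factor against a generating injection of $\Set$ is a copower operation preserving $\cL$ (indeed trivial for the generator $\emptyset\to\mathbbm{1}$); hence $w\mathbin{\widehat{\ast}_\cJ}f\in\cL$, as required.

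The main obstacle I anticipate is the bookkeeping in the last step: making precise the associativity of the Leibniz construction across the weighted-colimit bifunctor and the $\Set$-copower, keeping the variances straight (the weight of a colimit lives in $\Set^{\cJ^\op}$ while that of a limit lives in $\Set^\cJ$), and verifying cleanly that the inner Leibniz product of a boundary inclusion with $f$ really is the relative latching map. A more hands-on alternative that sidesteps the abstract associativity is to run the argument as a skeletal induction along the Reedy filtration of $w$: at each stage one attaches cells along the boundary inclusions $b^c$, and $w\mathbin{\widehat{\ast}_\cJ}f$ is thereby exhibited as a transfinite composite of pushouts of copowers of the relative latching maps $\widehat{\ell}^c f$, all of which lie in $\cL$.
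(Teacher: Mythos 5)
The paper gives no proof of this lemma — it is quoted verbatim from \cite[C.5.16]{elements} — and your argument is correct and is essentially the proof given in that reference: reduce to the colimit case by duality (or the two-variable adjunction), use cocontinuity of the Leibniz construction to reduce to the generating Reedy monomorphisms $\partial\cJ^c\to\cJ^c$ in the weight, and observe that the Leibniz weighted colimit of a boundary inclusion with $f$ is precisely the relative latching map $\widehat{\ell}^c f$. The one point worth making explicit in a write-up is the cellularity step you flag yourself, namely that the left class of the cofibrantly generated Reedy weak factorization system on the weights is the retract closure of relative cell complexes on the boundary inclusions, so that saturation of $\mathcal{S}_f$ really does suffice.
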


\section{First order logic with dependent sorts (FOLDS)}\label{sec:folds}

First order logic with dependent sorts ($\folds$) was introduced by Makkai in \cite{makkai} in order to address in a systematic way the problem of determining what statements in higher category theory are equivalence invariant, with the right notion of equivalence in every case. Makkai's approach to do so is to create a language in which only statements that are invariant under the right notion of equivalence can be expressed in a meaningful way. These ideas have been used more recently to formulate and prove a general ``univalence principle'', see \cite{univalence_pple}.

To materialize these ideas in a particular framework one must develop three key components:
\begin{enumerate}
    \item\label{folds_language} The syntax, or language, in which we will express statements.
    \item\label{folds_objects} The abstract mathematical objects whose theory we want to study.
    \item\label{folds_interpretation} A way to interpret such language in terms of said mathematical objects.
\end{enumerate}

In this section we will focus on the construction of \cref{folds_language} and \cref{folds_interpretation}, and will make use of a more intuitive approach to \cref{folds_objects}.

We will not present a precise treatment of $\folds$ in this document, we follow \cite[Section 11.2]{elements} for the pieces we need. The interested reader may find an extensive treatment of $\folds$ in Makkai's original paper \cite{makkai}, and a short account with a more philosophical perspective in \cite{unfolding_folds}.

\subsection{Signature categories}

The language provided by Makkai's $\folds$ is determined by a given signature, that we now introduce. 

\begin{defn} A category $\cI$ is an \emph{inverse} category if there exists a functor $\deg\colon\cI\to\omega^\op$ that reflects identities.
\end{defn}

\begin{defn} A \emph{simple inverse} category is an inverse category $\cI$ that has the finite fan-out property, that is, that for any object $K$ of $\cI$, there are only finitely many morphisms with domain $K$.
\end{defn}

\begin{defn}
    A \emph{$\folds$ signature category} is a simple inverse category with a distinguished subset of maximal objects that we call relation symbols. We write a dot over the maximal objects that are relation symbols (note that there may not be any maximal object).
\end{defn}

\begin{terminology} We refer to the objects of a given $\folds$ signature as \emph{kinds}, whether or not they are relation symbols.
\end{terminology}

We denote $\folds$ signatures with an $\cL$, for ``language''.

\begin{rem}Let $\cL$ be a signature for $\folds$. Since $\cL$ is an inverse category, it comes equipped with a functor $\mathrm{deg}\colon\cL\to\omega^{\mathrm{op}}$ that determine ``levels'' for the set of objects of $\cL$. The objects at level $0$ are not the domain of non-identity morphisms.
\end{rem}

\begin{rem}\label{rem:dependency_maps}
Given a $\folds$ signature $\cL$, the morphisms between the objects embody the dependencies in the structure being described. More precisely, each morphism from one object to another of lower degree (including composites when pertinent) indicate a  parameter of the latter that the terms of the former depend on. See \cref{rem:dependency_cats} for an example in the signature category for $\cat$.
\end{rem}

\begin{rem}
Note that every object in a $\folds$ signature is finitely many ``steps'' from an object at degree $0$. This allow us to make definitions by recursion in the degree of the kinds.
\end{rem}

\begin{defn}\label{def:Lstructure}
For a given $\folds$ signature $\cL$, an $\cL$-structure is a functor $M\colon\cL\to \Set$ such that for each relation symbol $\dot{R}$ in $\cL$, the map induced by the family of non-identity arrows with domain $\dot{R}$ (depicted below) is a mono.
\begin{tz}
    \node[](1)      {$M\dot{R}$};
    \node[right of=1,xshift=1.5cm,yshift=-0.3cm](2)     {$\prod\limits_{p\colon \dot{R}\xrightarrow{\neq} K_p} MK_p$};
    \draw[>->] (1) to ($(2)+(-1cm,0.3cm)$);
\end{tz}    
\end{defn}

Before diving into more details of Makkai's dependent sorts, let us illustrate the previous notions in examples for categories and $2$-categories. These appear in \cite[Section 11.2]{elements}, and are originally due to Makkai.

\subsection*{The case of categories}

\begin{defn}\label{def:Icat}
The $\folds\,$ signature for categories, $\cL_\cat$ is given by 
\[
\begin{tikzcd}[row sep=large] 
\dot{I} \arrow[dr, "i" description] & \dot{T} \arrow[d, shift right=.6em, "\ell" description] \arrow[d, "r" description] \arrow[d, shift left=.6em, "c" description] & \dot{E} \arrow[dl, shift right=.3em, "\ell" description] \arrow[dl, shift left=.3em, "r" description] \\ 
& A \arrow[d, shift left=.3em, "t" description] \arrow[d, shift right=.3em, "s" description] \\ 
& O
\end{tikzcd}
\]
satisfying the relations
\begin{align*}
    s\cdot i &=t\cdot i         &           s\cdot\ell&=s\cdot \ell       &       s\cdot\ell &=s\cdot r\\
            &                   &           t\cdot c&=t\cdot r          &       t\cdot\ell     &=t\cdot r\\
            &                   &           t\cdot\ell&=s\cdot r           &       \\
\end{align*}

\end{defn}

The informal interpretation of this signature is that symbol $O$ represents the sort of objects, and $A$ that of arrows. While the relation symbols $\dot{I}, \dot{T},\text{ and }\dot{E}$ are respectively encoding identity arrows, ternary composition relations, and equality of parallel arrows. With this in mind, the equalities between composites in $\cL_\cat$ are clear.

\begin{rem}\label{rem:dependency_cats}
Let us look at \cref{rem:dependency_maps} in this particular case and preview syntax to be introduced in \cref{subsec:sorts_and_variables}. The simplest case is that of arrows: to write that the symbol $f$ is of kind  $A$ is not enough to write $f\colon A$, as $A$ is dependant on the kind $O$. Indeed, there are two arrows from $A$ to $O$, which means that we must consider first $x\colon O$ and $y\colon O$ to be able to write $f\colon A(x,y)$ indicating that $x$ denotes the domain of $f$, and $y$ its codomain. 

For $\dot{E}$, it depends on two parameters in $A$, which in turn each depends on two parameters in $O$, except the relations on $\cL_\cat$ tell us that sources and targets of the arrows coincide. So every element in $\dot{E}$ depends on $x,y\colon O$ and $f,g\colon A(x,y)$.
\end{rem}

The following construction gives us a natural way to obtain $\cL_\cat$-structures in a way that we recover categories as we know them.

\begin{constr}\label{def:Dcat}
    Let us now consider the functor $D_\cat\colon \cL_\cat^\op\to\cat$ whose image is represented by 

\[
\begin{tikzcd}[row sep=large]
\mathbb{1}      &\mathbb{3}         &\mathbb{2}\\
                &\mathbb{2}\arrow[ul, "!"] \arrow[u, shift left=.5em, ""]\arrow[u]\arrow[u, shift right=.5em, ""]\arrow[ur, shift left=.3em, "\id"]\arrow[ur, shift right=.3em, "\id"']       &\\
                &\mathbb{1}\arrow[u, shift left=.3em, "0"]\arrow[u, shift right=.3em, "1"']
\end{tikzcd}
\]
\end{constr}

Mapping out of $D_\cat$ gives us the functor $M\_\colon\cat\to\Set^{\cL_\cat}$ that takes values in the full subcategory of $\cL_\cat$-structures. With this, for any category $\cC$, the image of each kind is exactly what we want: $M_\cC O\coloneqq\hom(\mathbb{1},\cC)$ is the set of objects, $M_\cC A\coloneqq\hom(\mathbb{2},\cC)$ is the set of arrows, and $M_\cC \dot{I}, M_\cC \dot{T}$, and $M_\cC \dot{E}$ encodes identity arrows, commutative triangles, and equality of arrows. 

\begin{rem}\label{Kan_for_Dcat}
The above constructions fit together in the following diagram\textemdash the full power of setting things in this way will be apparent in \cref{sec:homotopy_meets_folds}.

\begin{tz}
\node[](1)  {$\cL^\op_{\cat}$};
\node[below of=1,xshift=-1.5cm,yshift=-0.60cm](2) {$\cat$};
\node[below of=1,xshift=1.5cm,yshift=-0.60cm](3) {$\Set^{\cL_{\cat}}$};

\draw[->] (1) to node[left,la]{$D_\cat$} (2);
\draw[->] (1) to node[right,la]{$\yo$} (3);

\draw[->, bend right=20] ($(2.east)-(-4pt,5pt)$) to node[below,la]{$\hom(D_\cat-,\cat)$} ($(3.west)-(0,5pt)$);
\draw[->, bend right=20] ($(3.west)+(0,5pt)$) to node[above,la]{$\Lan_\yo D_\cat$} ($(2.east)+(4pt,5pt)$);

\node[la] at ($(2.east)!0.5!(3.west)$) {$\bot$};
\end{tz}

\end{rem}

\subsection*{The case of $2$-categories}

\begin{defn}\label{def:L2cat}The $\folds\,$ signature for $2$-categories, $\cL_{\twocat}$ is given by 
\[
\begin{tikzcd}[row sep=large]
& \dot{V}  \arrow[d, shift right=.6em, "\ell" description] \arrow[d, "c" description] \arrow[d, shift left=.6em, "r" description] &  \dot{E} \arrow[dl, shift right=.3em, "\ell" description] \arrow[dl, shift left=.3em, "r" description] \\
\dot{H} \arrow[r, shift right=.6em, "\ell" description] \arrow[r, "c" description] \arrow[r, shift left=.6em, "r" description] \arrow[d, shift left=.3em, "t" description] \arrow[d, shift right=.3em, "s" description]& C_2 \arrow[d, shift left=.3em, "t" description] \arrow[d, shift right=.3em, "s" description]& \dot{I}_2 \arrow[l, "i" description] \\
T  \arrow[r, shift right=.6em, "\ell" description] \arrow[r, "c" description] \arrow[r, shift left=.6em, "r" description] & C_1\arrow[d, shift left=.3em, "t" description] \arrow[d, shift right=.3em, "s" description] & I_1 \arrow[l, "i" description] \\
& C_0
\end{tikzcd}
\]

Naturally, to encode the structure of $2$-categories we must ask $\cL_{\twocat}$ to satisfy several relations. Namely, the relations of its underlying category (except for equality of parallel arrows, since that is not a relevant notion in this context),
\begin{align*}
    s\cdot i &=t\cdot i         &           s\cdot\ell&=s\cdot \ell  &\\
            &                   &           t\cdot c&=t\cdot r    &\\
            &                   &           t\cdot\ell&=s\cdot r,    &     \\
\end{align*}
the relations describing the shape of the $2$-cells
\begin{align*}
s\cdot s &=s\cdot t     &t\cdot s&=t\cdot t,
\end{align*}
relations describing vertical composition, identities, and equality between parallel $2$-cells, 
\begin{align*}
    s\cdot c &= s\cdot \ell     &s\cdot i&=t\cdot i     &s\cdot \ell&= s\cdot r\\
    t\cdot c &=t\cdot r         &         &             &t\cdot\ell&=t\cdot r\\
    t\cdot\ell&=s\cdot r,
\end{align*}
and finally relations linking horizontal composition of $2$-cells with composition of $1$-cells
\begin{align*}
    s\cdot\ell&=\ell\cdot s      &s\cdot r&=r\cdot s            &s\cdot c&=c\cdot s\\
    t\cdot\ell&=\ell\cdot t        &t\cdot r&=r\cdot t          &t\cdot c&=c\cdot t.
\end{align*} 
    
\end{defn}

Similarly to what we did for categories, we construct a diagram with values in $\twocat$. 

\begin{constr}\label{def:E2cat}

Now consider the functor $E_{\twocat}\colon\cL_{\twocat}^\op\to\twocat$ defined by

\[
\begin{tikzcd}
& \Sigma[\mathbb{3}]  \arrow[from=d, shift right=.6em] \arrow[from=d] \arrow[from=d, shift left=.6em] &  \Sigma[\mathbb{2}] \arrow[from=dl, shift right=.3em] \arrow[from=dl, shift left=.3em] \\
\mathbb{H}_{=} \arrow[from=r, shift right=.6em] \arrow[from=r] \arrow[from=r, shift left=.6em] \arrow[from=d, shift left=.3em] \arrow[from=d, shift right=.3em]& \Sigma[\mathbb{2}] \arrow[from=d, shift left=.3em] \arrow[from=d, shift right=.3em]& \mathbb{2} \arrow[from=l] \\
\mathbb{3}  \arrow[from=r, shift right=.6em] \arrow[from=r] \arrow[from=r, shift left=.6em] & \mathbb{2} \arrow[from=d, shift left=.3em] \arrow[from=d, shift right=.3em]\arrow[r]       &\mathbb{1}\\
& \mathbb{1} & 
\end{tikzcd} \qquad
\]

where
\[ \quad 
\Sigma\mathbb{2} \coloneqq
\begin{tikzcd} \bullet \arrow[r, bend left] \arrow[r, bend right] \arrow[r, phantom, "\scriptstyle\Downarrow"] & \bullet 
\end{tikzcd}, \quad 
\Sigma[\mathbb{3}] \coloneqq
\begin{tikzcd}\bullet \arrow[r] \arrow[r, bend left=50, "\Downarrow"'] \arrow[r, bend right=50, "\Downarrow"] & \bullet
\end{tikzcd}, 
\quad \text{and} \quad
 \mathbb{H}_{=} \coloneqq
\begin{tikzcd} 
\bullet \arrow[r, bend left]  \arrow[r, bend right] \arrow[r, phantom, "\scriptstyle\Downarrow"] & \bullet \arrow[r, bend left] \arrow[r, bend right] \arrow[r, phantom, "\scriptstyle\Downarrow"] & \bullet 
\end{tikzcd}
\]

\end{constr}

\subsection{Dependent sorts and their variables}\label{subsec:sorts_and_variables} We are finally ready to start introducing the language determined by any given signature category $\cL$. For this we also follow \cite[Section 11.2]{elements}, which defines sorts and variables by mutual recursion and introduces the variables belonging to a \emph{context}. The variables can be introduced separately as done in \cite{makkai} and \cite{unfolding_folds}, although that approach assumes greater familiarity with first order logic.

\begin{defn}
Given a $\folds$ signature $\cL$, a context is an $\cL$-structure $\Gamma\colon\cL\to\Set$ in which the sets $\Gamma K$ associated to each kind $K$ in $\cL$ are disjoint.
\end{defn}

We proceed to simultaneously define sorts and their variables.

\begin{defn} Consider a $\folds$ signature $\cL$ and a context $\Gamma\colon\cL\to\Set$. 
\begin{itemize}
    \item Each kind $K$ of degree zero defines a \emph{sort}, that we also denote by $K$, whose \emph{variables} are the elements of the set $\Gamma K$. We  write ``$x: K$'' to mean that $x\in\Gamma K$, that is, that $x$ is a variable belonging to the sort $K$ in context $\Gamma$.
    \item For each kind $K$ of degree one, the matching map $m^K$ of definition \cref{def:matching_latching_maps} becomes 
    \begin{tz}
        \node[](1) {$\Gamma K$};
        \node[right of=1,xshift=1.5cm,yshift=-0.3cm](2) {$\prod\limits_{p\colon K\xrightarrow{\neq} K_p} \Gamma K_p$};
        \draw[->] (1) to node[above,la]{$m^K$} ($(2)+(-0.8cm,0.3cm)$);
    \end{tz}
    where the product on the right is over all arrows in $\cL$ with domain $K$ and codomain of degree zero. For any family of variables $\{x_p\colon K_p\}_{p\colon K\xrightarrow{\neq} K_p}$, there is a sort $K\langle x_p\rangle$ whose variables are the elements of the fiber
    \begin{tz}
        \node[](1) {$\Gamma K_{\langle x_p\rangle}$};
        \node[right of=1,xshift=1.5cm](2) {$\Gamma K$};
        \node[below of=1,yshift=-.3cm](3) {$1$};
        \node[below of=2,yshift=-.3cm](4) {$\prod\limits_{p\colon K\xrightarrow{\neq} K_p} \Gamma K_p$};

        \draw[->] (1) to (2);
        \draw[->] (1) to (3);
        \draw[->] (2) to node[right,la]{$m^K$} (4);
        \draw[->] (3) to node[below,la]{$\langle x_p\rangle$} (4);
    \end{tz}
    The variables of $K\langle x_p\rangle$ are said to \emph{depend on} the variables $x_p\colon K_p$.
    \item Finally, let $K$ be a kind of higher degree. A family of variables $$\{x_p\colon K_p\langle x_{qp}\rangle\}_{p\colon K\xrightarrow{\neq} K_p}$$ is \emph{compatible} if $\langle x_p\rangle\in \prod_{p\colon K\xrightarrow{\neq} K_p} \Gamma K_p $ belongs to the matching object $\partial^K\Gamma$; in other words, this says that the higher degree variables in the list depend on the lower degree ones in the way determiend by the dependency relations codifying in the $\folds$ signature. Given a compatible family of variables, there is a sort $K\langle x_p\rangle$ whose variables are the elements of the fiber of the matching map $m^K$ over $\langle x_p\rangle\in\partial^K\Gamma$.
    \begin{tz}
        \node[](1) {$\Gamma K_{\langle x_p\rangle}$};
        \node[right of=1,xshift=1.5cm](2) {$\Gamma K$};
        \node[below of=1,yshift=-.3cm](3) {$1$};
        \node[below of=2,yshift=-.3cm](4) {$\partial^K\Gamma$};

        \draw[->] (1) to (2);
        \draw[->] (1) to (3);
        \draw[->] (2) to node[right,la]{$m^K$} (4);
        \draw[->] (3) to node[below,la]{$\langle x_p\rangle$} (4);
    \end{tz}
\end{itemize}
\end{defn}

When specifying a sort, the list of variables it depends on can quickly become impractical to handle, and for that reason it is often the case that only the variables of highest degree are listed\textemdash exploiting the fact that the lower degree variables can be deduced from the sorts to which the higher degree ones belong.

\begin{ex}Consider the $\folds$ signature category $\cL_\cat$, and a context $\Gamma\colon\cL\to \Set$. For the kind $O$ we have the sort $\Gamma O$ whose variables would represent the objects of a category. For the kind $A$, the sort is given by the pullback

\begin{tz}
    \node[](1) {$\Gamma A_{\langle x,y\rangle}$};
    \node[right of=1,xshift=1.5cm](2) {$\Gamma A$};
    \node[below of=1,yshift=-.3cm](3) {$1$};
    \node[below of=2,yshift=-.3cm](4) {$\Gamma O\times \Gamma O$};

    \draw[->] (1) to (2);
    \draw[->] (1) to (3);
    \draw[->] (2) to node[right,la]{$(s,t)$} (4);
    \draw[->] (3) to node[below,la]{$\langle x,y\rangle$} (4);
\end{tz}

The variables of this sort are what correspond to arrows between from $x$ to $y$ \textemdash as specified in the definition, the variables of this sort depend on the variables $x,y$. 

For the sort associated to the relation symbol $\dot{T}$, we consider the pullback below

\begin{tz}
    \node[](1) {$\Gamma \dot{T}_{\langle f,g,h\rangle}$};
    \node[right of=1,xshift=1.5cm](2) {$\Gamma \dot{T}$};
    \node[below of=1,yshift=-.3cm](3) {$1$};
    \node[below of=2,yshift=-.3cm](4) {$\Gamma A\times_{\Gamma O} \Gamma A\times_{\Gamma O}\Gamma A\times_{\Gamma O}$};

    \draw[->] (1) to (2);
    \draw[->] (1) to (3);
    \draw[->] (2) to (4);
    \draw[->] (3) to node[below,la]{$\tau$} (4);
\end{tz}
where $\tau\colon 1\to \Gamma A\times_{\Gamma O} \Gamma A\times_{\Gamma O}\Gamma A\times_{\Gamma O}$ picks a triangle 
\begin{tz}
\node[](1)      {$x$};
\node[right of=1](2)    {$z$};
\node[above of=1,xshift=0.75cm,yshift=-0.75cm](3)  {$y$};

\draw[->] (1) to node[la,above,pos=0.3]{$f$}    (3);
\draw[->] (3) to node[la,above,pos=0.7]{$g$}    (2);
\draw[->] (1) to node[la,below]{$h$}    (2);
\end{tz}

For the sort associated to the relation symbol $\dot{E}$, we consider the pullback below

\begin{tz}
    \node[](1) {$\Gamma \dot{E}_{\langle f,g\rangle}$};
    \node[right of=1,xshift=1.5cm](2) {$\Gamma \dot{E}$};
    \node[below of=1,yshift=-.3cm](3) {$1$};
    \node[below of=2,yshift=-.3cm](4) {$\Gamma A _{\Gamma O}\times_{\Gamma O} \Gamma A$};

    \draw[->] (1) to (2);
    \draw[->] (1) to (3);
    \draw[->] (2) to (4);
    \draw[->] (3) to node[below,la]{$\alpha$} (4);
\end{tz}

where the map $\alpha\colon 1\to \Gamma A _{\Gamma O}\times_{\Gamma O} \Gamma A$ pics two parallel arrows

\begin{tz}
\node[](1) {$x$};
\node[right of=1](2) {$y$};

\draw[->] (1) to [bend left=25] node[above,la]{$f$} (2);
\draw[->] (1) to [bend right=25] node[below,la]{$g$} (2);
\end{tz}

Finally, for the sort associated to the relation symbol codifying identity arrows $\dot{I}$, we consider the pullback 

\begin{tz}
    \node[](1) {$\Gamma \dot{I}_{\langle x\rangle}$};
    \node[right of=1,xshift=1.5cm](2) {$\Gamma \dot{I}$};
    \node[below of=1,yshift=-.3cm](3) {$1$};
    \node[below of=2,yshift=-.3cm](4) {$\mathrm{eq}(\Gamma A \rightrightarrows\Gamma O)$};

    \draw[->] (1) to (2);
    \draw[->] (1) to (3);
    \draw[->] (2) to (4);
    \draw[->] (3) to node[below,la]{$x$} (4);
\end{tz}
\end{ex}

Having presented the building blocks of the formal language associated to a $\folds$ signature $\cL$, we now proceed to introduce its formulae and sentences. We also build these by recursion, starting with atomic formulae. 

\begin{defn}\label{def:atomic_formula} Let $\cL$ be $\folds$ signature category and $\Gamma\colon\cL\to\Set$ a context. An \emph{atomic formula} in the logic with dependent sorts is an entity of the form $\dot{R}\langle x_p\rangle$, where $\dot{R}$ is a relation symbol and the variables $\{x_p\colon K_p\langle x_{qp}\rangle\}_{p\colon \dot{R}\xrightarrow{\neq}K_p}$ is a compatible family, meaning that each such family defines an element $\langle x_p\rangle\in\partial^{\dot{R}}\Gamma$.
\end{defn}

The main rule to understand predicates in $\folds$ is that the quantification, either universal or existential, can only be asserted over the variables in a specified sort, provided the variables in the predicate  under consideration do not depend on the variable being quantified over.

\begin{defn}
We define \emph{formulae} $\phi$ and their sets of \emph{free variables} $\var{\phi}$ by simultaneous recursion.
\begin{itemize}
    \item An atomic formula is a formula, and the variables of $\dot{R}\langle x_p\rangle$ are the $x_p$.
\end{itemize}
Compound formulae are defined inductively from other formulae via the following rules:
\begin{itemize}
    \item $\top,\bot$ are formulae, with $\var(\top)=\var(\bot)\coloneqq\emptyset$. 
    \item Formulae may be combined using the sentential connectives $\wedge,\vee,$ and $\to$, in which case the variables are given by unions: e.g.\ $\var(\phi\to\psi)=\var(\phi)\cup\var(\psi).$
    \item When $\phi$ is a formula and $x$ is a variable such that no variable in $\var(\phi)$ depends on $x$\textemdash it is, however, permitted that $x\in\var{\phi}$\textemdash then $\forall x\phi$ and $\exists x \phi$ are well-formed formulae whose variables are given by the set
    $$\var(\forall x\phi)=\var(\exists x\phi)\coloneqq (\var(\phi)-\{x\})\cup\mathrm{dep}(x)$$
    formed by removing $x$ from the variables of $\phi$ if it appears and then adding all the variables on which $x$ depends if they do not already appear.
\end{itemize}
\end{defn}

\begin{rem}
    Given a context $\Gamma$ and a formula $\phi$, we have that $\var(\phi)\subset\Gamma$ is another context.
\end{rem}

\begin{defn}A formula $\phi$ is a \emph{sentence} when $\var(\phi)=\emptyset$. 
\end{defn}

\begin{defn}Let $\cL$ be a $\folds$ signature, $M$ an $\cL$-structure, and $\Gamma\colon\cL\to\Set$ an $\cL$-context. An \emph{evaluation} of $\Gamma$ in $M$ is given by a natural transformation $\alpha\colon\Gamma\to M$. 
\end{defn}

\begin{rem}
    An evaluation of an $\cL$-context $\Gamma$ in an $\cL$-structure $M$ defines an interpretation of its variables.
\end{rem}

We now define what it means for an $\cL$-structure $M$ to satisfy a formula $\phi$ under a given interpretation of its variables $\alpha\colon\var(\phi)\to M$, by induction in the complexity of the formula $\phi$. Note that it suffices to interpret the variables of a given formula.

\begin{defn}
    An $\cL$-structure $M$ \emph{satisfies} an atomic formula $\dot{R}\langle x_p\rangle$ under an interpretation if and only if the tuple $\langle \alpha x_p\rangle\in \prod_{p\colon\dot{R}\to K_p}MK_p$ lies in the subset $M\dot{R}$, in which case we write $M\vDash \dot{R}[\alpha].$
\end{defn}

The sentences $\top$ and $\bot$ have no variables, so their semantics are independent of any interpretation. Any $\cL$-structure satisfies $\top$ and no $\cL$-structure satisfies $\bot$.

\begin{defn}
    An $\cL$-structure $M$ \emph{satisfies} compound formulas built from $\phi$ and $\psi$ under an interpretation $\alpha\colon\var(\phi)\cup\var(\psi)\to M$ according to the following rules.
    \begin{enumerate}
        \item $M\vDash (\phi\wedge\psi)[\alpha]$ if and only if $M\vDash\phi[\alpha]$ and $M\vDash\psi[\alpha]$.
        \item $M\vDash (\phi\vee\psi)[\alpha]$ if and only if $M\vDash\phi[\alpha]$ or $M\vDash\psi[\alpha]$.
        \item $M\vDash (\phi\to\psi)[\alpha]$ if and only if whenever $M\vDash\phi[\alpha]$ then also $M\vDash\psi[\alpha]$.
    \end{enumerate}
\end{defn}

\begin{defn}
    Consider a formula $\forall x\phi$ where $x\colon K\langle x_p\rangle$ together with an interpretation $\alpha$ in an $\cL$-structure $\alpha\colon\var(\forall x\phi)\to M$.\footnote{Note that $x\notin\var(\forall x\phi)$, so $\alpha$ does not give an interpretation of the variable $x$ itself.} 
    \begin{itemize}
        \item We say that $M$ \emph{satisfies} the formula $\forall x\phi$ under the interpretation $\alpha$ if for all $a$ in the fiber of $MK\to\partial^K M$ over $\langle\alpha x_p\rangle$, $M\vDash\phi(a/x)[\alpha]$. That is, $M\vDash\forall x\phi[\alpha]$ if any $a$ with appropriate dependencies can be substituted for $x$ in the interpretation of $\phi$ to yield a formula that $M$ satisfies.
        \item We say that $M$ \emph{satisfies} the formula $\exists x\phi$ under the interpretation $\alpha$ if there is some $a$ in the fiber of $MK\to\partial^K M$ over $\langle\alpha x_p\rangle$, such that $M\vDash\phi(a/x)[\alpha]$. That is, $M\vDash\exists x\phi[\alpha]$ if some $a$ with appropriate dependencies can be substituted for $x$ in the interpretation of $\phi$ to yield a formula that $M$ satisfies.
    \end{itemize}    
\end{defn}

\begin{rem} For the relation symbols, we will often write $\dot{R}(r)$ instead of $\exists r:\dot{R}$ to say that it's inhabited by $r$.
\end{rem}

Now that we have introduced the language, in the next examples we illustrate the difference between sentence and formula.

\begin{ex}
$\qquad$
\begin{enumerate}    
\item The formula 
$$\exists f: A(x,x).\dot{I}(f)$$
in $\cL_\cat$ is not a sentence as the set of free variables is $\{x\}\neq\emptyset$; this says that the particular object $x$ has an identity. A sentence saying that for every $x\in O$ there is an identity arrow would be
$$\forall x: O.\exists f: A(x,x).\dot{I}(f)$$

\item The formula
    $$\exists\tau :\dot{T}(f,g,h,x,y,z)$$
    is not a sentence, as its free variables are $\{x,y,z,f,g,h\}$. However, it is a sentence its universal closure:
    $$\forall x: O.\forall y: O.\forall z: O.\forall f: A(x,y).\forall g: A(y,z).\exists h: A(x,z).\dot{T}(f,g,h)$$
saying that for every pair of composable arrows, there is a composite.

\item The formula 
$$\exists g:A(y,x).\exists\id_y:A(y,y).\exists\id_x:A(x,x).\dot{T}(f,g,\id_x)\wedge\dot{T}(g,f,\id_y)\wedge\dot{I}(\id_y)\wedge\dot{I}(\id_x)$$
is not a sentence, as its free variables are $\{x,y,f\}$. We can bind all its variables as below, to obtain the sentence ``every arrow has an inverse'', which is the universal closure of the formula above:
$$\forall x: O.\forall y\colon O.\forall f: A(x,y).\exists g: A(y,x).\exists\id_y:A(y,y).\exists\id_x:A(x,x)$$
$$\hspace{10em}.\dot{T}(f,g,\id_x)\wedge\dot{T}(g,f,\id_y)\wedge\dot{I}(\id_y)\wedge\dot{I}(\id_x)$$
\end{enumerate}
\end{ex}

We now move towards describing when two different $\cL$-structures satisfy the same formulae.

\begin{defn}\label{def:fiberwise_surj}
A natural transformation of $\cL$-structures, $\rho\colon M\to N$, is fiberwise surjective if for every object $K$ in $\cL$ the map to the pullback in the ``matching square'' is an epimorphism. 

\begin{tz}
\node[](1) {$\bullet$}; 
\node[right of=1,xshift=0.5cm](2) {$M^KX$}; 
\node[below of=1](1') {$Y(K)$}; 
\node[below of=2](2') {$M^KY$}; 

\draw[->] (1) to (2); 
\draw[->] (1) to (1'); 
\draw[->] (2) to node[right, la]{$M^K\rho$} (2'); 
\draw[->] (1') to node[below, la]{$m^K$} (2');

\node[above left of=1,xshift=-.5cm](5) {$X(K)$};
\draw[->,bend left] (5) to node[above,la]{$m^K$} (2); 
\draw[->,bend right] (5) to node[below,la, xshift=-2pt]{$\rho^K$} (1'); 
\draw[->>,dashed] (5) to node[above,la,pos=0.5,xshift=2pt]{$\widehat{m}^K\rho$} (1);
\node at ($(1)-(-.3cm,.3cm)$) {$\lrcorner$};
\end{tz}

\end{defn}

\begin{ex}
Let us unpack what this means in a particular case. Consider $\cL_\cat$ the signature category for categories, let $M,N\colon\cL_\cat\to \Set$ be two $\cL_\cat$-structures, and $\rho\colon M\to N$ a fibjerwise surjective natural transformation. 

For the kind $O$, the condition as depicted below
\begin{tz}
\node[](1) {$N(O)$}; 
\node[right of=1,xshift=0.5cm](2) {$\ast$}; 
\node[below of=1](1') {$N(O)$}; 
\node[below of=2](2') {$\ast$}; 

\draw[->] (1) to (2); 
\draw[->] (1) to (1'); 
\draw[->] (2) to (2'); 
\draw[->] (1') to  (2');

\node[above left of=1,xshift=-.5cm](5) {$M(O)$};

\draw[->,bend left] (5) to  (2); 
\draw[->,bend right] (5) to node[below,la, xshift=-2pt]{$\rho_O$} (1'); 
\draw[->>,dashed] (5) to (1);
\node at ($(1)-(-.3cm,.3cm)$) {$\lrcorner$};
\end{tz}
codifies surjectivity on objects. 

For the kind $A$:
\begin{tz}
\node[](1) {$\{(x,y,f)\}$}; 
\node[right of=1,xshift=1.5cm](2) {$M(O)\times M(O)$}; 
\node[below of=1](1') {$N(A)$}; 
\node[below of=2](2') {$N(O)\times N(O)$}; 

\draw[->] (1) to (2); 
\draw[->] (1) to (1'); 
\draw[->] (2) to node[right,la]{$\rho_O\times\rho_O$} (2'); 
\draw[->] (1') to node[below,la]{$(s,t)$}  (2');

\node[above left of=1,xshift=-.5cm](5) {$M(A)$};

\draw[->,bend left] (5) to node[above,la]{$(s,t)$} (2); 
\draw[->,bend right] (5) to node[below,la, xshift=-2pt]{$\rho_A$} (1'); 
\draw[->>,dashed] (5) to (1);
\node at ($(1)-(-.3cm,.3cm)$) {$\lrcorner$};
\end{tz}
codifies fullness.

For $I$, the fiberwise surjectivity condition becomes
\begin{tz}
\node[](1) {$\bullet$}; 
\node[right of=1,xshift=1.5cm](2) {$\mathrm{eq}(M(A)\rightrightarrows M(O))$}; 
\node[below of=1](1') {$N(I)$}; 
\node[below of=2](2') {$\mathrm{eq}(N(A)\rightrightarrows N(O))$}; 

\draw[->] (1) to (2); 
\draw[->] (1) to (1'); 
\draw[->] (2) to (2'); 
\draw[->] (1') to  (2');

\node[above left of=1,xshift=-.5cm](5) {$M(I)$};

\draw[->,bend left] (5) to  (2); 
\draw[->,bend right] (5) to node[below,la, xshift=-2pt]{$\rho_I$} (1'); 
\draw[->>,dashed] (5) to (1);
\node at ($(1)-(-.3cm,.3cm)$) {$\lrcorner$};
\end{tz}
which codifies reflection of identities locally, this is, only for endomorphisms.

For $E$, this condition is 
\begin{tz}
\node[](1) {$\bullet$}; 
\node[right of=1,xshift=1.5cm](2) {$\{x\rightrightarrows y\}$}; 
\node[below of=1](1') {$N(E)$}; 
\node[below of=2](2') {$\{Fx\rightrightarrows Fy\}$}; 

\draw[->] (1) to (2); 
\draw[->] (1) to (1'); 
\draw[->] (2) to (2'); 
\draw[->] (1') to  (2');

\node[above left of=1,xshift=-.5cm](5) {$M(E)$};

\draw[->,bend left] (5) to  (2); 
\draw[->,bend right] (5) to node[below,la, xshift=-2pt]{$\rho_E$} (1'); 
\draw[->>,dashed] (5) to (1);
\node at ($(1)-(-.3cm,.3cm)$) {$\lrcorner$};
\end{tz}
which encodes faithfulness.

For $T$, this condition is 
\begin{tz}
\node[](1) {$\bullet$}; 
\node[right of=1,xshift=1.5cm](2) {$\{\Delta\}$}; 
\node[below of=1](1') {$N(T)$}; 
\node[below of=2](2') {$\{F\Delta\}$}; 

\draw[->] (1) to (2); 
\draw[->] (1) to (1'); 
\draw[->] (2) to (2'); 
\draw[->] (1') to  (2');

\node[above left of=1,xshift=-.5cm](5) {$M(T)$};

\draw[->,bend left] (5) to  (2); 
\draw[->,bend right] (5) to node[below,la, xshift=-2pt]{$\rho_T$} (1'); 
\draw[->>,dashed] (5) to (1);
\node at ($(1)-(-.3cm,.3cm)$) {$\lrcorner$};
\end{tz}
where $\{\Delta\}$ is the set of not necessarily commutative triangles, and $M(T)$ is the set of commutative triangles, and analogously for $N$. This condition encodes, knowing that there is such a triangle, reflection of commutativity.
\end{ex}

\begin{defn}Let $\cL$ be a $\folds$ signature, let $M,N\colon\cL\to\Set$ be two $\cL$-structures, and $\Gamma\colon \cL\to \Set$ a context together with given interpretations $\alpha\colon\Gamma\to M$ and $\beta\colon\Gamma\to N$. The $\cL$-structures $M$ and $N$ are $\cL$-equivalent in context $\Gamma$ if there is a diagram of fiberwise surjections $\sigma$ and $\rho$ of $\cL$-structures under the context $\Gamma$

\begin{tz}
\node[](1) {$\Gamma$};
\node[below of=1,xshift=-1.5cm](2) {$M$};
\node[below of=1,xshift=1.5cm](3) {$N$};
\node[below of=3,xshift=-1.5cm](4) {$P$};

\draw[->] (1) to node[left,la]{$\alpha$} (2);
\draw[->] (1) to node[right,la]{$\beta$} (3);
\draw[->] (1) to node[left,la]{$\gamma$} (4);
\draw[->] (4) to node[left,la]{$\sigma$} (2);
\draw[->] (4) to node[right,la]{$\rho$} (3);
\end{tz}
In this case we write $M\simeq_{\cL}^{\Gamma} N$.
\end{defn} 

\begin{theorem}\label{thm:same_formulae}
If $M$ and $N$ are $\cL$-equivalent in a context $\Gamma$
\begin{tz}
\node[](1) {$\Gamma$};
\node[below of=1,xshift=-1.5cm](2) {$M$};
\node[below of=1,xshift=1.5cm](3) {$N$};
\node[below of=3,xshift=-1.5cm](4) {$P$};

\draw[->] (1) to node[left,la]{$\alpha$} (2);
\draw[->] (1) to node[right,la]{$\beta$} (3);
\draw[->] (1) to node[left,la]{$\gamma$} (4);
\draw[->] (4) to node[left,la]{$\sigma$} (2);
\draw[->] (4) to node[right,la]{$\rho$} (3);
\end{tz}
then for all formulae $\phi$ with $\mathrm{var}(\phi)\subset\Gamma$, $M\vDash \phi[\alpha]$ if and only if $N\vDash\phi[\beta]$.
\end{theorem}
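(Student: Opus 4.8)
The plan is to first reduce the statement to the case of a single fiberwise surjection, and then argue by induction on the structure of the formula. Since $M$ and $N$ are $\cL$-equivalent in context $\Gamma$ via a span $M\xleftarrow{\sigma}P\xrightarrow{\rho}N$ of fiberwise surjections with $\alpha=\sigma\gamma$ and $\beta=\rho\gamma$, it suffices to prove the following lemma: for a fiberwise surjection $\rho\colon S\to S'$ of $\cL$-structures, every context $\Delta$, every interpretation $\delta\colon\Delta\to S$, and every formula $\psi$ with $\var(\psi)\subseteq\Delta$, one has $S\vDash\psi[\delta]$ if and only if $S'\vDash\psi[\rho\delta]$. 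Applying this lemma to $\sigma$ and to $\rho$ and chaining the two equivalences yields $M\vDash\phi[\alpha]\iff P\vDash\phi[\gamma]\iff N\vDash\phi[\beta]$. Note that the lemma must be stated for \emph{all} contexts and interpretations, not just the fixed $\Gamma$, because the quantifier case enlarges the context by the bound variable.

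For the base case, let $\psi=\dot R\langle x_p\rangle$ be atomic. Writing $\partial^{\dot R}S$ for the matching object and $m^{\dot R}\colon S\dot R\rightarrowtail\partial^{\dot R}S$ for the matching map---which is a monomorphism by the defining condition of an $\cL$-structure (\cref{def:Lstructure})---fiberwise surjectivity of $\rho$ at $\dot R$ (\cref{def:fiberwise_surj}) says the comparison map $S\dot R\to S'\dot R\times_{\partial^{\dot R}S'}\partial^{\dot R}S$ is an epimorphism. Since $m^{\dot R}_{S'}$ is mono, the projection of this pullback to $\partial^{\dot R}S$ is mono; as the composite $S\dot R\to\partial^{\dot R}S$ is the mono $m^{\dot R}_S$, the epimorphism must also be mono and hence (in $\Set$) an isomorphism. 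Thus $S\dot R$ is identified with the set of tuples $t\in\partial^{\dot R}S$ whose image $\partial^{\dot R}\rho(t)$ lies in $S'\dot R$. Taking $t=\langle\delta x_p\rangle$ and using naturality to identify $\partial^{\dot R}\rho(\langle\delta x_p\rangle)=\langle\rho\delta x_p\rangle$, this says exactly that $S\vDash\dot R[\delta]$ if and only if $S'\vDash\dot R[\rho\delta]$.

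The propositional connectives $\top,\bot,\wedge,\vee,\to$ are immediate from the inductive hypothesis, since the variables of a compound formula are the union of the variables of its parts and hence still contained in $\Delta$. The heart of the argument is the quantifier case; I treat $\exists x\,\phi$ with $x\colon K\langle x_p\rangle$, the case of $\forall x\,\phi$ being dual. The variables $x_p$ on which $x$ depends lie in $\var(\exists x\,\phi)\subseteq\Delta$, so $\delta$ determines $\langle\delta x_p\rangle\in\partial^K S$, and witnesses for $x$ are elements of the fiber of $m^K$ over this tuple. For the direction $S\vDash\exists x\,\phi[\delta]\Rightarrow S'\vDash\exists x\,\phi[\rho\delta]$, push a witness $a\in SK$ forward to $\rho_K(a)\in S'K$, which lands in the correct fiber by naturality, and apply the inductive hypothesis to $\phi$ in the enlarged context $\Delta\cup\{x\}$. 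For the converse, given a witness $b\in S'K$ in the fiber over $\langle\rho\delta x_p\rangle$, the pair $(b,\langle\delta x_p\rangle)$ defines an element of the matching pullback $S'K\times_{\partial^K S'}\partial^K S$, and fiberwise surjectivity at $K$ produces a lift $a\in SK$ with $\rho_K(a)=b$ sitting in the fiber over $\langle\delta x_p\rangle$; the inductive hypothesis then transports $S'\vDash\phi(b/x)$ back to $S\vDash\phi(a/x)$.

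The main obstacle is precisely this existential case (equivalently, the surjectivity direction of the universal): one must produce a witness over $S$ lying in the \emph{correct} fiber---that is, compatible with the already-chosen interpretations of the lower-degree variables $x_p$---from a witness over $S'$. This compatibility is exactly what the matching-square pullback in \cref{def:fiberwise_surj} encodes, and it is the reason the definition of fiberwise surjection is phrased via the comparison map to that pullback rather than mere levelwise surjectivity; the dependency structure of the signature is what makes naive surjectivity insufficient. The remaining verifications---that the pushed-forward or lifted witness really lands in the stated fiber, and the bookkeeping of extending interpretations along the inclusion $\Delta\hookrightarrow\Delta\cup\{x\}$---are routine naturality checks.
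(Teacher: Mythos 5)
Your proof is correct and follows exactly the approach the paper takes, namely induction on the structure of the formula after reducing to a single fiberwise surjection; the paper itself only sketches this by deferring to \cite[Theorem 11.2.28]{elements}, and your write-up supplies the details of that cited argument (the mono/epi interplay at relation symbols for the atomic case, and the matching-square pullback for lifting quantifier witnesses into the correct fiber). No gaps.
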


\begin{proof}
    This proof is done by induction. See, for example, \cite[Theorem 11.2.28]{elements}.
\end{proof}

\section{Homotopy theory meets FOLDS}\label{sec:homotopy_meets_folds}

In forthcoming work \cite{HenryBardomiano}, summarized in \cite{HenryFolds}, Henry and Bardomiano Mart\'inez show that the language induced by a $\folds$ signature category can be meaningfully related with certain model structures. Following their ideas, we prove that if the $\folds$ signature is connected to a model structure as in condition $\bigtriangleup$, then a weak equivalence between fibrant objects preserves the truth of any mathematical statement that is expressible in the FOLDS language. 

As shown in \cref{sec:folds}, the formal language is completely determined by a $\folds$ signature category, the model structure enters into play when we interpret that language in $\cat$, $\twocat$, and $\dblcat$ depending on the case.

In this section we want to make sense of and complete the table below.

\begin{table}[ht]
    \centering
    \begin{tabularx}{0.98\textwidth}{c|c|c|c}
    Model structure & weak equiv. & fibrant objects & formal language \\ \hline
    
    Folk MS on $\cat$ & equivalences & all categories &  \begin{tabular}{@{}c@{}}equivalence invariant \\ $1$-category theory\end{tabular} \\
    \hline
    
    Lack's MS on $\twocat$ & biequivalences & all $2$-categories &  \begin{tabular}{@{}c@{}}biequivalence invariant \\ $2$-category theory\end{tabular}  \\ 
    \hline
    
     \begin{tabular}{@{}c@{}} MS on $\dblcat$ \\ (\cref{MS_equipments})\end{tabular} &  \begin{tabular}{@{}c@{}} double \\ biequivalences \end{tabular}& equipments & \begin{tabular}{@{}c@{}c@{}c@{}c@{}}double biequiv.\ invariant\\ ``equipment theory''\\ $\cup$\\ model independent \\ (some) $\infty$-category theory\footnotemark\end{tabular}\\ 
    \hline 
    \end{tabularx}
    \vspace{.1cm}
    \label{tab:MS_and_formal_languages}
\end{table}
\footnotetext{In future work Verity and Riehl will show that the virtual equipments of modules associated to an $\infty$-cosmos of $(\infty,1)$-categories is actually a (pseudo)equipment.}

\subsection{The nerve-realization construction}\label{nerve_construction}
The situation in \cref{Kan_for_Dcat} generalizes to $\twocat$ and $\dblcat$, and placing these elements into a more general construction will simplify some future computations, so let us look at the following setting. 

Let $\cE$ be a cocomplete category and $\cC$ a small category, together with a functor $D\colon\cC\to\cE$. Considering the Yoneda embedding we can assemble these functors into the span below-left.

\begin{tz}
\node[](1)  {$\cC$};
\node[below of=1,xshift=-1.5cm,yshift=-0.60cm](2) {$\cE$};
\node[below of=1,xshift=1.5cm,yshift=-0.60cm](3) {$\Set^{\C^\op}$};

\draw[->] (1) to node[left,la]{$D$} (2);
\draw[->] (1) to node[right,la]{$\yo$} (3);

\node[right of=1,xshift=3cm](1)  {$\cC$};
\node[below of=1,xshift=-1.5cm,yshift=-0.60cm](2) {$\cE$};
\node[below of=1,xshift=1.5cm,yshift=-0.60cm](3) {$\Set^{\C^\op}$};

\draw[->] (1) to node[left,la]{$D$} (2);
\draw[->] (1) to node[right,la]{$\yo$} (3);

\draw[->, bend right=20] ($(2.east)-(-4pt,5pt)$) to node[below,la]{$N$} ($(3.west)-(0,5pt)$);
\draw[->, bend right=20] ($(3.west)+(0,5pt)$) to node[above,la]{$\Lan_\yo D$} ($(2.east)+(4pt,5pt)$);

\node[la] at ($(2.east)!0.5!(3.west)$) {$\bot$};

\end{tz}

Since $\cE$ is cocomplete, we know that the left Kan extension of the functor $D$ along the Yoneda embedding $\yo$ exists, let us denote it by $\Lan_\yo D$. Furthermore, when $\cE$ is locally small it accepts a right adjoint given by $N\coloneqq\hom(D-,-)$ that sends an object in $E\in\cE$ to $\hom(D-,E)$. In summary, the diagram above-left gives rise to a diagram as above-right, with $N\coloneqq\hom_\cE(D-,-)$.

\begin{rem} For future computations, it is worth highlighting that $\Lan_\yo D$ is the weighted colimit $\Lan_\yo D=-\ast_\cC D$.
\end{rem}

As we did for the case of $\cat$, in this paper we will use this setting when the category $\cC$ is the opposite of a $\folds$ signature category $\cL$ for a mathematical structure, and $\cE$ a category of models $\cM$ for such mathematical structure\textemdash for example, the category $\cat$ for $\cL_\cat$. As in \cref{def:Dcat}, suppose also that we have a functor $D\colon\cL^\op\to\cM$. Then the construction above becomes

\begin{equation}\label{nerve-construction-for-folds}
\end{equation}
\begin{tz}
\node[](1)  {$\cL^\op$};
\node[below of=1,xshift=-1.5cm,yshift=-0.60cm](2) {$\cM$};
\node[below of=1,xshift=1.5cm,yshift=-0.60cm](3) {$\Set^{\cL}$};

\draw[->] (1) to node[left,la]{$D$} (2);
\draw[->] (1) to node[right,la]{$\yo$} (3);

\draw[->, bend right=20] ($(2.east)-(-4pt,5pt)$) to node[below,la]{$\hom(D-,-)$} ($(3.west)-(0,5pt)$);
\draw[->, bend right=20] ($(3.west)+(0,5pt)$) to node[above,la]{$-\ast_\cC D$} ($(2.east)+(4pt,5pt)$);
\node[la] at ($(2.east)!0.5!(3.west)$) {$\bot$};
\end{tz}

\subsection{A canonical weak factorization system in $\Set^\cL$}\label{subsec:mono_epi}

For any $\folds$ signature category $\cL$, the canonical weak factorization system $(\mono,\epi)$ on $\Set$ whose left class consists of the monomorphisms and whose right class consists of the epimorphisms, induces a weak factorization system $(\mono[\cL],\epi[\cL])$ on $\Set^\cL$ as in \cref{thm:Reedy_wfs}. 

Moreover, since the weak factorization system $(\mono,\epi)$ in $\Set$ is cofibrantly generated by the unique map $!\colon\emptyset\to\ast$ we know by \cref{thm:Reedy_wfs} that the weak factorization system $(\mono[\cL],\epi[\cL])$ is cofibrantly generated by $\cB\, \widehat{\ast}\, (\{!\colon\emptyset\to\ast\})$, where in this case $\cB$ is the set of boundary incusions $\cB=\{\partial\cL_K\to\cL_K\colon \text{for all } K\in\cL\}$. Now, the map $!\colon\emptyset\to\ast$ is the unit of the tensor $-\widehat{\ast}-$ and thus the weak factorization system $(\mono[\cL],\epi[\cL])$ is simply generated by $\cB$.

\begin{lem}\label{lem:Reedy_mono_epi} Let $\cL$ be a $\folds$ signature category, and let us consider the weak factorization $(\mono,\epi)$ on $\Set$. The Reedy weak factorization $(\mono[\cL],\epi[\cL])$ on $\Set^\cL$ has as left class monomorphisms, and as right class fiberwise surjective natural transformations. 
\end{lem}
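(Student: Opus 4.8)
The plan is to apply \cref{thm:Reedy_wfs} directly with $\cC=\cL$ and $\mathcal{D}=\Set$ equipped with the weak factorization system $(\mono,\epi)$, and then to identify the two resulting classes explicitly by exploiting the defining feature of a $\folds$ signature: it is an \emph{inverse} category, so every non-identity morphism strictly lowers degree. Concretely, a map $\alpha\colon X\to Y$ in $\Set^\cL$ lies in $\mono[\cL]$ iff every relative latching map $\widehat{\ell}^K\alpha$ is a mono, and in $\epi[\cL]$ iff every relative matching map $\widehat{m}^K\alpha$ is a surjection; I would show that these recover exactly the (componentwise, hence global) monomorphisms and the fiberwise surjections respectively.

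First I would compute the latching objects. Fix a kind $K$ of degree $n$. The boundary $\partial\cL^K=\mathrm{sk}_{n-1}\cL^K$ is the subfunctor of $\cL(-,K)$ consisting of those arrows $L\to K$ factoring through an object of degree $\leq n-1$. Since $\cL$ is inverse, any arrow $L'\to K$ forces $\deg L'\geq \deg K=n$; hence no arrow into $K$ can factor through an object of degree $\leq n-1$, so $\partial\cL^K$ is the empty (initial) weight. By cocontinuity of $-\ast_\cL X$ in the weight, $L^K X=\partial\cL^K\ast_\cL X=\emptyset$. The pushout defining the relative latching map therefore degenerates, and $\widehat{\ell}^K\alpha$ is simply the component $\alpha_K\colon X(K)\to Y(K)$. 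Thus $\alpha\in\mono[\cL]$ iff each $\alpha_K$ is injective, which is exactly the condition for $\alpha$ to be a monomorphism in $\Set^\cL$, as monomorphisms in a $\Set$-valued functor category are detected componentwise.

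For the right class, I would observe that the matching object $M^K X=\{\partial\cL_K,X\}^\cL$ used in \cref{thm:Reedy_wfs} agrees with the $\folds$ matching object appearing in \cref{def:fiberwise_surj}: because $\cL$ is inverse, the boundary $\partial\cL_K$ is the subfunctor of $\cL(K,-)$ spanned by the non-identity arrows out of $K$, and the corresponding weighted limit is precisely the limit over the non-identity maps out of $K$ that computes the matching object. Consequently the relative matching map $\widehat{m}^K\alpha\colon X(K)\to Y(K)\times_{M^K Y}M^K X$ is exactly the comparison map into the pullback of the matching square displayed in \cref{def:fiberwise_surj}. Therefore $\alpha\in\epi[\cL]$ iff every $\widehat{m}^K\alpha$ is surjective, which is by definition the assertion that $\alpha$ is fiberwise surjective.

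The only genuine content beyond bookkeeping is the vanishing of the latching objects, which is where the inverse-category hypothesis enters decisively; once that is established, the left class collapses to the pointwise monomorphisms and the right-class description is a direct rephrasing of \cref{def:fiberwise_surj}. I expect the main obstacle to be purely expository, namely carefully matching the Reedy-theoretic matching object $\{\partial\cL_K,X\}^\cL$ with the concrete fiberwise-surjectivity square, i.e.\ verifying that the two notions of matching map genuinely coincide rather than merely resemble one another.
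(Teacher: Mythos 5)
Your proposal is correct and follows essentially the same route as the paper: the key point in both is that the inverse-category structure of $\cL$ forces every latching object to be empty (no arrow into a kind $K$ can factor through an object of strictly lower degree), so the relative latching maps reduce to the components $\alpha_K$ and the left class is the componentwise, hence ordinary, monomorphisms, while the right class is definitionally the fiberwise surjections of \cref{def:fiberwise_surj}. Your writeup merely spells out in more detail the identification of the matching objects, which the paper dispatches as immediate from the definitions.
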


\begin{proof}
That the right class consists of the fiberwise surjective natural transformations is clear from the definitions of Reedy epimorphism and fiberwise surjection. To show that in this context Reedy monomorphisms are exactly monomorphisms, we recall that a map $\alpha\colon X\to Y$ in $\Set^\cL$ is a Reedy monomorphism if and only if for every $K$ in $\cL$ the map $$\widehat{\alpha}_K\colon X_K\bigsqcup_{L_KX}L_K Y\to Y_K$$ is a monomorphism too. Now, since $\cL$ is an inverse category, $L_K=\emptyset$ for every $K$ in $\cL$. Consequently, the condition of $\widehat{\alpha}_K$ being a monomorphism reduces to $\widehat{\alpha}_K\colon X_K\to Y_K$ being a monomorphism.
\end{proof}

\begin{cor}\label{rem:Yoneda_Reedy_cofibrant}
The Yoneda embedding $\yo\colon \cL^\op\to \Set^\cL$ is Reedy monomorphic.
\end{cor}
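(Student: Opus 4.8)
The plan is to exhibit $\yo$ as a Reedy-cofibrant diagram for the monomorphism weak factorization system by computing its latching maps and checking that each is a monomorphism; \cref{lem:Reedy_mono_epi} then places these maps in the left class $\mono[\cL]$, which is precisely what \emph{Reedy monomorphic} asserts.

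First I would observe that the indexing category $\cL^\op$ carries a Reedy (indeed direct) structure. Since $\cL$ is inverse, its degree functor $\deg\colon\cL\to\omega^\op$ makes every non-identity morphism strictly lower the degree, so in $\cL^\op$ every non-identity morphism strictly raises it; taking the degree-raising subcategory to be all of $\cL^\op$ and the degree-lowering one to consist of the identities exhibits $\cL^\op$ as a Reedy category. Feeding this Reedy category, together with the weak factorization system $(\mono[\cL],\epi[\cL])$ on $\Set^\cL$ from \cref{lem:Reedy_mono_epi}, into \cref{thm:Reedy_wfs} yields a Reedy weak factorization system on $(\Set^\cL)^{\cL^\op}$, relative to which ``Reedy monomorphic'' means that the latching maps of $\yo$ land in the left class.

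The heart of the argument is a density computation. Viewing $\yo\colon\cL^\op\to\Set^{(\cL^\op)^\op}=\Set^\cL$, I would argue that the weighted-colimit functor $(-)\ast_{\cL^\op}\yo\colon\Set^\cL\to\Set^\cL$ is naturally isomorphic to the identity: by \cref{weighted_colimit} it sends a representable weight to the value of $\yo$ at the representing object and is cocontinuous in the weight, and since every object of $\Set^\cL$ is a colimit of representables, the canonical comparison with the identity functor is an isomorphism (this is the co-Yoneda lemma). Applying this isomorphism to the boundary inclusion of weights $\partial(\cL^\op)^K\hookrightarrow(\cL^\op)^K$ identifies the latching map $\ell^K\colon L^K\yo=\partial(\cL^\op)^K\ast_{\cL^\op}\yo\to\yo(K)$ with that very boundary inclusion; note that $(\cL^\op)^K$ is exactly the covariant representable $\cL_K$. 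Boundary inclusions are subfunctor inclusions and hence monomorphisms in $\Set^\cL$, so by \cref{lem:Reedy_mono_epi} they belong to $\mono[\cL]$, and $\yo$ is Reedy monomorphic.

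The main obstacle I anticipate is purely one of bookkeeping the variances, and in particular resisting the temptation to compute the latching object ``by hand''. A direct computation only shows that the image of the latching map is the boundary subfunctor, not that the map itself is the subobject inclusion; it is the density isomorphism $(-)\ast_{\cL^\op}\yo\cong\mathrm{id}$ that upgrades this to genuine monomorphicity, by transporting the evidently monic inclusion of weights. It is worth noting that the inverse structure of $\cL$ is used only to make $\cL^\op$ Reedy and to invoke \cref{lem:Reedy_mono_epi}; the monomorphicity of the latching maps of a Yoneda embedding is otherwise a general fact about Reedy categories.
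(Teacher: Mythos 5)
Your proof is correct and follows essentially the same route as the paper: the paper's one-line argument is precisely that the latching maps of $\yo$ are the boundary inclusions $\partial\cL_K\to\cL_K$, i.e.\ the generating monomorphisms $\cB$, and your co-Yoneda/density computation is the standard justification of that identification. The extra details you supply (the direct Reedy structure on $\cL^\op$ and the isomorphism $(-)\ast_{\cL^\op}\yo\cong\mathrm{id}$) are exactly what the paper leaves implicit.
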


\begin{proof}
In fact, the latching maps of this functor are exactly the generating monomorphisms $\cB=\{\partial\cL_K\to\cL_K\colon \text{for all } K\in\cL\}$.
\end{proof}

\begin{rem}\label{latching_map_realization}
Consider now the construction in \cref{nerve_construction}, and let's call the left adjoint $|-|=\Lan_\yo D$\textemdash which we will mostly use in the form of $|-|=-\ast_{\cL^\op}D$. If we right lift the weak factorization system $(\mono[\cL], \epi[\cL])$ through the right adjoint $N=\hom(D-,-)$, we obtain a weak factorization system in $\cM$ cofibrantly generated by 
\begin{tz}
\node[](1)   {$|\partial\cL_K|$};
\node[below of=1](2)    {$|\cL_K|$};

\node[right of=1](1')     {$L_K D$};
\node[right of=2](2')     {$D(K)$};

\draw[->] (1) to (2);
\draw[->] (1') to (2');

\node[la] at ($(1)!0.5!(1')$) {$\cong$};
\node[la] at ($(2)!0.5!(2')$) {$\cong$};
\end{tz}
for all $K$ in $\cL$.
\end{rem}

\subsection{Interaction with homotopy theory}

In what follows we prove an abstraction of \cite[Lemma 11.2.24]{elements}; the main idea appears in \cite{HenryFolds}. For this, we focus on a scenario where the nerve-realization construction of \cref{nerve_construction} is homotopically well-behaved with respect to the weak factorization system on $\Set^\cL$ presented in \cref{subsec:mono_epi} and some model structure on the category $\cM$. We describe this precisely below, and we call it Condition $\bigtriangleup$ going forward. 

\subsection*{Condition $\bigtriangleup$} Let $\cL$ be a $\folds$ signature category, and let us consider a diagram $D\colon\cL^\op\to\cM$ where $\cM$ is a model category such that in the diagram below (see \cref{nerve_construction})
\begin{tz}
\node[](1)  {$\cL^\op$};
\node[below of=1,xshift=-1.5cm,yshift=-0.60cm](2) {$\cM$};
\node[below of=1,xshift=1.5cm,yshift=-0.60cm](3) {$\Set^{\cL}$};

\draw[->] (1) to node[left,la]{$D$} (2);
\draw[->] (1) to node[right,la]{$\yo$} (3);

\draw[->, bend right=20] ($(2.east)-(-4pt,5pt)$) to node[below,la]{$N$} ($(3.west)-(0,5pt)$);
\draw[->, bend right=20] ($(3.west)+(0,5pt)$) to node[above,la]{$-\ast_\cC D$} ($(2.east)+(4pt,5pt)$);

\node[la] at ($(2.east)!0.5!(3.west)$) {$\bot$};
\end{tz}
 the weak factorization system $(\Cof, \trfib)$ on $\cM$ is right-lifted from the Reedy weak factorization system $(\mono[\cL],\epi[\cL])$ on $\Set^\cL$ of \cref{lem:Reedy_mono_epi}. 

A key component in the proof of our desired result is Ken Brown's factorization lemma \cite{KenBrown_factorization}, see also \cite[C.1.6]{elements}, which we recall below.

\begin{lem}[Ken Brown's factorization]\label{brown_factorization} Let $\cM$ be a model category. Any map $f\colon X\to Y$ between fibrant objects factorizes as a weak equivalence followed by a fibration as shown below.

\begin{tz}
  \node[](1) {$P_f$};
  \node[above of=1,xshift=-1.3cm](2) {$X$};
  \node[above of=1,xshift=1.3cm](3) {$Y$};

  \draw[->] (2) to node[above,la]{$f$} (3);
  \draw[->] (2) to node[left,la]{$w$} node[right,la]{$\sim$} (1);
  \draw[->>] (1) to node[right,la]{$p$} (3);
\end{tz}
Moreover, the weak equivalence $w$ is constructed as the section of a trivial fibration $q\colon P_f\fwto X$.
\end{lem}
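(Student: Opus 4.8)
The plan is to build the factorization out of a path object for the codomain. Since $\cM$ is a model category, the factorization axiom applied to the diagonal of $Y$ produces
$$ Y \xrightarrow{s} \Path Y \xrightarrow{(\partial_0,\partial_1)} Y\times Y $$
with $s$ a weak equivalence and $(\partial_0,\partial_1)$ a fibration. First I would check that both $\partial_0$ and $\partial_1$ are \emph{trivial} fibrations: because $Y$ is fibrant the projection $Y\times Y\to Y$ is a fibration, so each $\partial_i=\pi_i\circ(\partial_0,\partial_1)$ is a fibration, and since $\partial_i\circ s=\id_Y$, the 2-out-of-3 property forces $\partial_i$ to be a weak equivalence.

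Next I would define $P_f$ as the pullback of $\partial_0$ along $f$, with left projection $q\colon P_f\to X$ and top projection $r\colon P_f\to\Path Y$ satisfying $f\circ q=\partial_0\circ r$. Being a pullback of the trivial fibration $\partial_0$, the map $q$ is itself a trivial fibration $q\colon P_f\fwto X$. The section $w\colon X\to P_f$ is then obtained from the universal property of the pullback applied to the cone $(\id_X,\, s\circ f)$, which is compatible since $\partial_0\circ s\circ f=f$; by construction $q\circ w=\id_X$, so $w$ is a section of the weak equivalence $q$ and hence is itself a weak equivalence by 2-out-of-3. This is exactly the construction of $w$ as the section of $q\colon P_f\fwto X$ promised in the statement.

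It remains to produce the fibration, and I would set $p\coloneqq\partial_1\circ r\colon P_f\to Y$. One immediately checks $p\circ w=\partial_1\circ s\circ f=f$, so $f=p\circ w$. The step I expect to be the main obstacle is showing that $p$ is a fibration. For this I would observe that the combined map $(q,p)\colon P_f\to X\times Y$ is precisely the pullback of the fibration $(\partial_0,\partial_1)\colon\Path Y\to Y\times Y$ along $f\times\id_Y\colon X\times Y\to Y\times Y$ (a point of that pullback is a triple $(x,y,\gamma)$ with $f(x)=\partial_0\gamma$ and $y=\partial_1\gamma$, so $y$ is redundant and the pullback is canonically $P_f$, with structure map $(x,\gamma)\mapsto(x,\partial_1\gamma)$). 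Hence $(q,p)$ is a fibration, and since $X$ is fibrant the projection $\pi_Y\colon X\times Y\to Y$ is a fibration, so $p=\pi_Y\circ(q,p)$ is a composite of fibrations. This yields the desired factorization $X\xrightarrow{w}P_f\xrightarrow{p}Y$ with $w$ a weak equivalence and $p$ a fibration.
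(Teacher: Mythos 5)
Your proposal is correct and is exactly the standard mapping path space construction that the paper's proof defers to (it simply cites \cite[C.1.6]{elements} and \cite[Lemma 1.1.12]{Hovey}): factor the diagonal of $Y$ through a path object, pull back $\partial_0$ along $f$ to get the trivial fibration $q$ with section $w$, and exhibit $p$ as a composite of pullbacks of fibrations using the fibrancy of $X$ and $Y$. All the steps, including the identification of $(q,p)$ as a pullback of $(\partial_0,\partial_1)$, are sound.
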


\begin{proof}
    See the proof in \cite[C.1.6]{elements}, and also \cite[Lemma 1.1.12]{Hovey}.
\end{proof}

Whenever we are in the situation of diagram \eqref{nerve-construction-for-folds}, when we input the diagram $D\colon\cL^\op\to\cM$ to the hom bifunctor 
\begin{tz}
\node[](1) {$(\cM^{\cL^\op})^\op\times\cM$};
\node[right of=1,xshift=3.5cm](2)         {$\Set^{\cL}$};

\draw[->] (1) to node[above,la]{$\mathrm{hom}$} (2);
\end{tz}
the resulting functor $M\_\coloneqq \mathrm{hom}(D-,-)$ gives us a map between $\cL$-structures.

\begin{prop}\label{thm:weak_equiv_to_Lequiv}
Let $\cL$ be a $\folds$ signature category, and consider a diagram $D\colon\cL^\op\to\cM$ together with a model structure on $\cM$ satisfying Condition $\bigtriangleup$. 
Then 
\begin{enumerate}
    \item weakly equivalent fibrant objects induce $\cL$-equivalent structures,
    \item if a weak equivalence between fibrant objects preserves a context, then the $\cL$-structures are equivalent in that context.
\end{enumerate}
\end{prop}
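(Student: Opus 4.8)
The plan is to build the required $\cL$-equivalence directly out of Ken Brown's factorization of the weak equivalence, using the dictionary that Condition $\bigtriangleup$ sets up. The crucial translation is this: by \cref{lem:Reedy_mono_epi} the fiberwise surjective natural transformations are exactly the right class $\epi[\cL]$ of the Reedy weak factorization system on $\Set^\cL$, and Condition $\bigtriangleup$ asserts that $(\Cof,\trfib)$ on $\cM$ is right-lifted from $(\mono[\cL],\epi[\cL])$ through the nerve $N=\hom(D-,-)=M\_$. Hence a map $g$ in $\cM$ is a trivial fibration if and only if $M_g$ is fiberwise surjective; in particular $M\_$ carries trivial fibrations to fiberwise surjections. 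Since an $\cL$-equivalence is by definition a span of fiberwise surjections, it suffices to produce a span of trivial fibrations in $\cM$ over $X$ and $Y$ and apply $M\_$.

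For (1), I would take a weak equivalence $f\colon X\to Y$ between fibrant objects and apply \cref{brown_factorization} to factor it as $X\xrightarrow{w}P_f\xrightarrow{p}Y$ with $f=p\circ w$, where $p$ is a fibration and $w$ is the section of a trivial fibration $q\colon P_f\fwto X$, so that $q\circ w=\id_X$. Since $q\in\W$ and $\id_X\in\W$, the 2-out-of-3 property forces $w\in\W$; then from $f=p\circ w$ with $f,w\in\W$ we get $p\in\W$, so $p$ is also a trivial fibration. Applying $M\_$ then yields a span of fiberwise surjections
\[ M_X \xleftarrow{M_q} M_{P_f} \xrightarrow{M_p} M_Y, \]
which is precisely the data exhibiting $M_X$ and $M_Y$ as $\cL$-equivalent.

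For (2), I would keep the same span and check that it can be made to lie under a prescribed context. Suppose $\Gamma$ is a context with interpretation $\alpha\colon\Gamma\to M_X$; that $f$ preserves this context means the induced interpretation in $M_Y$ is $\beta\coloneqq M_f\circ\alpha$. The section $w$ from Ken Brown's lemma is exactly what supplies the compatibility: setting $\gamma\coloneqq M_w\circ\alpha\colon\Gamma\to M_{P_f}$, functoriality of $M\_$ gives $M_q\circ\gamma=M_{q\circ w}\circ\alpha=M_{\id_X}\circ\alpha=\alpha$ and $M_p\circ\gamma=M_{p\circ w}\circ\alpha=M_f\circ\alpha=\beta$. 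Thus the span lies under $\Gamma$, witnessing $M_X\simeq_\cL^\Gamma M_Y$.

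The verifications that $p$ is a weak equivalence and that $M\_$ respects the relevant composites and identities are routine. The single load-bearing input is the explicit \emph{section} $w$ produced by \cref{brown_factorization}: it is what upgrades the bare existence of a span in (1) to a span compatible with a prescribed interpretation in (2). The main point to get right is the bookkeeping that, via Condition $\bigtriangleup$ together with \cref{lem:Reedy_mono_epi}, identifies ``trivial fibration in $\cM$'' with ``fiberwise surjection after applying $M\_$'', since this is the bridge carrying the homotopical statement across to the $\folds$ notion of $\cL$-equivalence.
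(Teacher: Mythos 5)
Your proposal is correct and follows essentially the same route as the paper: Ken Brown's factorization plus 2-out-of-3 to obtain a span of trivial fibrations $X \xleftarrow{q} P_f \xrightarrow{p} Y$, Condition $\bigtriangleup$ to translate these into fiberwise surjections of $\cL$-structures, and the section $w$ to define $\gamma = M_w\circ\alpha$ witnessing compatibility with the context. Your explicit verification that $M_q\circ\gamma=\alpha$ and $M_p\circ\gamma=\beta$ spells out a step the paper leaves implicit, but the argument is the same.
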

\begin{proof}
It suffices to consider $f\colon X\to Y$ a weak equivalence between fibrant objects in $\cM$. By Ken Brown's factorization and the 2-out-if-3 property for weak equivalences, the map $f\colon X\to Y$ admits a factorization

\begin{tz}
  \node[](1) {$P_f$};
  \node[above of=1,xshift=-1.3cm](2) {$X$};
  \node[above of=1,xshift=1.3cm](3) {$Y$};

  \draw[->] (2) to node[above,la]{$f$} node[below,la]{$\sim$} (3);
  \draw[->>] (1) to node[left,la]{$q$} node[right,la]{$\sim$} (2);
  \draw[->>] (1) to node[right,la]{$p$} node[left,la]{$\sim$} (3);
  \draw[dashed,->,bend right=45] (2) to node[left,la]{$w$} (1);
\end{tz}
where $p$ is a trivial fibration and $w$ is a section of a trivial fibration.

We know that the class of $\trfib$ of trivial fibrations in $\cM$ are right lifted from the Reedy weak factorization system $(\mono[\cL],\epi[\cL])$ on $\Set^\cL$\textemdash that is, $\trfib=(M\_ )^{-1}(\epi[\cL])$\textemdash and thus the span of trivial fibrations above is sent via $M\_$ to a span of fiberwise surjective natural transformations between $\cL$-structures as below.

\begin{tz}
  \node[](1) {$M_{P_f}$};
  \node[above of=1,xshift=-1.3cm](2) {$M_X$};
  \node[above of=1,xshift=1.3cm](3) {$M_Y$};

  \draw[->] (1) to  (2);
  \draw[->] (1) to  (3);
\end{tz}

To say that $f\colon X\to Y$ preserves a context $\Gamma$ means that, given interpretations $\alpha\colon\Gamma\to M_X$ and $\beta\colon \Gamma\to M_Y$, the diagram below commutes.
\begin{tz}
\node[](1) {$\Gamma$};
\node[below of=1,xshift=-1.5cm](2) {$M_X$};
\node[below of=1,xshift=1.5cm](3) {$M_Y$};

\draw[->] (1) to node[left,la]{$\alpha$} (2);
\draw[->] (1) to node[right,la]{$\beta$} (3);
\draw[->] (2) to node[below,la]{$M_f$} (3);
\end{tz}

Define an interpretation $\gamma\colon\Gamma\to M_{P_f}$ as the composite $\gamma=M_w\circ\alpha$.  Now the diagram below exhibits $M_X$ and $M_Y$ as $\cL$-equivalent in context $\Gamma$.

\begin{tz}
\node[](1) {$\Gamma$};
\node[below of=1,xshift=-1.5cm](2) {$M_X$};
\node[below of=1,xshift=1.5cm](3) {$M_Y$};
\node[below of=3,xshift=-1.5cm](4) {$M_{P_f}$};

\draw[->] (1) to node[left,la]{$\alpha$} (2);
\draw[->] (1) to node[right,la]{$\beta$} (3);
\draw[->] (1) to node[left,la]{$\gamma$} (4);
\draw[->] (4) to node[left,la]{$M_q$} node[right,la]{$\sim$} (2);
\draw[->] (4) to node[right,la]{$M_p$} node[left,la]{$\sim$} (3);
\end{tz}
Now the statement follows from \cref{thm:same_formulae}.
\end{proof}

\begin{prop}\label{under_triangle_D_Reedy_cofibrant} Let $\cL$ be a $\folds$ signature category, and consider a diagram $D\colon\cL^\op\to\cM$ together with a model structure on $\cM$. If $\cM$ is such that in the nerve-realization construction, condition $\bigtriangleup$ holds, then the diagram $D\colon \cL^\op\to \cM$ is Reedy cofibrant.  Conversely, when the latching maps of $D$ form a set of generating cofibrations for the weak factorization system $(\Cof,\trfib)$ in $\cM$, then condition $\bigtriangleup$ holds.
\end{prop}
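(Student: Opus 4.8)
The plan is to route both implications through a single identification: under the adjunction $|-| = -\ast_{\cL^\op} D \dashv N = \hom(D-,-)$, condition $\bigtriangleup$ is equivalent to the assertion that the latching maps of $D$ cofibrantly generate $(\Cof,\trfib)$. First I would settle the Reedy bookkeeping. Since $\cL$ is an inverse category, its opposite $\cL^\op$ is a direct Reedy category, so for a diagram $D\in\cM^{\cL^\op}$ all matching objects are terminal and cofibrancy is controlled entirely by latching maps. By \cref{thm:Reedy_wfs}, $D$ is Reedy cofibrant exactly when $\emptyset\to D$ lies in the left class of the Reedy weak factorization system, i.e.\ when every relative latching map $\widehat{\ell}^K(\emptyset\to D)$ is a cofibration in $\cM$. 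Because the latching object of the constant initial diagram is itself initial, this relative latching map is just the absolute latching map $\ell^K\colon L^K D\to D(K)$. So the first reduction is: $D$ is Reedy cofibrant if and only if every $\ell^K$ is a cofibration.

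Next I would record the adjunction computation. Recall from \cref{subsec:mono_epi} that $(\mono[\cL],\epi[\cL])$ is cofibrantly generated by the boundary inclusions $\cB=\{\partial\cL_K\to\cL_K\}_{K\in\cL}$, and from \cref{latching_map_realization} that the left adjoint sends each $\partial\cL_K\to\cL_K$ to the latching map $\ell^K$, via the isomorphisms $|\cL_K|\cong D(K)$ and $|\partial\cL_K|\cong L^K D$. Transposing lifting problems along $|-|\dashv N$ then gives, for every map $f$ in $\cM$, the chain $f\in N^{-1}(\epi[\cL])$ iff $Nf\in\cB^\boxslash$ iff $f\in|\cB|^\boxslash=\{\ell^K\}^\boxslash$, so $N^{-1}(\epi[\cL])=\{\ell^K\}^\boxslash$. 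Since condition $\bigtriangleup$ says precisely that $\trfib=N^{-1}(\epi[\cL])$, it is equivalent to $\trfib=\{\ell^K\}^\boxslash$, i.e.\ to the latching maps being a generating set of cofibrations for $(\Cof,\trfib)$.

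Both implications are then immediate. For the forward direction, condition $\bigtriangleup$ yields $\trfib=\{\ell^K\}^\boxslash$, so each latching map belongs to $\Cof={}^\boxslash(\{\ell^K\}^\boxslash)$ — a generating cofibration is always a cofibration — and hence $D$ is Reedy cofibrant by the first paragraph. For the converse, the hypothesis that the latching maps form a generating set of cofibrations is exactly $\trfib=\{\ell^K\}^\boxslash$, which by the displayed identity equals $N^{-1}(\epi[\cL])$; thus $(\Cof,\trfib)$ is right-lifted from $(\mono[\cL],\epi[\cL])$ and condition $\bigtriangleup$ holds.

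The only genuinely delicate step, and the one I would write out most carefully, is keeping the Reedy variances straight and verifying the identifications $|\cL_K|\cong D(K)$ and $|\partial\cL_K|\cong L^K D$ from \cref{latching_map_realization}, so that $|\cB|$ really is the set of latching maps; once these are in place, everything else is a formal manipulation of lifting properties and the transposition supplied by the adjunction $|-|\dashv N$.
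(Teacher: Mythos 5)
Your proposal is correct and follows essentially the same route as the paper, which simply cites \cref{latching_map_realization} (the identification of the right-lifted weak factorization system as the one generated by the latching maps, obtained by transposing lifting problems against the boundary inclusions $\cB$) together with the fact that generating cofibrations are cofibrations. Your write-up just makes explicit the reduction of Reedy cofibrancy to the absolute latching maps and the adjunction bookkeeping that the paper leaves implicit.
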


\begin{proof}
This follows from \cref{latching_map_realization}, as we know that the Reedy weak factorization system on $\Set^\cL$ is cofibrantly generated by the boundary inclusions.
\end{proof}

Under the same situation as above, we can now deduce the result we were looking for. Colloquially, this theorem says that if $f\colon X\to Y$ is weak equivalence between fibrant objects, then the $\cL$-structures $M_{X}$ and $M_Y$ satisfy the same formulae.

\begin{theorem}\label{thm:we_then_structures_verify_same_formulate} Let $\cM$ be a model structure, and $f\colon X\to Y$ a weak equivalence between fibrant objects. Consider also a context $\Gamma$, together with two interpretations $\alpha\colon \Gamma\to M_X$ and $\beta\colon \Gamma\to M_Y$ that are compatible with the map $M_f\colon M_X\to M_Y$. Then $M_{X}\vDash \phi[\alpha]$ if and only if $M_Y\vDash\phi[\beta]$. 
\end{theorem}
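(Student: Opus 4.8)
The plan is to reduce the statement to the two principal results already established in this section, namely \cref{thm:weak_equiv_to_Lequiv} and \cref{thm:same_formulae}, so that essentially no new work remains beyond unwinding the hypotheses. Throughout I would keep in mind that we are operating in the running situation of this subsection: a diagram $D\colon\cL^\op\to\cM$ for which the model structure on $\cM$ satisfies Condition $\bigtriangleup$, so that $M\_ =\hom(D-,-)$ carries objects of $\cM$ to $\cL$-structures and, crucially, trivial fibrations in $\cM$ to fiberwise surjective natural transformations of $\cL$-structures.

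First I would observe that the hypothesis that $\alpha$ and $\beta$ are \emph{compatible with} $M_f$ is precisely the assertion that the triangle formed by $\alpha\colon\Gamma\to M_X$, $\beta\colon\Gamma\to M_Y$, and $M_f\colon M_X\to M_Y$ commutes, i.e.\ that $M_f\circ\alpha=\beta$. This is exactly the meaning of ``$f$ preserves the context $\Gamma$'' as introduced in the proof of \cref{thm:weak_equiv_to_Lequiv}. Thus the hypotheses of part (2) of that proposition are met: $f\colon X\to Y$ is a weak equivalence between fibrant objects which preserves the context $\Gamma$.

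Second, I would invoke \cref{thm:weak_equiv_to_Lequiv}(2) to conclude that $M_X$ and $M_Y$ are $\cL$-equivalent in the context $\Gamma$. Concretely, this produces the span of fiberwise surjections under $\Gamma$
\begin{tz}
\node[](1) {$\Gamma$};
\node[below of=1,xshift=-1.5cm](2) {$M_X$};
\node[below of=1,xshift=1.5cm](3) {$M_Y$};
\node[below of=3,xshift=-1.5cm](4) {$M_{P_f}$};

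\draw[->] (1) to node[left,la]{$\alpha$} (2);
\draw[->] (1) to node[right,la]{$\beta$} (3);
\draw[->] (1) to node[left,la]{$\gamma$} (4);
\draw[->] (4) to node[left,la]{$M_q$} (2);
\draw[->] (4) to node[right,la]{$M_p$} (3);
\end{tz}
coming from the Ken Brown factorization of $f$, with $\gamma=M_w\circ\alpha$, exactly witnessing $M_X\simeq_{\cL}^{\Gamma} M_Y$.

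Finally, since $\var(\phi)\subset\Gamma$ by assumption, I would apply \cref{thm:same_formulae} to this $\cL$-equivalence, which yields directly that $M_X\vDash\phi[\alpha]$ if and only if $M_Y\vDash\phi[\beta]$, as desired. I do not expect any genuine obstacle here, as the substantive content has been front-loaded into \cref{thm:weak_equiv_to_Lequiv} (the homotopical input via Ken Brown's lemma and Condition $\bigtriangleup$) and \cref{thm:same_formulae} (the inductive logical argument). The only points requiring care are the bookkeeping identification of the compatibility condition with context-preservation, and the tacit assumption that Condition $\bigtriangleup$ is in force so that $M\_$ interacts correctly with the weak factorization systems.
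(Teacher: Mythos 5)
Your proposal is correct and follows essentially the same route as the paper: the paper's proof likewise consists of invoking \cref{thm:weak_equiv_to_Lequiv} to obtain the $\cL$-equivalence in context $\Gamma$ and then concluding via \cref{thm:same_formulae}. Your version merely spells out the bookkeeping (identifying compatibility of $\alpha$, $\beta$ with $M_f$ as context-preservation, and recalling the Ken Brown span) that the paper leaves implicit.
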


\begin{proof} By \cref{thm:weak_equiv_to_Lequiv} we know that $f$ is sent to an equivalence of $\cL$-structures, which in turn implies that $M_X$ and $M_Y$ satisfy the same formulae by \cref{thm:same_formulae}.
\end{proof}

\subsection*{The case of $\cat$} 
The diagram $D_\cat$ that we presented in \cref{def:Dcat} is Reedy cofibrant. Moreover, the latching maps of $D$ associated to the kinds $O,A,$ and $\dot{E}$ recover the generating usual set of cofibrations of the folk model structure on $\cat$ (see for example \cite[Theorem 3.4]{Joyal_Tierney}).

\subsection*{The case of $\twocat$} As pointed out in \cite[Section 11.2]{elements}, the diagram $E_{\twocat}$ that we presented in \cref{def:E2cat} is not Reedy cofibrant. Indeed, when we compute the latching map of $E_{\twocat}$ corresponding to the kind $T$ we obtain the map 

\begin{tz}[node distance=0.8cm]
    \node[](1) {$\boldsymbol{\cdot}$}; 
    \node[right of=1,xshift=0.8cm](3) {$\boldsymbol{\cdot}$};
    \node[above of=1,xshift=0.8cm](2) {$\boldsymbol{\cdot}$};
    \draw[->] (1) to (2);
    \draw[->] (2) to (3);
    \draw[->] (1) to (3);
    \node[right of=1,yshift=0.3cm](4) {$\neq$}; 
    \node[right of=4,xshift=1cm](5) {$\mathbb{3}$};
    \node at ($(4)!0.6!(5)$) {$\longrightarrow$}; 
    \end{tz}
which is not a cofibration, as we won't be able to impose the commutativity condition in $1$-cells. One can check that the latching maps associated to $I_1$ and $\dot{H}$ are also not cofibrations. We consider a new functor $D_{\twocat}\colon\cL_{\twocat}^\op\to\twocat$ that modifies the image of the kinds $T,I_1,\dot{E}$ to take into consideration the restrictions on where we can express equality:

\[
\begin{tikzcd}
 & \Sigma[\mathbb{3}]  \arrow[from=d, shift right=.6em] \arrow[from=d] \arrow[from=d, shift left=.6em] &  \Sigma[\mathbb{2}] \arrow[from=dl, shift right=.3em] \arrow[from=dl, shift left=.3em] \\  \mathbb{H}_{\cong} \arrow[from=r, shift right=.6em] \arrow[from=r] \arrow[from=r, shift left=.6em] \arrow[from=d, shift left=.3em] \arrow[from=d, shift right=.3em]& \Sigma[\mathbb{2}] \arrow[from=d, shift left=.3em] \arrow[from=d, shift right=.3em]& \mathbb{2} \arrow[from=l]\\  \mathbb{T}  \arrow[from=r, shift right=.6em] \arrow[from=r] \arrow[from=r, shift left=.6em] & \mathbb{2} \arrow[from=d, shift left=.3em] \arrow[from=d, shift right=.3em] & \mathbb{A} \arrow[from=l] \\
  & \mathbb{1}
\end{tikzcd} \qquad
\]
where
\[
\mathbb{T} \coloneqq 
\begin{tikzcd}[sep=small] 
& \bullet \arrow[dr] \arrow[d, phantom, "\scriptstyle\cong"] \\
\bullet \arrow[ur] \arrow[rr] & ~ & \bullet
\end{tikzcd}, \quad 
\mathbb{A} \coloneqq 
\begin{tikzcd}[sep=small] \bullet \arrow[out=45, in=-45, loop,  "\cong"'] 
\end{tikzcd}, \quad 
\Sigma\mathbb{2} \coloneqq 
\begin{tikzcd} \bullet \arrow[r, bend left] \arrow[r, bend right] \arrow[r, phantom, "\scriptstyle\Downarrow"] & \bullet 
\end{tikzcd}, \]

\[ 
\Sigma[\mathbb{3}] \coloneqq 
\begin{tikzcd}\bullet \arrow[r] \arrow[r, bend left=50, "\Downarrow"'] \arrow[r, bend right=50, "\Downarrow"] & \bullet
\end{tikzcd}, \quad \text{and} \quad\mathbb{H}_{\cong} \coloneq
\begin{tikzcd} 
\bullet \arrow[r, bend left] \arrow[rr, bend left=50,  "\cong"'] \arrow[rr, bend right=50, "\cong"] \arrow[r, bend right] \arrow[r, phantom, "\scriptstyle\Downarrow"] & \bullet \arrow[r, bend left] \arrow[r, bend right] \arrow[r, phantom, "\scriptstyle\Downarrow"] & \bullet 
\end{tikzcd}
\]
Now this diagram does satisfy the condition in $\bigtriangleup$.

\subsection{The case of equipments}

In this subsection, we prove a result of equivalence invariance in the framework of equipments. 

\begin{defn}\label{def:Ldblcat}
The $\folds$ signature for double categories is given by

\begin{tz}

\node[](1) {$\dot{E}$};
\node[below of=1,yshift=-1.5cm](2) {$S$};
\node[below of=2,yshift=-1.5cm](3) {$O$};


\node[right of=1,xshift=2.2cm,yshift=-.4cm](A') {$\dot{I}_{\mathrm{hor}}$};
\node[right of=1,yshift=-1.5cm](B') {$\dot{H}_{\circ}$};

\node[right of=2,yshift=-1.5cm](E') {$H$};
\node[right of=E',yshift=1.5cm](C') {$I_\mathrm{H}$};
\node[right of=E',xshift=1.4cm](D') {$T_\mathrm{H}$};

\node[left of=1,xshift=-2.2cm,yshift=-.4cm](A) {$\dot{I}_{\mathrm{ver}}$};
\node[left of=1,yshift=-1.5cm](B) {$\dot{V}_{\circ}$};

\node[left of=2,yshift=-1.5cm](E) {$V$};
\node[left of=E,yshift=1.5cm](C) {$I_\mathrm{V}$};
\node[left of=E,xshift=-1.4cm](D) {$T_\mathrm{V}$};

\draw[->] ($(1.south)-(2.25pt,0)$) to node[left,la,pos=0.4]{$b$} ($(2.north)-(2.25pt,0)$);
\draw[->] ($(1.south)+(2.25pt,0)$) to node[right,la,pos=0.4]{$f$} ($(2.north)+(2.25pt,0)$);

\draw[->]  ($(B'.south west)+(-3.5pt,3.5pt)$) to node[above,la,pos=0.6]{$r$} ($(2.north east)+(-3.5pt,3.5pt)$);
\draw[->]  ($(B'.south west)$) to node[over,la]{$c$} ($(2.north east)$);
\draw[->]  ($(B'.south west)+(3.5pt,-3.5pt)$) to node[below,la,pos=0.4]{$\ell$} ($(2.north east)+(3.5pt,-3.5pt)$);

\draw[->]  ($(B.south east)+(3.5pt,3.5pt)$) to node[above,la,pos=0.6]{$u$} ($(2.north west)+(3.5pt,3.5pt)$);
\draw[->]  ($(B.south east)$) to node[over,la]{$c$} ($(2.north west)$);
\draw[->]  ($(B.south east)-(3.5pt,3.5pt)$) to node[below,la,pos=0.4]{$d$} ($(2.north west)-(3.5pt,3.5pt)$);

\draw[->]  ($(D'.west)+(0,4.5pt)$) to node[above,la]{$r$} ($(E'.east)+(0,4.5pt)$);
\draw[->]  ($(D'.west)$) to node[over,la]{$c$} ($(E'.east)$);
\draw[->]  ($(D'.west)-(0,4.5pt)$) to node[below,la]{$\ell$} ($(E'.east)-(0,4.5pt)$);

\draw[->]  ($(D.east)+(0,4.5pt)$) to node[above,la]{$u$} ($(E.west)+(0,4.5pt)$);
\draw[->]  ($(D.east)$) to node[over,la]{$c$} ($(E.west)$);
\draw[->]  ($(D.east)-(0,4.5pt)$) to node[below,la]{$d$} ($(E.west)-(0,4.5pt)$);

\draw[->]  ($(2.south east)+(1.75pt,1.75pt)$) to node[above,la,pos=0.6]{$u$} ($(E'.north west)+(1.75pt,1.75pt)$);
\draw[->]  ($(2.south east)-(1.75pt,1.75pt)$) to node[below,la,pos=0.4]{$d$} ($(E'.north west)-(1.75pt,1.75pt)$);

\draw[->]  ($(2.south west)+(-1.75pt,1.75pt)$) to node[above,la,pos=0.6]{$\ell$} ($(E.north east)+(-1.75pt,1.75pt)$);
\draw[->]  ($(2.south west)+(1.75pt,-1.75pt)$) to node[below,la,pos=0.4]{$r$} ($(E.north east)+(1.75pt,-1.75pt)$);

\draw[->]  ($(E'.south west)+(-1.75pt,1.75pt)$) to node[above,la,pos=0.6]{$s$} ($(3.north east)+(-1.75pt,1.75pt)$);
\draw[->]  ($(E'.south west)+(1.75pt,-1.75pt)$) to node[below,la,pos=0.4]{$t$} ($(3.north east)+(1.75pt,-1.75pt)$);

\draw[->]  ($(E.south east)-(1.75pt,1.75pt)$) to node[below,la,pos=0.4]{$s$} ($(3.north west)-(1.75pt,1.75pt)$);
\draw[->]  ($(E.south east)+(1.75pt,1.75pt)$) to node[above,la,pos=0.6]{$t$} ($(3.north west)+(1.75pt,1.75pt)$);

\draw[->]  ($(C'.south west)+(-1.75pt,1.75pt)$) to node[above,la,pos=0.6,xshift=-2pt]{$i_H$} ($(E'.north east)+(-1.75pt,1.75pt)$);

\draw[->]  ($(C.south east)+(1.75pt,1.75pt)$) to node[above,la,pos=0.6,xshift=2pt]{$i_V$} ($(E.north west)+(1.75pt,1.75pt)$);


\draw[->,bend left=38] ($(B'.east)-(0,2.25pt)$) to node[left,la,pos=0.8]{$u$} ($(D'.north)-(2.25pt,0)$);
\draw[->,bend left=38] ($(B'.east)+(0,2.25pt)$) to node[right,la,pos=0.803]{$d$} ($(D'.north)+(2.5pt,.5pt)$);

\draw[->,bend right=38] ($(B.west)-(0,2.25pt)$) to node[right,la,pos=0.8]{$r$} ($(D.north)+(2.25pt,0)$);
\draw[->,bend right=38] ($(B.west)+(0,2.25pt)$) to node[left,la,pos=0.803]{$\ell$} ($(D.north)+(-2.5pt,.5pt)$);

\draw[->,bend left=35,w] ($(A'.south)-(4.5pt,0)$) to node[above,la,pos=0.6,yshift=2pt]{$i_{\mathrm{shor}}$} ($(2.east)$);

\draw[->,bend left=20,w] ($(A'.south)$) to node[left,la,pos=0.4]{$u$} ($(C'.north)+(1pt,3pt)$);
\draw[->,bend left=20,w] ($(A'.south)+(4.5pt,0)$) to node[right,la,pos=0.36]{$d$} ($(C'.north)+(4.5pt,0)$);

\draw[->,bend right=35,w] ($(A.south)+(4.5pt,0)$) to node[above,la,pos=0.6,yshift=2pt]{$i_{\mathrm{sver}}$} ($(2.west)$);

\draw[->,bend right=20,w] ($(A.south)-(4.5pt,0)$) to node[left,la,pos=0.36]{$\ell$} ($(C.north)-(4.5pt,0)$);
\draw[->,bend right=20,w] ($(A.south)$) to node[right,la,pos=0.4]{$r$} ($(C.north)+(-1pt,3pt)$);
\end{tz}

For $\cL_\dblcat$ to encode the structure of double categories we must ask it to satisfy several relations.

It must verify the relations from the underlying vertical and horizontal ($1$-)categories: identities and composition of $1$-cells
\begin{align*}
    s \cdot i_\mathrm{V} &= t \cdot i_\mathrm{V}            &s\cdot u &=s\cdot c        &s \cdot i_\mathrm{H} &= t \cdot i_\mathrm{H}       &s\cdot \ell &=s\cdot c\\
                         &                                  &t\cdot d &=t\cdot c        &                     &                             &t\cdot r&=t\cdot c\\
                         &                                  &t\cdot u &=s\cdot c        &                     &                             &t\cdot \ell &=s\cdot r
\end{align*}

equations describing the shape of squares
\begin{align*}
    s\cdot u        &=s\cdot\ell    &s\cdot d       &=t\cdot\ell\\
    t\cdot u        &=s\cdot r      &t\cdot d       &=t\cdot r
\end{align*}
squares identities
\begin{align*}
    u\cdot u_\shor  &=i_\mathrm{H}\cdot u       &\ell\cdot i_\sver  &=i_\mathrm{V}\cdot\ell\\
    d\cdot i_\shor  &=i_\mathrm{H}\cdot d       &r\cdot i_\sver     &=i_\mathrm{V}\cdot r\\
    \ell\cdot i_\shor   &=r \cdot i_\shor       &r\cdot i_\sver     &=\ell\cdot i_\sver
\end{align*}


equality between two parallel squares
\begin{align*}
    u\cdot b    &=u\cdot f  &\ell\cdot b    &=\ell\cdot f\\
    d\cdot b    &=d\cdot f  &r\cdot b       &=r\cdot f
\end{align*}
also equations governing vertical composition of squares
\begin{align*}
u\cdot u    &=u\cdot c      &d\cdot d   &=d\cdot c      &d\cdot u   &=u\cdot d
\end{align*}
plus its relation with composition of vertical morphisms
\begin{align*}
    \ell\cdot u         &=u\cdot\ell    &\ell\cdot d    &=d\cdot\ell    &\ell\cdot c    &=c\cdot\ell\\
    r\cdot u            &=u\cdot r      &r\cdot d       &=d\cdot r      &r\cdot c       &=c\cdot r
\end{align*}
Similarly, equations governing horizontal composition of squares
\begin{align*}
    \ell\cdot c       &=\ell\cdot\ell       &r\cdot c   &=r\cdot r      &\ell\cdot t        &=\ell\cdot r
\end{align*}
plus its relation with composition of horizontal morphisms
\begin{align*}
    u\cdot c    &=c\cdot u   &u\cdot \ell    &=\ell\cdot u   &d\cdot r   &=r\cdot d\\
    d\cdot c    &=c\cdot d   &u\cdot r          &=r\cdot u  &d\cdot\ell     &=\ell\cdot d
\end{align*}

\end{defn}

\begin{rem}
In describing the relations needed in $\cL_\dblcat$ to describe the structure of double categories, we have spelled out the relations governing the structure of the horizontal and vertical $2$-categories. This follows by observing \cref{def:L2cat} and the relationships written in \cref{def:Ldblcat}
\end{rem}

\begin{defn}\label{Ddblcat}Consider the diagram $D_\dblcat\colon \cL_\dblcat^\op\to \dblcat$

\begin{tz}

\node[](1) {$\mathbb{S}$};
\node[below of=1,yshift=-1.5cm](2) {$\mathbb{S}$};
\node[below of=2,yshift=-1.5cm](3) {$\mathbbm{1}$};


\node[right of=1,xshift=2.2cm,yshift=-.4cm](A') {$\bV\mathbbm{2}$};
\node[right of=1,yshift=-1.5cm](B') {$C_{\bH}$};

\node[right of=2,yshift=-1.5cm](E') {$\bH\mathbbm{2}$};
\node[right of=E',yshift=1.5cm](C') {$\bH A$};
\node[right of=E',xshift=1.4cm](D') {$\bH T$};

\node[left of=1,xshift=-2.2cm,yshift=-.4cm](A) {$\bH\mathbbm{2}$};
\node[left of=1,yshift=-1.5cm](B) {$C_{\bV}$};

\node[left of=2,yshift=-1.5cm](E) {$\bV\mathbbm{2}$};
\node[left of=E,yshift=1.5cm](C) {$\bV A$};
\node[left of=E,xshift=-1.4cm](D) {$\bV T$};

\draw[->] ($(2.north)-(2.25pt,0)$) to node[left,la,pos=0.6]{$b$} ($(1.south)-(2.25pt,0)$);
\draw[->] ($(2.north)+(2.25pt,0)$) to node[right,la,pos=0.6]{$f$} ($(1.south)+(2.25pt,0)$);

\draw[->]  ($(2.north east)+(-3.5pt,3.5pt)$) to node[above,la,pos=0.4]{$r$} ($(B'.south west)+(-3.5pt,3.5pt)$);
\draw[->]  ($(2.north east)$) to node[over,la]{$c$} ($(B'.south west)$);
\draw[->]  ($(2.north east)+(3.5pt,-3.5pt)$) to node[below,la,pos=0.6]{$\ell$} ($(B'.south west)+(3.5pt,-3.5pt)$);

\draw[->]  ($(2.north west)+(3.5pt,3.5pt)$) to node[above,la,pos=0.4]{$u$} ($(B.south east)+(3.5pt,3.5pt)$);
\draw[->]  ($(2.north west)$) to node[over,la]{$c$} ($(B.south east)$);
\draw[->]  ($(2.north west)-(3.5pt,3.5pt)$) to node[below,la,pos=0.6]{$d$} ($(B.south east)-(3.5pt,3.5pt)$);

\draw[->]  ($(E'.east)+(0,4.5pt)$) to node[above,la]{$r$} ($(D'.west)+(0,4.5pt)$);
\draw[->]  ($(E'.east)$) to node[over,la]{$c$} ($(D'.west)$);
\draw[->]  ($(E'.east)-(0,4.5pt)$) to node[below,la]{$\ell$} ($(D'.west)-(0,4.5pt)$);

\draw[->]  ($(E.west)+(0,4.5pt)$) to node[above,la]{$u$} ($(D.east)+(0,4.5pt)$);
\draw[->]  ($(E.west)$) to node[over,la]{$c$} ($(D.east)$);
\draw[->]  ($(E.west)-(0,4.5pt)$) to node[below,la]{$d$} ($(D.east)-(0,4.5pt)$);

\draw[->]  ($(E'.north west)+(1.75pt,1.75pt)$) to node[above,la,pos=0.4]{$u$} ($(2.south east)+(1.75pt,1.75pt)$);
\draw[->]  ($(E'.north west)-(1.75pt,1.75pt)$) to node[below,la,pos=0.6]{$d$} ($(2.south east)-(1.75pt,1.75pt)$);

\draw[->]  ($(E.north east)+(-1.75pt,1.75pt)$) to node[above,la,pos=0.4]{$\ell$} ($(2.south west)+(-1.75pt,1.75pt)$);
\draw[->]  ($(E.north east)+(1.75pt,-1.75pt)$) to node[below,la,pos=0.6]{$r$} ($(2.south west)+(1.75pt,-1.75pt)$);

\draw[->]  ($(3.north east)+(-1.75pt,1.75pt)$) to node[above,la,pos=0.4]{$s$} ($(E'.south west)+(-1.75pt,1.75pt)$);
\draw[->]  ($(3.north east)+(1.75pt,-1.75pt)$) to node[below,la,pos=0.6]{$t$} ($(E'.south west)+(1.75pt,-1.75pt)$);

\draw[->]  ($(3.north west)-(1.75pt,1.75pt)$) to node[below,la,pos=0.6]{$s$} ($(E.south east)-(1.75pt,1.75pt)$);
\draw[->]  ($(3.north west)+(1.75pt,1.75pt)$) to node[above,la,pos=0.4]{$t$} ($(E.south east)+(1.75pt,1.75pt)$);

\draw[->]  ($(E'.north east)+(-1.75pt,1.75pt)$) to node[above,la,pos=0.4,xshift=-2pt]{$i_H$} ($(C'.south west)+(-1.75pt,1.75pt)$);

\draw[->]  ($(E.north west)+(1.75pt,1.75pt)$) to node[above,la,pos=0.4,xshift=2pt]{$i_V$} ($(C.south east)+(1.75pt,1.75pt)$);


\draw[->,bend right=38] ($(D'.north)-(2.25pt,0)$) to node[left,la,pos=0.2]{$u$} ($(B'.east)-(0,2.25pt)$);
\draw[->,bend right=38] ($(D'.north)+(2.5pt,.5pt)$) to node[right,la,pos=0.197]{$d$} ($(B'.east)+(0,2.25pt)$);

\draw[->,bend left=38] ($(D.north)+(2.25pt,0)$) to node[right,la,pos=0.2]{$r$} ($(B.west)-(0,2.25pt)$);
\draw[->,bend left=38] ($(D.north)+(-2.5pt,.5pt)$) to node[left,la,pos=0.197]{$\ell$} ($(B.west)+(0,2.25pt)$);

\draw[->,bend right=35,w] ($(2.east)$) to node[above,la,pos=0.4,yshift=2pt]{$i_{\mathrm{shor}}$} ($(A'.south)-(4.5pt,0)$);
\draw[->,bend right=20,w] ($(C'.north)+(1pt,3pt)$) to node[left,la,pos=0.6]{$u$} ($(A'.south)$);
\draw[->,bend right=20,w] ($(C'.north)+(4.5pt,0)$) to node[right,la,pos=0.64]{$d$} ($(A'.south)+(4.5pt,0)$);

\draw[->,bend left=35,w] ($(2.west)$) to node[above,la,pos=0.4,yshift=2pt]{$i_{\mathrm{sver}}$} ($(A.south)+(4.5pt,0)$);
\draw[->,bend left=20,w] ($(C.north)-(4.5pt,0)$) to node[left,la,pos=0.64]{$\ell$} ($(A.south)-(4.5pt,0)$);
\draw[->,bend left=20,w] ($(C.north)+(-1pt,3pt)$) to node[right,la,pos=0.6]{$r$} ($(A.south)$);
\end{tz}
where $T$ and $A$ as the $2$-categories generated by the diagrams below

\[
T \coloneqq 
\begin{tikzcd}[sep=small] 
& \bullet \arrow[dr] \arrow[d, phantom, "\scriptstyle\cong"] \\
\bullet \arrow[ur] \arrow[rr] & ~ & \bullet
\end{tikzcd}, \quad 
A \coloneqq 
\begin{tikzcd}[sep=small] \bullet \arrow[out=45, in=-45, loop,  "\cong"'] 
\end{tikzcd},
 \]
the double category $\mathbb{S}$ is the double category generated by a square $\mathbb{S} = 
\begin{tikzcd} 
\bH\mathbbm{2}\times\bV\mathbbm{2}
\end{tikzcd},$ and 

\begin{tz}
\node[](1)      {$\bullet$};
\node[right of=1](2)        {$\bullet$};
\node[right of=2](3)        {$\bullet$};
\node[right of=3](4)        {$\bullet$};

\node[below of=2](5)        {$\bullet$};
\node[below of=3](6)        {$\bullet$};

\node[below of=5](8)        {$\bullet$};
\node[left of=8](7)         {$\bullet$};
\node[below of=6](9)        {$\bullet$};
\node[right of=9](10)       {$\bullet$};

\draw[d] (1) to (2);
\draw[->] (2) to (3);
\draw[d] (3) to (4);

\draw[->] (5) to (6);

\draw[d] (7) to (8);
\draw[->] (8) to (9);
\draw[d] (9) to (10);

\draw[->] (1) to (7);
\draw[->] (2) to (5);
\draw[->] (3) to (6);
\draw[->] (4) to (10);
\draw[->] (5) to (8);
\draw[->] (6) to (9);

\node[left of=5,xshift=-1.5cm](N)      {$C_\bV$}; 

\node[la] at ($(N)!0.5!(5)-(0.75cm,0)$) {$=$};

\node[la] at ($(1)!0.5!(8)$) {$\cong$};
\node[la] at ($(2)!0.5!(6)$) {$\alpha$};
\node[la] at ($(5)!0.5!(9)$) {$\beta$};
\node[la] at ($(3)!0.5!(10)$) {$\cong$};


\node[right of=4,xshift=2cm,yshift=0.75cm](1)      {$\bullet$};
\node[below of=1](4)    {$\bullet$};
\node[right of=4](5)    {$\bullet$};
\node[right of=5](6)    {$\bullet$};

\node[above of=6](3)    {$\bullet$};

\node[below of=4](7)    {$\bullet$};
\node[below of=5](8)    {$\bullet$};
\node[below of=6](9)    {$\bullet$};

\node[below of=7](10)   {$\bullet$};
\node[below of=9](12)   {$\bullet$};

\draw[->] (1) to (3);
\draw[->] (4) to (5);
\draw[->] (5) to (6);
\draw[->] (7) to (8);
\draw[->] (8) to (9);
\draw[->] (10) to (12);

\draw[d] (1) to (4);
\draw[->] (4) to (7);
\draw[d] (7) to (10);

\draw[->] (5) to (8);

\draw[d] (3) to (6);
\draw[->] (6) to (9);
\draw[d] (9) to (12);

\node[left of=5,xshift=-1.5cm,yshift=-0.75cm](N)      {$C_\bH$}; 

\node[la] at ($(N)!0.5!(5)-(0.75cm,0.35cm)$) {$=$};

\node[la] at ($(1)!0.5!(6)$) {$\cong$};
\node[la] at ($(4)!0.5!(8)$) {$\alpha$};
\node[la] at ($(5)!0.5!(9)$) {$\beta$};
\node[la] at ($(7)!0.5!(12)$) {$\cong$};
\end{tz}

\end{defn}

We can now consider the nerve-realization construction for the case of double categories: we can build the diagram below.

\begin{tz}
\node[](1)  {$\cL_{\dblcat}^\op$};
\node[below of=1,xshift=-1.5cm,yshift=-0.60cm](2) {$\dblcat$};
\node[below of=1,xshift=1.5cm,yshift=-0.60cm](3) {$\Set^{\cL_{\dblcat}}$};

\draw[->] (1) to node[left,la]{$D_\dblcat$} (2);
\draw[->] (1) to node[right,la]{$\yo$} (3);

\draw[->, bend right=20] ($(2.east)-(-4pt,5pt)$) to node[below,la]{$\hom(D_{\dblcat}-,-)$} ($(3.west)-(0,5pt)$);
\draw[->, bend right=20] ($(3.west)+(0,5pt)$) to node[above,la]{$-\ast_\cC D_\dblcat$} ($(2.east)+(4pt,5pt)$);
\node[la] at ($(2.east)!0.5!(3.west)$) {$\bot$};
\end{tz}
with the model structure on $\dblcat$ being that of \cref{MS_equipments}, and the diagram $D_\dblcat$ the one defined in \cref{Ddblcat}. 

As before, mapping out of $D_\dblcat$ induces a functor $M_{\_}\colon\dblcat\to\Set^{\cL_{\dblcat}}$ whose image lies in the full subcategory of $\cL$-structures\textemdash this is true, as the latching maps for the diagram $D_\dblcat$ at the relation objects $\dot{I}_\ver, \dot{V}_\circ, \dot{E},\dot{H}_\circ,$ and $\dot{I}_\hor$ are epimorphisms.

\begin{lem}\label{lem:surviving_latching_maps} The latching maps of the diagram $D_{\dblcat}$ generate the weak factorization system $(\Cof,\Fib\cap\cW)$ on $\dblcat$ with the model structure of \cref{MS_equipments}. Moreover, it is enough to consider the latching maps associated to the kinds $O,V,H,S$ and $\dot{E}$.
\end{lem}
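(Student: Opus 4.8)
The plan is to compute each latching map of $D_\dblcat$ explicitly through \cref{latching_map_realization}, which identifies the latching map at a kind $K$ with the realization $|\partial\cL_K|\to|\cL_K|\cong D_\dblcat(K)$ of the boundary inclusion of the representable at $K$ in $\cL=\cL_\dblcat$, and then to match these against the generators $\cI$ of \cref{def:cofib}. First I would dispatch the kinds $O,H,V,S,\dot E$. As $O$ has degree zero, $\partial\cL_O=\emptyset$ and the latching map is $\emptyset\to D_\dblcat(O)=\mathbbm1$, i.e.\ generator (i). The only non-identity arrows out of $H$ are $s,t\colon H\to O$, so $\partial\cL_H$ realizes to $\mathbbm1\sqcup\mathbbm1$ and the latching map is $\mathbbm1\sqcup\mathbbm1\to\bH\mathbbm2$, generator (ii); dually $V$ yields generator (iii). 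For $S$ the boundary is the union of the four face maps $u,d\colon S\to H$ and $\ell,r\colon S\to V$ glued along their shared corners in $O$, whence $|\partial\cL_S|\cong\partial(\bH\mathbbm2\times\bV\mathbbm2)$ and the latching map is generator (iv). Finally $\dot E$ depends via $b,f\colon\dot E\to S$ on two parallel copies of $S$ sharing a common boundary, so $|\partial\cL_{\dot E}|\cong\bH\mathbbm2\times\bV\mathbbm2\sqcup_{\partial(\bH\mathbbm2\times\bV\mathbbm2)}\bH\mathbbm2\times\bV\mathbbm2$ and the latching map is the fold map, generator (v). This exhibits the five latching maps at $O,H,V,S,\dot E$ as precisely the five members of $\cI$.

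It then suffices to show that the latching maps at the remaining kinds $I_{\mathrm H},I_{\mathrm V},T_{\mathrm H},T_{\mathrm V}$ and at the relation symbols $\dot I_\hor,\dot I_\ver,\dot H_\circ,\dot V_\circ$ all lie in $\cof(\cI)$: enlarging a generating set by maps already in the left class it generates leaves $\cof(\cI)=\Cof$ unchanged, so $\cof(\{\text{all latching maps}\})=\cof(\cI)=\Cof$, and the ``moreover'' is then immediate since the five maps above already exhaust $\cI$. By the horizontal--vertical symmetry of $\cL_\dblcat$ and of $D_\dblcat$ it is enough to treat the horizontal kinds $I_{\mathrm H},T_{\mathrm H},\dot I_\hor,\dot H_\circ$. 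Inspecting \cref{Ddblcat}, I would check that each of these latching maps $j\colon|\partial\cL_K|\to D_\dblcat(K)$ is a bijection on objects and on horizontal and vertical morphisms, and differs from an isomorphism only in that $D_\dblcat(K)$ carries finitely many additional squares with boundary already present in $|\partial\cL_K|$: an invertible globular square witnessing an identity ($I_{\mathrm H}\mapsto\bH A$) or a composite ($T_{\mathrm H}\mapsto\bH T$), or the witnessing/identifying squares imposed by the relations defining $C_{\mathrm H}$ (for $\dot H_\circ$).

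To conclude that such a $j$ is a cofibration I would verify its left lifting property directly against an arbitrary trivial fibration $p\colon\bX\to\bY$, using the description in \cref{prop:trivfib}. In a commutative square with top $|\partial\cL_K|\to\bX$ and bottom $D_\dblcat(K)\to\bY$, the objects and the horizontal and vertical morphisms of $D_\dblcat(K)$ are already fixed by the top map since $j$ is bijective on them, and it only remains to produce the finitely many extra squares. Each such square has its boundary already defined in $\bX$, and since $p$ is fully faithful on squares there is a unique square in $\bX$ with that boundary lying over the prescribed square in $\bY$; full faithfulness also reflects invertibility and any imposed equality of squares from $\bY$ to $\bX$. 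This yields the required lift, so $j\in{}^\boxslash(\cI^\boxslash)=\cof(\cI)$. (Equivalently, one may present $j$ as a relative $\cI$-cell complex built from pushouts of the square-adding generator (iv) and the square-identifying generator (v).)

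The main obstacle is the bookkeeping in this last step for the composition relation symbols $\dot H_\circ,\dot V_\circ$, whose realizations $C_{\mathrm H},C_{\mathrm V}$ are the most intricate objects appearing in $D_\dblcat$. One must read off the squares they impose, together with their invertibility and coherence constraints, from the two pasting diagrams defining $C_{\mathrm H}$ and $C_{\mathrm V}$ in \cref{Ddblcat}, and confirm that $j$ is genuinely bijective on objects and on horizontal and vertical morphisms and that every added square has boundary contained in the realized boundary $|\partial\cL_K|$; once this is secured, the full-faithfulness-on-squares argument applies verbatim. Everything else reduces to a routine degreewise computation of boundaries of representables in the inverse category $\cL_\dblcat$, with the duality argument removing half of the cases.
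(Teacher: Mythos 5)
Your computation of the latching maps at the kinds $O,H,V,S,\dot E$ coincides with the paper's proof, which performs exactly the same boundary-of-representable calculations and identifies these five maps with the generating set $\cI$ of \cref{def:cofib}; your observation that the remainder of the lemma reduces to showing that the other latching maps lie in $\cof(\cI)$ is also the right way to make the ``moreover'' precise (the paper dispatches this step with a bare ``follows by inspection''). The gap is in the inspection itself. Your blanket claim that every remaining latching map is bijective on objects and on horizontal and vertical morphisms is correct for $I_{\mathrm H},I_{\mathrm V},T_{\mathrm H},T_{\mathrm V},\dot H_\circ,\dot V_\circ$, but it fails for the identity-square relation symbols $\dot I_{\hor}$ and $\dot I_{\ver}$ as $D_{\dblcat}$ is defined in \cref{Ddblcat}. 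The boundary of $\dot I_{\hor}$ contains the two arrows $u,d\colon\dot I_{\hor}\to I_{\mathrm H}$, so the latching object at $\dot I_{\hor}$ is obtained by gluing two copies of $D_{\dblcat}(I_{\mathrm H})=\bH A$ onto the top and bottom edges of a free square and identifying its two vertical edges; in particular it contains the non-identity horizontal endomorphisms $a_{\mathrm{top}},a_{\mathrm{bot}}$ coming from the generating $1$-cell of $A$. Since $D_{\dblcat}(\dot I_{\hor})=\bV\mathbbm2$ has only identity horizontal morphisms, the latching map collapses $a_{\mathrm{top}}$ and $a_{\mathrm{bot}}$ to identities, so it is not injective on horizontal morphisms and your forced-lift argument breaks down there.

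Moreover this is not a repairable bookkeeping slip in your argument alone: that latching map genuinely fails to be a cofibration. Take $\bX$ to be the double category with objects $x,y$, a single vertical morphism $v\colon x\to y$, horizontal endomorphism monoids $\mathbb Z$ at $x$ and at $y$, and exactly one square for each compatible boundary; the projection $p\colon\bX\to\bV\mathbbm2$ killing the integers is surjective on objects, full on horizontal and vertical morphisms, and fully faithful on squares, hence a trivial fibration by \cref{prop:trivfib}, yet the lifting problem whose top map sends $a_{\mathrm{top}}$ to $1\in\mathbb Z$ has no solution, because any lift must send $\id_x$ to $\id_x=0$. So either the statement must exclude $\dot I_{\hor},\dot I_{\ver}$, or---more plausibly, by analogy with the replacement of $E_{\twocat}$ by $D_{\twocat}$ at the kind $I_1$---the values $D_{\dblcat}(\dot I_{\hor})$ and $D_{\dblcat}(\dot I_{\ver})$ should be ``pseudo-identity-square classifiers,'' i.e.\ the quotients of the corresponding latching objects identifying the generating square with the conjugate of the horizontal identity square by the two invertibility witnesses, rather than $\bV\mathbbm2$ and $\bH\mathbbm2$ on the nose. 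With that correction these two latching maps do become bijective on objects and on horizontal and vertical morphisms and are pushouts of generator (v), and your lifting argument against trivial fibrations then handles all the remaining kinds. As written, however, the step asserting bijectivity for all eight remaining kinds is false and the proof does not close.
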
 

\begin{proof}
We begin by proving that the latching map for the kinds $O,H,V,S,$ and $\dot{E}$, recover the set $\cI$ of generating cofibrations of \cref{def:cofib}. 

In order to compute each latching map, we begin by the corresponding latching object.  By \cref{latching_map_realization} we first want to describe the boundary of the representable $\partial\cL_K$ for each kind $K$ of $\cL_\dblcat$ and proceed to compute the colimit of the diagram $D_\dblcat$ weighted by $\partial\cL_K$. Recall that the boundary $\partial\cL_K$ can be seen as the subfunctor of $\cL_K$ containing all elements but $\id_K$. In particular, for any kind $H$ with $\deg H\geq\deg K$ we know that $\partial\cL_K(H)=\emptyset$\textemdash this means that we will only look at the ``fan down'' from the given kind. 

For $K=O$, this computation is very simple as we have that $\partial\cL_O(O)=\emptyset$, and the corresponding latching map is 
$$\emptyset\longrightarrow \mathbbm{1}.$$

For $K=H$, when we look at $\partial\cL_H$ we get 

\begin{tz}
\node[](1)      {$\cL(H,H)^{\setminus\id_H}$};
\node[right of=1,xshift=-0.1cm](2)       {$=\emptyset$};
\node[below of=1,xshift=-1.5cm,yshift=-0.5cm](3)         {$\cL(H,O)$};
\node[right of=3,xshift=-0.1cm](4)       {$=\{s,t\}$};

\draw[->] ($(1.south)-(14pt,0)$) to node[above,la,xshift=-2pt,pos=0.5]{$s$} ($(3.north)-(4pt,0)$);
\draw[->] ($(1.south)-(7pt,0)$) to node[below,la,pos=0.4]{$t$} ($(3.north)+(3pt,0)$);
\end{tz}
and then the corresponding latching map is the inclusion

\begin{tz}
\node[](1)      {$\mathbbm{1}\sqcup\mathbbm{1}$};
\node[right of=1](2)        {$\bH\mathbbm{2}$};

\draw[->] (1) to  (2);
\end{tz}    
Similarly, for $K=V$ we obtain the map $\mathbbm{1}\sqcup\mathbbm{1}\rightarrow\bV\mathbbm{2}$. 

For $K=S$, the weight $\partial\cL_S$ is illustrated in the following diagram 

\begin{tz}

\node[](2) {$\cL(S,S)^{\setminus\id_S}$};
\node[below of=2,yshift=-2cm](3) {$\cL(S,O)$};

\node[right of=2,xshift=1cm,yshift=-1.75cm](E') {$\cL(S,H)$};

\node[left of=2,xshift=-1cm,yshift=-1.75cm](E) {$\cL(S,V)$};

\node[right of=2,xshift=-0.1cm](2')     {$=\emptyset$};
\node[right of=E'](E'eq)   {$=\{u,d\}$};

\node[left of=E,xshift=0.1cm](Eeq)   {$\{\ell,r\}=$};

\node[below of=3,yshift=0.3cm](3eq) {$\left\{
\begin{aligned}
    &sl=su,\, sr=tu
    \\
    &tl=sd,\, tr=td
\end{aligned}
\right\}$};

\node[la] at ($(3)!0.5!(3eq)+(0,2pt)$) {$\verteq$};

\draw[->]  ($(2.south east)+(1.75pt,1.75pt)$) to node[above,la,pos=0.6]{$u$} ($(E'.north west)+(1.75pt,1.75pt)$);
\draw[->]  ($(2.south east)-(1.75pt,1.75pt)$) to node[below,la,pos=0.4]{$d$} ($(E'.north west)-(1.75pt,1.75pt)$);

\draw[->]  ($(2.south west)+(-1.75pt,1.75pt)$) to node[above,la,pos=0.6]{$\ell$} ($(E.north east)+(-1.75pt,1.75pt)$);
\draw[->]  ($(2.south west)+(1.75pt,-1.75pt)$) to node[below,la,pos=0.4]{$r$} ($(E.north east)+(1.75pt,-1.75pt)$);

\draw[->]  ($(E'.south west)+(-1.75pt,1.75pt)$) to node[above,la,pos=0.6]{$s$} ($(3.north east)+(-1.75pt,1.75pt)$);
\draw[->]  ($(E'.south west)+(1.75pt,-1.75pt)$) to node[below,la,pos=0.4]{$t$} ($(3.north east)+(1.75pt,-1.75pt)$);

\draw[->]  ($(E.south east)-(1.75pt,1.75pt)$) to node[below,la,pos=0.4]{$s$} ($(3.north west)-(1.75pt,1.75pt)$);
\draw[->]  ($(E.south east)+(1.75pt,1.75pt)$) to node[above,la,pos=0.6]{$t$} ($(3.north west)+(1.75pt,1.75pt)$);
\end{tz}
which gives rise to the diagram in $\dblcat$ below, whose colimit is the latching object of $D$ for $S$

\begin{tz}
\node[](1) {$\mathbbm{1}$};
\node[above of=1,yshift=-1cm](1')  {$s\ell=su$};

\node[right of=1,xshift=1cm](2){$\mathbbm{1}$};
\node[above of=2,yshift=-1cm](1')  {$sr=tu$};

\node[right of=2,xshift=1cm](3){$\mathbbm{1}$};
\node[above of=3,yshift=-1cm](3')  {$t\ell=sd$};

\node[right of=3,xshift=1cm](4){$\mathbbm{1}$};
\node[above of=4,yshift=-1cm](4')  {$tr=td$};

\node[left of=1,xshift=1cm,yshift=-2.6cm](6) {$\bV\mathbbm{2}$};
\node[below of=6,yshift=1cm](6')  {$r$};

\node[left of=6,xshift=1](5) {$\bV\mathbbm{2}$};
\node[below of=5,yshift=1cm](5')  {$\ell$};

\node[right of=4,xshift=-1cm,yshift=-2.6cm](7) {$\bH\mathbbm{2}$};
\node[below of=7,yshift=1cm](7')  {$u$};

\node[right of=7,xshift=1](8) {$\bH\mathbbm{2}$};
\node[below of=8,yshift=1cm](8')  {$d$};

\draw[->] (1) to node[la,left]{$s$} (5);
\draw[->] (1) to node[la, above,pos=0.1]{$s$} (7);

\draw[->,w] (2) to node[la,above,pos=0.1]{$s$} (6);
\draw[->,w] (2) to node[la,above,pos=0.1]{$t$} (7);

\draw[->,w] (3) to node[la,above,pos=0.1]{$t$} (5);
\draw[->,w] (3) to node[la,above,pos=0.1]{$s$} (8);

\draw[->] (4) to node[la,above]{$t$} (8);
\draw[->,w] (4) to node[la, above, pos=0.1]{$t$} (6);
\end{tz}

Then the corresponding latching map associated to $S$ is 

\begin{tz}
\node[](1)      {$\partial(\bH\mathbbm{2}\times\bV\mathbbm{2})$};
\node[right of=1,xshift=1.5cm](2)        {$\bH\mathbbm{2}\times\bV\mathbbm{2}$};
\draw[->] (1) to  (2);
\end{tz}    

For the kind $\dot{E}$, the weight $\partial\cL_{\dot{E}}$ is depicted in the diagram below.

\begin{tz}
\node[](1) {$\cL(\dot{E},\dot{E})^{\setminus\id_{\dot{E}}}$};
\node[below of=1,yshift=-1cm](2) {$\cL(\dot{E},S)$};
\node[below of=2,yshift=-2cm](3) {$\cL(\dot{E},O)$};

\node[right of=2,xshift=1cm,yshift=-1.75cm](E') {$\cL(\dot{E},H)$};

\node[left of=2,xshift=-1cm,yshift=-1.75cm](E) {$\cL(\dot{E},V)$};

\node[right of=1,xshift=-0.1cm](1')     {$=\emptyset$};
\node[right of=2,xshift=-0.1cm](2')     {$=\{b,f\}$};
\node[right of=E',xshift=.5cm](E'eq)   {$=\left\{
\begin{aligned}
    &uf=ub
    \\
    &df=db
\end{aligned}
\right\}$};

\node[left of=E,xshift=-1.8cm](Eeq)   {$\left\{
\begin{aligned}
    &suf=sub=s\ell f=s\ell b\\
    &sdf=sdb=t\ell b=t\ell b\\
    &tuf=tub=srf=srb\\
    &tdf=tdb=trb=trf
\end{aligned}
\right\}=$};

\node[below of=3,yshift=0.2cm](3eq) {$\left\{
\begin{aligned}
    &\ell b=\ell f
    \\
    &rb=rf
\end{aligned}
\right\}$};

\node[la] at ($(3)!0.5!(3eq)+(0,2pt)$) {$\verteq$};

\draw[->] ($(1.south)-(2.25pt,0)$) to node[left,la,pos=0.4]{$b$} ($(2.north)-(2.25pt,0)$);
\draw[->] ($(1.south)+(2.25pt,0)$) to node[right,la,pos=0.4]{$f$} ($(2.north)+(2.25pt,0)$);

\draw[->]  ($(2.south east)+(1.75pt,1.75pt)$) to node[above,la,pos=0.6]{$u$} ($(E'.north west)+(1.75pt,1.75pt)$);
\draw[->]  ($(2.south east)-(1.75pt,1.75pt)$) to node[below,la,pos=0.4]{$d$} ($(E'.north west)-(1.75pt,1.75pt)$);

\draw[->]  ($(2.south west)+(-1.75pt,1.75pt)$) to node[above,la,pos=0.6]{$\ell$} ($(E.north east)+(-1.75pt,1.75pt)$);
\draw[->]  ($(2.south west)+(1.75pt,-1.75pt)$) to node[below,la,pos=0.4]{$r$} ($(E.north east)+(1.75pt,-1.75pt)$);

\draw[->]  ($(E'.south west)+(-1.75pt,1.75pt)$) to node[above,la,pos=0.6]{$s$} ($(3.north east)+(-1.75pt,1.75pt)$);
\draw[->]  ($(E'.south west)+(1.75pt,-1.75pt)$) to node[below,la,pos=0.4]{$t$} ($(3.north east)+(1.75pt,-1.75pt)$);

\draw[->]  ($(E.south east)-(1.75pt,1.75pt)$) to node[below,la,pos=0.4]{$s$} ($(3.north west)-(1.75pt,1.75pt)$);
\draw[->]  ($(E.south east)+(1.75pt,1.75pt)$) to node[above,la,pos=0.6]{$t$} ($(3.north west)+(1.75pt,1.75pt)$);
\end{tz}

This gives us the diagram below in $\dblcat$, whose colimit is the latching object $\cL_{\dot{E}} D$. For the sake of readability, each element corresponding to $\cL(\dot{E},V)$ we will only write two of the terms in the equalities in the diagram above.

\begin{tz}
\node[](1) {$\mathbbm{1}$};
\node[above of=1,yshift=-1cm](1')  {$s\ell f=suf$};

\node[right of=1,xshift=1.5cm](2){$\mathbbm{1}$};
\node[above of=2,yshift=-1cm](2')  {$srb=tub$};

\node[right of=2,xshift=1.5cm](3){$\mathbbm{1}$};
\node[above of=3,yshift=-1cm](3')  {$t\ell b=sdb$};

\node[right of=3,xshift=1.5cm](4){$\mathbbm{1}$};
\node[above of=4,yshift=-1cm](4')  {$trf=tdf$};


\node[below of=1,xshift=0.7cm,yshift=-1.1cm](5) {$\bV\mathbbm{2}$};
\node[left of=5,xshift=0.4cm](5')  {$\ell b=\ell f$};

\node[right of=5,xshift=0.7cm](6) {$\bV\mathbbm{2}$};
\node[left of=6,yshift=0.3cm,xshift=0.7cm](6')  {$rb=rf$};

\node[below of=4,xshift=-0.7cm,yshift=-1.1cm](8) {$\bH\mathbbm{2}$};
\node[right of=8,xshift=-0.4cm](8')  {$df=db$};

\node[left of=8,xshift=-1cm](7) {$\bH\mathbbm{2}$};
\node[right of=7,yshift=0.3cm,xshift=-0.7cm](7')  {$uf=ub$};


\node[below of=5,xshift=-1.5cm,yshift=-1.4cm](9) {$\bH\mathbbm{2}\times\bV\mathbbm{2}$};
\node[below of=8,xshift=1.5cm,yshift=-1.4cm](10) {$\bH\mathbbm{2}\times\bV\mathbbm{2}$};

\draw[->] (1) to node[la,left]{$s$} (5);
\draw[->] (1) to node[la, above,pos=0.1]{$s$} (7);

\draw[->,w] (2) to node[la,left,pos=0.1]{$t$} (6);
\draw[->,w] (2) to node[la,right,pos=0.1]{$s$} (7);

\draw[->,w] (3) to node[la,above,pos=0.1]{$t$} (5);
\draw[->,w] (3) to node[la,above,pos=0.1]{$s$} (8);

\draw[->] (4) to node[la,right]{$t$} (8);
\draw[->,w] (4) to node[la, above, pos=0.1]{$t$} (6);

\draw[->] (5) to node[la,left]{$\ell$}  (9);
\draw[->] (6) to node[la,right,pos=0.8]{$r$} (9);

\draw[->] (7) to node[la,left]{$u$} (10);
\draw[->] (8) to node[la,right]{$d$} (10);

\draw[->,w] (5) to node[la,below,pos=0.8]{$\ell$} (10);
\draw[->,w] (6) to node[la,left,above,pos=0.7]{$r$} (10);

\draw[->,w] (7) to node[la,above,pos=0.7]{$r$} (9);
\draw[->,w] (8) to node[la,below,pos=0.8]{$\ell$} (9);

\end{tz}

This gives us 
\begin{tz}
\node[](1)      {$\bH\mathbbm{2}\times\bV\mathbbm{2} \sqcup_{\partial(\bH\mathbbm{2}\times\bV\mathbbm{2})}\bH\mathbbm{2}\times\bV\mathbbm{2}$};
\node[right of=1,xshift=3cm](2)        {$\bH\mathbbm{2}\times\bV\mathbbm{2}$};
\draw[->] (1) to  (2);
\end{tz}  

We have showed that the latching maps of the functor $D$ at the kinds $O,H,V,S,$ and $\dot{E}$, generate the cofibrations of the model structure in \cref{MS_equipments}, as they are exactly the set $\cI$ of \cref{def:cofib}. It remains to show that the remaining latching maps can be generated from these, which follows by inspection. 
\end{proof}

\begin{cor}\label{cor:Ddblcat_Reedy_cofibrant} The diagram $D_{\dblcat}$ is Reedy cofibrant, with the model structure on $\dblcat$ of \cref{MS_equipments}.
\end{cor}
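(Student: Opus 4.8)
The plan is to obtain this as an immediate consequence of \cref{lem:surviving_latching_maps} by feeding it into \cref{under_triangle_D_Reedy_cofibrant}. First I would recall what Reedy cofibrancy means here: the signature $\cL_\dblcat$ is an inverse category, so its opposite $\cL_\dblcat^\op$ is a direct category and hence a Reedy category in which every nonidentity map strictly raises degree. For a diagram on such a category the matching maps are isomorphisms, and Reedy cofibrancy of $D_\dblcat$ reduces to the single requirement that each latching map $\ell^K\colon L^K D_\dblcat\to D_\dblcat(K)$ be a cofibration in the model structure of \cref{MS_equipments}, i.e.\ that it lie in $\cof(\cI)=\Cof$.

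Next I would invoke \cref{lem:surviving_latching_maps}, which is precisely the engine of this corollary: it asserts that the latching maps of $D_\dblcat$ form a set of generating cofibrations for the weak factorization system $(\Cof,\Fib\cap\cW)$, and that the latching maps at the kinds $O,V,H,S,\dot{E}$ already coincide with the generating set $\cI$ of \cref{def:cofib}. Since $\Fib\cap\cW$ is exactly the class $\trfib$ of trivial fibrations, this is the hypothesis of the converse half of \cref{under_triangle_D_Reedy_cofibrant}; applying it yields that Condition $\bigtriangleup$ holds for $D_\dblcat$ together with the model structure of \cref{MS_equipments}. The forward half of \cref{under_triangle_D_Reedy_cofibrant} then immediately gives that $D_\dblcat$ is Reedy cofibrant, completing the argument.

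If one prefers to avoid routing through Condition $\bigtriangleup$, the same conclusion is available directly: by \cref{lem:surviving_latching_maps} the latching maps at $O,V,H,S,\dot{E}$ are the members of $\cI$ and hence cofibrations, while the remaining latching maps lie in $\cof(\cI)=\Cof$ by the final inspection step of that lemma; either way every latching map is a cofibration, which is exactly Reedy cofibrancy. I do not expect any genuine obstacle at this stage, since all the substantive work\textemdash the computation of each latching object as a colimit over the relevant ``fan down'' in $\cL_\dblcat$ and the identification of the resulting maps with the generators of \cref{def:cofib}\textemdash has already been discharged in \cref{lem:surviving_latching_maps}.
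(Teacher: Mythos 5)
Your argument is correct and follows the paper's own route exactly: the paper's proof is the one-line appeal to \cref{under_triangle_D_Reedy_cofibrant}, with \cref{lem:surviving_latching_maps} supplying the hypothesis that the latching maps of $D_{\dblcat}$ generate $(\Cof,\trfib)$, so that Condition $\bigtriangleup$ holds and Reedy cofibrancy follows. (Your parenthetical claim that the matching maps are isomorphisms over a direct category is not quite right\textemdash since $\cL_\dblcat$ is inverse, the matching objects of $D_{\dblcat}$ are terminal\textemdash but this plays no role, as Reedy cofibrancy only concerns the latching maps.)
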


\begin{proof}
It follows directly from \cref{under_triangle_D_Reedy_cofibrant}.
\end{proof}

\begin{rem}
    In the three examples worked here\textemdash categories, $2$-categories, and double categories\textemdash when we consider the weak factorization system considered $(\Cof,\trfib)$, we note that it is generated by the latching maps of the diagram $D$ for all the kinds on the ``fan down'' of the relational symbol encoding equality.
\end{rem}

We can now deduce the result we sought. Colloquially, this says that if $F\colon\bA\to\bB$ is double biequivalence between equipments, then the $\cL_\dblcat$-structures $M_{\bA}$ and $M_\bB$ satisfy the same formulae.

\begin{cor}\label{double_equiv_same_formulae}
    Let $F\colon\bA\to\bB$ be a double biequivalence between equipments, $\Gamma$ a context, and $\alpha\colon \Gamma\to M_{\bA}$ and $\beta\colon \Gamma\to M_\bB$ two interpretations compatible with $M_F\colon M_\bA\to M_\bB$. Then $M_{\bA}\vDash \phi[\alpha]$ if and only if $M_\bB\vDash\phi[\beta]$. 
\end{cor}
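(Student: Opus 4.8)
The plan is to recognize this corollary as a direct instance of the general machinery of \cref{sec:homotopy_meets_folds}, specialized to the signature $\cL_\dblcat$, the diagram $D_\dblcat$ of \cref{Ddblcat}, and the model structure on $\dblcat$ from \cref{MS_equipments}. The governing general result is \cref{thm:we_then_structures_verify_same_formulate}, which says that a weak equivalence between fibrant objects induces $\cL$-structures satisfying the same formulae—provided we are in the setting of the nerve-realization construction with Condition $\bigtriangleup$ in force. Thus the proof reduces to checking two things: that $F$ is a weak equivalence between fibrant objects, and that Condition $\bigtriangleup$ holds for $D_\dblcat$.

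First I would verify that $F$ is a weak equivalence between fibrant objects. Since $\bA$ and $\bB$ are equipments, they are naive fibrant by \cref{equip_naive_fibrant_objects}, hence fibrant objects in the model structure of \cref{MS_equipments}. By the final assertion of that theorem, the weak equivalences between equipments are precisely the double biequivalences; as $F$ is assumed to be a double biequivalence, it is therefore a weak equivalence between fibrant objects. Next I would establish Condition $\bigtriangleup$ for $D_\dblcat$. This is supplied by \cref{lem:surviving_latching_maps} together with the converse direction of \cref{under_triangle_D_Reedy_cofibrant}: the former identifies the latching maps of $D_\dblcat$ (at the kinds $O,V,H,S,\dot{E}$) with the generating cofibrations $\cI$ of \cref{def:cofib}, showing they generate $(\Cof,\Fib\cap\cW)$, and the latter then yields Condition $\bigtriangleup$. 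Equivalently one may cite \cref{cor:Ddblcat_Reedy_cofibrant}, which records that $D_\dblcat$ is Reedy cofibrant for this model structure.

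With both hypotheses verified, the conclusion follows by applying \cref{thm:we_then_structures_verify_same_formulate} with $\cM=\dblcat$, $X=\bA$, $Y=\bB$, and $f=F$, using the given context $\Gamma$ and the compatible interpretations $\alpha$ and $\beta$. The main work is not located in the corollary itself but in the preparatory results that feed into it—especially the latching-object computation of \cref{lem:surviving_latching_maps}, where one must present the boundary $\partial\cL_K$ of each representable and compute the weighted colimit $\partial\cL_K\ast_{\cL^\op}D_\dblcat$ to recover the maps of \cref{def:cofib}. Once that bridge between the \folds{} boundary inclusions and the generating cofibrations is in place, the corollary is a purely formal consequence, so I expect no genuine obstacle at this final step beyond correctly matching the hypotheses of the general theorem to the present situation.
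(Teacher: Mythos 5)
Your proposal is correct and follows exactly the route the paper takes: the paper's own proof is a one-line application of \cref{thm:we_then_structures_verify_same_formulate} with $\cM=\dblcat$ equipped with the model structure of \cref{MS_equipments}. You simply make explicit the hypothesis-checking (fibrancy of equipments, double biequivalences as weak equivalences, and Condition $\bigtriangleup$ via \cref{lem:surviving_latching_maps}) that the paper leaves implicit in the surrounding results.
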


\begin{proof}
This is \cref{thm:we_then_structures_verify_same_formulate} where $\cM=\dblcat$ with the model structure of \cref{MS_equipments}.
\end{proof}

Although \cref{double_equiv_same_formulae} is the result we were after. It is worth noticing that by looking at the proof of \cref{thm:weak_equiv_to_Lequiv} we obtain as an intermediate step a double categorical counterpart of \cite[Lemma 11.2.24]{elements}. The proof given there applies almost verbatim in this case too as the result is at its core just facts about Reedy weak factorization systems, and we include it. It is also true that it is a special case of a more general result in \cite{HenryBardomiano}.

\begin{lem}\label{trivfib_to_fwsurjection}
    Let $\bA$ and $\bB$ be equipments, and $F\colon\bA\fwto\bB$ a double functor that is surjective on objects, full on horizontal and vertical morphisms, and fully faithful on squares. Then $F$ induces a fiberwise surjection $M_\bA\fto M_\bB$ of $\cL_{\dblcat}$-structures.
\end{lem}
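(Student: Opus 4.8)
The plan is to obtain the statement formally from the machinery already in place, rather than verifying fiberwise surjectivity kind by kind. First I would record that the hypotheses hold to invoke Condition $\bigtriangleup$ for $\dblcat$: by \cref{lem:surviving_latching_maps} the latching maps of $D_\dblcat$ generate the weak factorization system $(\Cof,\Fib\cap\cW)=(\Cof,\trfib)$ of the model structure of \cref{MS_equipments}, so the converse direction of \cref{under_triangle_D_Reedy_cofibrant} guarantees that $(\Cof,\trfib)$ on $\dblcat$ is right-lifted along $M\_=\hom(D_\dblcat-,-)$ from the Reedy weak factorization system $(\mono[\cL_\dblcat],\epi[\cL_\dblcat])$ on $\Set^{\cL_\dblcat}$. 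Concretely this means $\trfib=(M\_)^{-1}(\epi[\cL_\dblcat])$, i.e.\ a double functor is a trivial fibration exactly when the induced map of $\cL_\dblcat$-structures is a Reedy epimorphism. Then I would observe that the properties assumed of $F$ — surjective on objects, full on horizontal and vertical morphisms, fully faithful on squares — are precisely the characterization of trivial fibrations in \cref{prop:trivfib}, so $F\in\trfib$ and hence $M_F\in\epi[\cL_\dblcat]$. Finally \cref{lem:Reedy_mono_epi} identifies the right class $\epi[\cL_\dblcat]$ with the fiberwise surjective natural transformations, which yields that $M_\bA\to M_\bB$ is a fiberwise surjection.

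The only genuine content is the translation encoded in the phrase ``right-lifted'', and this is the step I would unwind carefully. Fix a kind $K$. Using that $M_\bA(K)=\hom(D_\dblcat(K),\bA)$, that $\cL_\dblcat$ is inverse (so the latching objects of the $\cL$-structures vanish and only matching objects intervene), and the two-variable adjunction of \cref{rem:weighted_limitcolimit_hom_2varadj} together with \cref{def:latching_matching_objects} and \cref{latching_map_realization}, one identifies $M^K M_\bA\cong\hom(L^K D_\dblcat,\bA)$. Under this identification the comparison map $\widehat m^K M_F$ into the matching pullback is exactly the comparison map of hom-sets for the lifting problem of $F$ against the latching map $\ell^K\colon L^K D_\dblcat\to D_\dblcat(K)$, so surjectivity of $\widehat m^K M_F$ is equivalent to $F$ having the right lifting property against $\ell^K$. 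Since $F$ is a trivial fibration and the full collection of latching maps generates $(\Cof,\trfib)$ by \cref{lem:surviving_latching_maps}, $F$ lifts against every $\ell^K$, giving surjectivity at all kinds at once.

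I expect the main thing to get right — rather than any real difficulty — to be the bookkeeping of variances in that adjunction identification, exactly as in \cite[Lemma 11.2.24]{elements}, whose argument transports here essentially verbatim. One illuminating subtlety worth flagging is the behaviour at the relation symbols $\dot E,\dot I_\ver,\dot I_\hor,\dot V_\circ,\dot H_\circ$: there the latching maps $\ell^K$ are epimorphisms rather than cofibrations, so the right lifting property against them is not automatic from closure properties but is precisely \emph{reflection} of the corresponding square-level relation, which is witnessed by $F$ being fully faithful on squares. The abstract argument of the first paragraph subsumes this case uniformly, so I would present that as the proof and keep the per-kind discussion only as motivation.
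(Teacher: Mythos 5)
Your proposal is correct and takes essentially the same route as the paper: where the paper packages the key step as ``$\hom(D_{\dblcat}-,-)$ is a right Leibniz bifunctor applied to the Reedy cofibrant $D_{\dblcat}$,'' you package it as ``the latching maps of $D_{\dblcat}$ generate $(\Cof,\trfib)$, so this weak factorization system is right-lifted from $(\mono[\cL],\epi[\cL])$ and hence $\trfib=(M\_)^{-1}(\epi[\cL])$,'' which is the same two-variable-adjunction argument, and your second paragraph unwinds exactly the identification the paper leaves implicit. One correction to your motivational aside only: the latching maps at the relation symbols \emph{are} cofibrations (the one at $\dot E$ is literally a generating cofibration, and Reedy cofibrancy of $D_{\dblcat}$, \cref{cor:Ddblcat_Reedy_cofibrant}, requires all of them to be), so lifting against them is not a case falling outside the generated class.
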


\begin{proof}
We have shown that there is a weak factorization system (cofibrations, trivial fibrations) on $\dblcat$ whose trivial fibrations are the double functors that are surjective on objects, full on both horizontal and vertical morphisms, and fully faithful on squares\textemdash as in the statement. When we input the diagram $D_{\dblcat}\colon \cL_{\dblcat}^\op\to \Set$ built in \cref{Ddblcat} to the hom bifunctor 

\begin{tz}
\node[](1) {$(\dblcat^{\cL^\op_{\dblcat}})^\op\times\dblcat$};
\node[right of=1,xshift=3.5cm](2)         {$\Set^{\cL_{\dblcat}}$};

\draw[->] (1) to node[above,la]{$\mathrm{hom}$} (2);
\end{tz}
the resulting functor $M\_\coloneqq \mathrm{hom}(D_{\dblcat}-,-)$ gives us a map between $\cL_\dblcat$-structures.

Now, by \cref{rem:weighted_limitcolimit_hom_2varadj} we know that the bifunctor $\mathrm{hom}$ above is a right adjoint in a two-variable adjunction involving the (unenriched) weighted colimit bifunctor. Since \cref{lem:weightedlimcolim_Leibniz} guarantees that the weighted limit and colimit are Leibniz bifunctors, we use the equivalences of conditions in \cref{lem:Leibniz_two_variable_adj} to conclude that $\mathrm{hom}$ is a right Leibniz bifunctor with respect to the Reedy weak factorization systems built from any weak factorization system on $\dblcat$ and the (monomorphism, epimorphism) weak factorization system on $\Set$. 

Consequently, since $D_{\dblcat}$ is Reedy cofibrant, the functor $M\_\colon\dblcat\to\Set^{\dblcat}$ carries trivial fibrations to Reedy epimorphisms, which are exactly the fiberwise surjections.
\end{proof}

\bibliographystyle{alpha}
\bibliography{references}

\end{document}